\documentclass[10pt, reqno]{amsart}

\usepackage{color}

\evensidemargin
\oddsidemargin 
\makeatletter\@addtoreset{equation}{section}\makeatother

\usepackage{amsmath, amsthm, latexsym, amssymb, graphics, bm, color}

\newcommand{\e}{\varepsilon}

\renewcommand{\phi}{\varphi}

\newcommand{\ssup}[1] {{\scriptscriptstyle{({#1}})}}
\newcommand{\PP}{\mathcal{P}_t^{\rm min}}

\newcommand{\Pt}{\text{\rm P}_{\mathcal{G}_t}}
\newcommand{\Et}{\text{\rm E}_{\mathcal{G}_t}}

\newcommand{\Pp}{\text{\rm P}_{\star}}
\newcommand{\Ep}{\text{\rm E}_{\star}}

\newcommand{\R}{\mathbb R}
\newcommand{\Z}{\mathbb Z}

\newcommand{\N}{\mathbb N}

\newcommand{\E}{\mathbb E}

\newtheorem{theorem}{Theorem}

\newtheorem{lemma}{Lemma}

\newtheorem{prop}{Proposition}

\def\1{{\mathchoice {1\mskip-4mu\mathrm l}      
{1\mskip-4mu\mathrm l}
{1\mskip-4.5mu\mathrm l} {1\mskip-5mu\mathrm l}}}

\renewcommand{\subsection}{\secdef \subsct\sbsect}
\newcommand{\subsct}[2][default]{\refstepcounter{subsection}
\vspace{0.15cm}
{\flushleft\bf \arabic{section}.\arabic{subsection}~\bf #1  }
\nopagebreak\nopagebreak}
\newcommand{\sbsect}[1]{\vspace{0.1cm}\noindent
{\bf #1}\vspace{0.1cm}}

\newcounter{remnr}

\newenvironment{remark}{\refstepcounter{remnr}
{\sf Remark~\arabic{remnr}.\ }\nopagebreak  }%
{\nopagebreak {\hfill{$\diamond$}}\\ }

\begin{document}

\title[Full Asymptotics for the heavy-tailed Pareto PAM]{Full asymptotics for the parabolic Anderson model with Pareto potential in the heaviest-tailed regime}

\author[Nadia Sidorova]{}

\maketitle

\centerline{\sc Nadia Sidorova\footnote{Department of Mathematics, University College London, Gower Street, London WC1 E6BT, UK, {\tt n.sidorova@ucl.ac.uk}.
} }

\vspace{0.4cm}

\vspace{0.4cm}

\begin{quote}
{\small {\bf Abstract:} The parabolic Anderson model is the Cauchy problem for the heat equation on the integer lattice with a random potential $\xi$. We consider the case where $\{\xi(z): z\in \Z^d\}$ are independent and identically distributed Pareto random variables with parameter~$\alpha$, and assume that the solution is initially localised at the origin. We establish the full asymptotic behaviour of the total mass of the solution as time tends to infinity in the heaviest-tailed regime $\alpha\in(d,2d)$. In particular, we find qualitatively different behaviour in dimension $d=1$ and in dimensions $d\ge 2$.
}

\end{quote}
\vspace{5ex}

{\small {\bf AMS Subject Classification:} 
Primary 60H25
Secondary 82C44, 60K37.

{\bf Keywords:} parabolic Anderson model, random potential, Pareto distribution, heavy tail, localisation,  Feynman--Kac formula}
\vspace{4ex}





 \section{Introduction}
 
 Given a random potential $\xi: \Z^d\to \R$, the parabolic Anderson model is the Cauchy problem 
 with localised initial condition 
 
 \begin{equation}
 \label{pam}
\begin{aligned}
\partial_t u(t, z) &= \Delta u(t, z)+\xi(z)u(t, z),
 &&    (t, z)\in (0,\infty) \times \mathbb Z^d,\\
u(0,z) &= \mathbf 1_{\{0\}}(z),
 &&   z\in \mathbb Z^d,
\end{aligned}
\end{equation}
 where $\Delta$ is the discrete Laplacian acting on functions $f: \Z^d\to \R$ by 
 \begin{align*}
 (\Delta f)(z)=\sum_{|y-z|=1}(f(y)-f(z)), \qquad z\in \Z^d,
 \end{align*}
 with $|\cdot|$ denoting the standard $\ell_1$ distance. The parabolic Anderson model has its origins in the study of diffusion and reaction in disordered media and is closely related to the spectral theory of the Anderson Hamiltonian $\Delta+\xi$. 
Its most characteristic feature is intermittency, whereby the solution develops a small number of 
exceptionally high peaks carrying most of its mass. 
 The mathematical study of intermittency for the parabolic Anderson model was initiated by the seminal work of G\"artner and Molchanov~\cite{gm1} and has since attracted considerable attention in probability and mathematical physics. We refer to the monograph~\cite{k} for a comprehensive account of the model, its history and its connections with random walks in random potentials, branching processes, Anderson localisation and extreme value theory.
 
 \smallskip
 
 We assume that $\{\xi(z): z\in \Z^d\}$ are independent Pareto random variables with parameter $\alpha$, 
 that is, their distribution function is given by
 \begin{align*}
 F(x)=1-x^{-\alpha}, \qquad x\ge 1.
 \end{align*}
 We denote by $\text{P}$ and $\text{E}$ the corresponding probability and expectation and let 
 $\mu=\text{E}[\xi (z)]=\frac{\alpha}{\alpha-1}$. 
\smallskip

It follows from~\cite{gm1} that the solution of the parabolic Anderson model~\eqref{pam} exists provided 
that $\alpha>d$, and is given by the Feynman--Kac formula 
\begin{align}
\label{fc}
u(t,z)=\E\Big[\exp\Big\{\int_0^t \xi(X_s)ds\Big\}\1_{\{X_t=z\}}\Big], 
\qquad (t, z)\in (0,\infty) \times \mathbb Z^d,
\end{align}
where $(X_t)_{t\ge 0}$ is a continuous-time simple random walk on $\Z^d$ 
with generator $\Delta$
started at the origin, 
and $\E$ is the corresponding expectation. We denote by 
 \begin{align}
 \label{fct}
 U(t)=\sum_{z\in \Z^d}u(t, z)
 =\E\Big[\exp\Big\{\int_0^t \xi(X_s)ds\Big\}\Big], \qquad t>0, 
 \end{align}
 the total mass of the solution. 
\smallskip

One of the central questions in the study of the parabolic Anderson model  is the large-time behaviour of the total mass $U(t)$ or, equivalently, of 
\begin{align*}
L(t)=\frac 1 t \log U(t).
\end{align*}
In the case of Pareto potential, both the almost sure and weak asymptotics of $L(t)$ were studied in~\cite{hms}. Owing to the heavy-tailed nature of the Pareto distribution, already the leading term of the logarithmic weak asymptotics turned out to be random. More precisely, it was shown that
\begin{align}
\label{a}
\frac{L(t)}{a_t}\Rightarrow \Upsilon \qquad \text{with}\quad a_t=\Big(\frac{t}{\log t}\Big)^{\frac{d}{\alpha-d}},
\end{align}
where $\Upsilon$ is a random variable with an explicitly known positive density on $(0,\infty)$.
\smallskip


It was then
proved in~\cite{klms} that the parabolic Anderson model with Pareto potential exhibits 
complete localisation. Specifically, for each $t$ there exists a single site $Z_t^*\in \Z^d$ 
that carries an overwhelming proportion of the total mass of the solution, that is,
\begin{align}
\label{comploc}
\frac{u(t, Z_t^*)}{U(t)}\to 1\qquad \text{in probability}.
\end{align}
Moreover, it was shown in~\cite{klms} that localisation holds in the stronger sense of almost-sure convergence, for which two sites are necessary to accommodate transitions between successive localisation sites.
More precisely, for each $t$ there exist two sites $Z_t^*, \hat Z_t^*\in \Z^d$ such that
\begin{align}
\label{comploc2}
\frac{u(t, Z_t^*)+u(t,\hat Z_t^*)}{U(t)}\to 1\qquad \text{almost surely}.
\end{align}
The one-point localisation result~\eqref{comploc} first appeared in the earlier preprint~\cite{kms}; since it was subsequently included in~\cite{klms}, we cite the latter throughout the paper. Related heavy-tailed localisation phenomena have also been studied for the parabolic Anderson model on random trees in~\cite{ap} and for branching random walks in Pareto environments in~\cite{or1,or2}.
In lighter-tailed regimes, one-point localisation was later established for Weibull potentials in~\cite{fm, st}. For double-exponential potentials, point localisation no longer holds; instead, the mass concentrates in a bounded neighbourhood of a single site~\cite{bks}.
\smallskip

The localisation picture~\eqref{comploc} and~\eqref{comploc2} also led to a much more precise description of the asymptotic behaviour of the total mass $U(t)$. It was shown in~\cite{klms} that the localisation sites $Z_t^*$ and $\hat Z_t^*$ 
are the first and second maximisers of the functional
\begin{align}
\label{phi}
\Psi_t^* (z)=\Big[\xi(z)-\frac{|z|}{t}\log \xi(z)+\frac{\log N(z)}{t}\Big]\1_{\{t\xi(z)\ge |z|\}}, \qquad (t, z)\in (0,\infty) \times \Z^d,
\end{align}
where $N(z)$ denotes the number of paths of minimal length $|z|$ from the origin to $z$ in $\Z^d$. This reflects the trade-off faced by the random walk $(X_t)$ in the Feynman--Kac formula~\eqref{fc}: spending time at sites with large potential yields a greater exponential reward, but reaching such sites and staying there incurs an entropic cost. It was also shown that 
\begin{align}
\label{lt}
L(t)=(1+o(1))\Psi_t^*(Z_t^*)\qquad\text{in probability},
\end{align}
thus giving a much sharper description of the leading term of the asymptotics.
The joint scaling limit of the localisation site $Z_t^*$ and 
of the term $\Psi_t^*(Z_t^*)$ describing $L(t)$
was found to be 
\begin{align}
\label{r}
\Big(\frac{Z_t^*}{r_t}, \frac{\Psi_t^*(Z_t^*)}{a_t}\Big)
\Rightarrow (X, \Upsilon), 
\end{align}
where 
\begin{align*}
r_t=\Big(\frac{t}{\log t}\Big)^{\frac{\alpha}{\alpha-d}}
\end{align*}
and  $(X, \Upsilon)$ is a random variable with an explicitly known positive density on $\R^d\times (0,\infty)$.
Note that the relation 
\begin{align}
\label{ra}
a_t=r_t^{d/\alpha}
\end{align}
between $r_t$ and $a_t$
reflects the fact that the maximum of $n$ i.i.d.\ Pareto random variables with parameter $\alpha$ is of order $n^{1/\alpha}$, while the number of lattice points in a $d$-dimensional ball grows proportionally to the $d$th power of its radius.
\smallskip

While the methods developed to prove~\eqref{lt} and~\eqref{r} provided a good description of the leading term of $L(t)$, they left the remainder essentially untouched, determining it only up to an error of order $o(a_t)$.
The aim of this paper is to understand this remainder and thereby derive the full asymptotic expansion of the total mass in the heaviest-tailed regime $\alpha\in(d,2d)$.
Recall that the case $\alpha\le d$ is ruled out because the solution explodes.
The case $\alpha\geq 2d$ presents additional difficulties, discussed below, and lies beyond the scope of the present methods.
\smallskip

The first step in this direction was taken in~\cite{mps1,mps2} for the partially symmetric parabolic Anderson model, where instead of being fully i.i.d., the potential $\xi$ is equal, with probability $p$, at pairs of sites symmetric with respect to the origin, and independent otherwise. The analysis was restricted to the one-dimensional setting, where a phase transition at $\alpha=2$ was identified. It was shown that for $\alpha\ge2$, the solution remains localised at a single site despite the partial symmetry, whereas for $\alpha\in(1,2)$ the symmetry manifests itself, with two symmetric sites carrying a non-negligible proportion of the total mass. Distinguishing between these two sites requires comparing their masses, and since the leading-order asymptotics are identical, this comparison necessarily relies on understanding the lower-order terms.
\smallskip

Our first result, Theorem~\ref{th_1}, adapts some of the ideas developed in~\cite{mps1,mps2} to derive the full asymptotic expansion of $U(t)$ in the one-dimensional heaviest-tailed regime $\alpha\in(1,2)$. 
\smallskip

\begin{theorem} 
\label{th_1}
Let $d=1$ and $\alpha\in (1,2)$. As $t\to\infty$, 
\begin{align}
\label{asymp}
U(t)=\exp\Big\{t\Psi_t^*(Z_t^*)-2t+\mu\frac{|Z_t^*|}{\xi(Z_t^*)} + 
\chi_t\Big\},
\end{align}
with
\begin{align}
\label{conv4}
\mathcal{L}\Big(\frac{Z^*_t}{r_t}, \frac{\Psi_t^*(Z_t^*)}{a_t},  \frac{\xi(Z^*_t)}{a_t}, \chi_t\Big)\Rightarrow 
\mathcal{L}(X, \Upsilon, \Upsilon + q |X|, \chi),
\end{align}
where 
\begin{align*}
q=\frac{d}{\alpha-d}
\end{align*}
and 
the triple $(X, \Upsilon, \chi)$ has a positive density on $\R\times (0,\infty)\times \R$. 
\end{theorem}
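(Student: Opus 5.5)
We prove Theorem~\ref{th_1} by expanding the Feynman--Kac formula~\eqref{fct} around paths that travel to $Z_t^*$ and stay there. Write $z=Z_t^*$, $n=|z|$, and let $\mathcal P$ be the set of nearest-neighbour paths $0=y_0,y_1,\dots,y_k=z$. Conditioning on the jump chain and integrating out the holding times gives, for each path, an explicit Laplace-type integral. If the walk ends at $z$ after visiting $y_0,\dots,y_{n-1}$ once each along the direct path, then
\begin{align*}
\E\Big[e^{\int_0^t\xi(X_s)ds}\1_{\{\text{direct path, stay at } z\}}\Big]
= e^{t(\xi(z)-2)}\int_{\sum s_i\le t}\prod_{i=0}^{n-1}e^{-(\xi(z)-\xi(y_i))s_i}\,ds .
\end{align*}
Since $\xi(z)\gg t^{-1}n$ is huge, this is asymptotically
\[
e^{t(\xi(z)-2)}\prod_{i<n}\frac{1}{\xi(z)-\xi(y_i)}
= e^{t(\xi(z)-2)}\,\xi(z)^{-n}\exp\Big\{-\sum_{i<n}\log\big(1-\xi(y_i)/\xi(z)\big)\Big\}.
\]
In $d=1$ we have $N(z)=1$, so $t\Psi_t^*(z)=t\xi(z)-n\log\xi(z)$. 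Expanding the logarithm, the sum equals $\sum_i\xi(y_i)/\xi(z)+O\big(\sum_i\xi(y_i)^2/\xi(z)^2\big)$.

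The first key step is a law-of-large-numbers type decomposition: $\sum_{i<n}\xi(y_i)=\mu n+S_n$ with $S_n=\sum(\xi(y_i)-\mu)$. Because $\alpha\in(1,2)$, $S_n$ is of order $n^{1/\alpha}$, in the stable domain. Since $n\sim r_t$ and $\xi(z)\sim a_t=r_t^{1/\alpha}$, the ratio $S_n/\xi(z)$ is of order one and random. This produces the term $\mu|Z_t^*|/\xi(Z_t^*)$ in~\eqref{asymp}, which is large, of order $r_t^{1-1/\alpha}$. The fluctuation $S_n/\xi(z)$ is an $O(1)$ random quantity that goes into $\chi_t$. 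The quadratic terms $\sum\xi(y_i)^2/\xi(z)^2$ are of order $(r_t^{2/\alpha})/a_t^2=O(1)$ when $\alpha<2$, since the sum of squares is $2/\alpha$-stable. The same holds for all higher powers $\sum_i(\xi(y_i)/\xi(z))^k$. Hence the whole series $-\sum_i\log(1-\xi(y_i)/\xi(z))-\mu n/\xi(z)$ converges jointly in law to a functional of a Poisson point process. This uses the point-process convergence of $\{(y/r_t,\xi(y)/a_t)\}$, which underlies~\eqref{r}. The restriction $\alpha<2$ is exactly what makes the remainder $O(1)$. This shows where $\alpha\ge 2d$ would fail.

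The second step accounts for all other paths: detours, backtracks, excursions beyond $z$, and paths localising elsewhere. Along the direct path, a single extra back-and-forth step at a site $y$ contributes a factor of about $\xi(z)^{-2}$ times a combinatorial count of order $n$ or $t$. Summing over such decorations gives a multiplicative factor $1+o(1)$, because $n/\xi(z)^2\sim r_t/a_t^2=r_t^{1-2/\alpha}\to0$ for $\alpha<2$. Paths not ending near $z$ are negligible by the localisation~\eqref{comploc} together with the gap estimates from~\cite{klms}. We must also check that the exact $(1+o(1))$ corrections from the finite-time integral are negligible. These include the restriction $\sum s_i\le t$ and the approximation of $e^{-(\xi(z)-\xi(y_i))s}$. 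They vanish because $t(\xi(z)-\max_{i<n}\xi(y_i))\to\infty$, which holds since $z$ is the maximiser and the gap is of order $a_t$.

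The third step is to identify the joint limit. The limit $\chi$ has the form $\chi=\lim\big[\sum_i(-\log(1-\xi(y_i)/\xi(z))-\mu/\xi(z))\big]$, a compensated Poisson integral. It is a continuous functional of the Poisson process, with levels rescaled by $\Upsilon+q|X|$. From $\Psi_t^*=\xi-\frac{|z|}t\log\xi$ and $\log\xi(z)\sim q^{-1}\cdot\frac{d}{\alpha-d}\log t\cdot(\ldots)$ we get $\frac{|z|\log\xi(z)}{ta_t}\to q|X|$. This explains $\xi(Z_t^*)/a_t\to\Upsilon+q|X|$. The density of $(X,\Upsilon,\chi)$ is positive because, conditionally on $(X,\Upsilon)$, the process of points between $0$ and $X$ is Poisson restricted below the level and independent, so $\chi$ is infinitely divisible with nondegenerate Lévy measure. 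We expect the main obstacle to be the uniform control of the compensated sum near small values of $\xi(y_i)$, together with the joint convergence with $(X,\Upsilon)$. The sum diverges without compensation, so we need to truncate at level $\varepsilon a_t$, use stable-law variance bounds below the cutoff, and use point-process continuity above it.
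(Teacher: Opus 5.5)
Your outline is sound and has the same skeleton as the paper. You reduce $U(t)$ to the unique shortest path to $Z_t^*$ and write its weight as $\xi(z)^{-|z|}\exp\{-\sum_i\log(1-\xi(y_i)/\xi(z))\}$. You then peel off $\mu|z|/\xi(z)$ and show that decorated paths cost only a factor $1+o(1)$, since $|Z_t|/\xi(Z_t)^2\to0$ and $t/\xi(Z_t)\to0$ for $\alpha<2$.

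The genuine difference is in how you identify the fluctuation. The paper conditions on $\mathcal{G}_t=\sigma(Z_t,\xi(Z_t))$, with $Z_t$ the maximiser of the simpler $\Psi_t$. Under this conditioning the values along the path are independent Pareto variables truncated at the explicit sloped thresholds $\rho_{t,z}/\beta_t$. The paper then computes the conditional characteristic function exactly (Proposition~\ref{p_psi}). Because $|(1-y)^{-is}|=1$, the logarithmic singularity near the maximiser is harmless. The compensation of small values comes from a second-order Taylor expansion that is integrable precisely because $\alpha<2$. Joint convergence with $(X_t,Y_t)$ then follows by integrating $\psi_t$ against their law.

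Your truncation scheme (continuous mapping above $\varepsilon a_t$, an $L^2$ bound of order $\varepsilon^{2-\alpha}$ below) also works. Its merit is that it exhibits $\chi$ pathwise as a functional of the same Poisson process that yields $(X,\Upsilon)$. At finite $t$, however, it needs a conditioning you only state in the limit: the paper's $\mathcal{G}_t$, or the configuration of sites with $\xi>\varepsilon a_t$, to decouple the small values from $(Z_t,\xi(Z_t))$. It also needs the fact that the limit a.s.\ has no points near $(X,Y)$. Note that the limiting restriction is to the sloped region $\{v-q|u|<Y-q|X|\}$, not a flat level; this is what makes the process time-inhomogeneous (the paper's $D_{a,b}$).

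For the finite-time correction you use Markov's inequality for a sum of independent exponentials. The paper instead uses the exact identity of~\cite[Lemma 3.8]{mps1} and absorbs the subtracted terms into $U(t)-U_0(t)$. Your route is more elementary, but the condition that matters is $\sum_i(\xi(z)-\xi(y_i))^{-1}=o(t)$, not $t(\xi(z)-\max_i\xi(y_i))\to\infty$. The latter already follows from maximality alone, which gives a neighbouring gap only of order $\log t/t$. The former requires a high-probability gap bound such as $\mathcal{E}_t^{\ssup 2}$.

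Smaller points:
\begin{itemize}
\item For the decoration sum you need the short-trajectory localisation~\eqref{u0u} rather than~\eqref{comploc}. That way only sites with $|y|<R_t$ are visited, and there $\xi(z)-\xi(y)\ge f_t\xi(z)$.
\item A return to $z$ costs $t/\xi(z)$, not $t/\xi(z)^2$. This is still $o(1)$ because $a_t\gg t$ when $\alpha<2$.
\item $\log\xi(z)\sim q\log t$, whereas your prefactor $q^{-1}\frac{d}{\alpha-d}$ equals $1$. Your conclusion $|z|\log\xi(z)/(ta_t)\to q|X|$ is nevertheless right.
\item A nondegenerate L\'evy measure alone does not give a positive density, since compound Poisson laws have atoms. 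What does it is the infinite mass $\asymp v^{-\alpha-1}dv$ near $0$ with $\alpha\in(1,2)$; the paper also passes over this point quickly.
\end{itemize}
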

\smallskip

\begin{remark} 
We know from~\eqref{r} that the first two coordinates in~\eqref{conv4} converge and that $(X,\Upsilon)$ has a positive density. Extending this to the third coordinate is simple, due to the relationship between $\Psi_t^*$ and $\xi$.
The new content of~\eqref{conv4} is the identification of the finite-order random term $\chi_t$. In the proof of Theorem~\ref{th_1}, we describe the limiting distribution $\chi$, conditionally on $(X,\Upsilon)$, in terms of a time-inhomogeneous compensated Poisson process.\end{remark}


The one-dimensional techniques, however, do not extend to higher dimensions because of the fundamentally different structure of the path space. In dimension one, there is a unique shortest path connecting the origin to the localisation site $Z_t^*$. In contrast, in higher dimensions there are many such paths, so the central limit theorem arguments used in dimension one no longer apply. Instead, the problem becomes one of analysing a heavy-tailed random polymer; see, for example,~\cite{al, nikos}.
\smallskip

Our second and main result, Theorem~\ref{th_d}, develops a new approach to obtain the full asymptotic expansion of $U(t)$ in arbitrary dimension $d\ge 2$, in the regime $\alpha\in(d,2d)$.

\begin{theorem} 
\label{th_d}
Let $d\ge 2$
and $\alpha \in (d, 2d)$. As $t\to \infty$
\begin{align}
\label{asymp_d}
U(t)=\exp\Big\{t\Psi_t^*(Z_t^*)-2dt+\mu\frac{|Z_t^*|}{\xi(Z_t^*)} + o(1)\Big\}
\end{align}
in probability. 
\end{theorem}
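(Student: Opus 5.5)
We would start from the Feynman--Kac formula~\eqref{fct} and reduce $U(t)$, up to a factor $1+o(1)$ in probability, to the contribution of paths that travel to $Z=Z_t^*$ along a shortest path, with each intermediate site visited once and for a short time, and then remain at $Z$ until time $t$. The complete localisation results~\eqref{comploc} and~\eqref{lt}, together with the analysis in~\cite{klms}, show that the other paths contribute negligibly: those ending at or going through other sites, those making detours, and those with excursions away from $Z$ other than $O(1)$ many short back-and-forth jumps. The last kind only produce the factor $e^{-2dt}$ corrections, and these must be tracked exactly.

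For a fixed shortest path $y_0=0,y_1,\dots,y_n=Z$ with $n=|Z|$, integrating over the holding times $s_0,\dots,s_{n-1}$ of the intermediate sites gives an explicit contribution. Each jump has rate $1$, each site is left at total rate $2d$, and the time spent at $Z$ is $t-\sum s_i$, so the contribution is
\begin{align*}
e^{t(\xi(Z)-2d)}\prod_{i=0}^{n-1}\int_0^\infty e^{-s(\xi(Z)-\xi(y_i))}\,ds
=e^{t(\xi(Z)-2d)}\prod_{i=0}^{n-1}\frac{1}{\xi(Z)-\xi(y_i)}.
\end{align*}
The upper limit $t$ of the time integrals is replaced by $\infty$ at negligible cost. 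Including the loops at $Z$, which contribute the eigenvalue correction, upgrades $\xi(Z)-2d$ by $O(1/\xi(Z))$ per neighbour, giving an $o(1)$ total since $t/\xi(Z)^2\to0$ would require checking; in fact the resulting correction is of order $t/a_t\to0$ in the regime $\alpha>d$ only if $a_t\gg t$, so this step needs care. Write
\begin{align*}
\prod_i\frac{1}{\xi(Z)-\xi(y_i)}=\xi(Z)^{-n}\exp\Big\{-\sum_i\log\Big(1-\frac{\xi(y_i)}{\xi(Z)}\Big)\Big\}.
\end{align*}
Expanding the logarithm, the first-order term is $\sum_i \xi(y_i)/\xi(Z)$. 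Summing over the $N(Z)$ paths and comparing with $t\Psi_t^*(Z)=t\xi(Z)-n\log\xi(Z)+\log N(Z)$, we obtain
\begin{align*}
U(t)=e^{t\Psi_t^*(Z)-2dt}\cdot\frac{1}{N(Z)}\sum_{\text{paths}}\exp\Big\{\sum_i\frac{\xi(y_i)}{\xi(Z)}+R\Big\}\,(1+o(1)),
\end{align*}
where $R$ collects the higher-order terms $\sum_k k^{-1}(\xi(y_i)/\xi(Z))^k$.

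The core of the proof is then a law-of-large-numbers statement for this random polymer average. We must show that, in probability,
\begin{align*}
\frac{1}{N(Z)}\sum_{\text{paths}}\exp\Big\{\frac{1}{\xi(Z)}\sum_i\xi(y_i)+R\Big\}=\exp\Big\{\mu\frac{n}{\xi(Z)}+o(1)\Big\}.
\end{align*}
Since $n\asymp r_t$, $\xi(Z)\asymp a_t$ and $\alpha>d$, we have $n/\xi(Z)\to\infty$ like $r_t^{1-d/\alpha}$. It is therefore enough to centre: put $\bar\xi=\xi-\mu$, truncate at level $\e\xi(Z)$ (sites above it lie off typical paths, or are excluded by the choice of $Z$ as the maximiser), and show that the normalised partition function $W=N(Z)^{-1}\sum_{\text{paths}}\exp\{\xi(Z)^{-1}\sum_i\bar\xi(y_i)\}$ tends to $1$. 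We plan to use a second moment computation. The first moment is $\exp\{n\,\mathrm{Var}/\xi(Z)^2\}$ after truncation, and with Pareto tails the truncated variance is of order $(\e a_t)^{2-\alpha}$, giving $n a_t^{-\alpha}\to0$ in view of~\eqref{ra}. The second moment involves the overlap of two independent uniform shortest paths, and $d\ge2$ makes the expected overlap exponentially summable (transience of the intersection). This is exactly where $d=1$ fails: there the overlap is total and a nondegenerate fluctuation $\chi$ remains. The same truncated-moment bounds show $R=o(1)$ precisely when $\alpha<2d$, since $\sum_i\xi(y_i)^2/\xi(Z)^2$ is controlled by the heavy-tail sum $n^{2/\alpha}/a_t^2$. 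We must also handle conditioning: $Z$ is chosen depending on $\xi$. For this we plan to condition on $(Z,\xi(Z))$ and use the fact that the remaining field is nearly i.i.d., conditioned only to lie below the threshold making $Z$ the maximiser.

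We expect the main obstacle to be this polymer second-moment estimate with conditioning and heavy-tailed truncation. In particular, we must control rare moderately large sites near many paths, and show that the truncation threshold interacts correctly with the $\alpha<2d$ condition.
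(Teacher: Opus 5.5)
There is a genuine gap in the reduction to shortest paths. You attribute to \cite{klms} the negligibility of detours and of returns to $Z$. The only input available from there is $U_0(t)/U(t)\to1$, i.e.\ restriction to paths ending at $Z_t$ with fewer than $|Z_t|(1+f_t)$ jumps, and this does not distinguish $\alpha<2d$ from $\alpha\ge 2d$. The claim you need is that paths of length between $|Z_t|$ and $|Z_t|(1+f_t)$ change $U(t)$ only by a factor $1+o(1)$, and this fails for $\alpha\ge 2d$. There are roughly $N(Z_t)(c|Z_t|)^w/w!$ paths with $w$ extra back-and-forth steps, each penalised by $\xi(Z_t)^{-2w}$. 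In addition, $m$ returns to $Z_t$ bring a factor $t^m/m!$ against $\xi(Z_t)^{-s}$ with $s\ge m$. Their total relative weight is therefore about $\exp\{c|Z_t|/\xi(Z_t)^2+ct/\xi(Z_t)\}-1$. This is $o(1)$ only because $|Z_t|\beta_t^2\asymp r_t^{1-2d/\alpha}\to0$ and $t/\xi(Z_t)\asymp t/a_t\to0$, each of which is equivalent to $\alpha<2d$. So the point you left as ``needs care'' is settled exactly by the hypothesis; note also that $e^{-2dt}$ comes from the holding times, not from these excursions.

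Making this rigorous requires three ingredients. First, a product-form upper bound on $U(t,y)$ for non-shortest, possibly self-intersecting paths with repeated visits to $Z_t$ (Proposition~\ref{p_shortlong}(2), via the symmetry of $I_n$). Second, a count of such paths (Lemma~\ref{l_paths}). Third, control of their random weights, which in $d\ge2$ cannot be compared path-by-path with a shortest path. The paper does this with an $L^2$ bound with correlations at most $f_t^{-9(2w+s+2\hat w+\hat s)}$ (Proposition~\ref{p_c}(2)), absorbed by the penalties through Lemma~\ref{l_npaths}. Your plan contains none of these steps.

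The shortest-path part of your plan (condition on $(Z_t,\xi(Z_t))$, then take first and second moments of the normalised polymer) is the paper's route. However, your moment computations belong to the regime $\alpha<2$. Here $\alpha>d\ge 2$, so the Pareto variable has finite variance. The truncated variance is therefore of order $1$, not $(\e a_t)^{2-\alpha}$, and $\sum_i\xi(y_i)^2$ is of order $n$, not $n^{2/\alpha}$. With your formulas, both $na_t^{-\alpha}$ and $n^{2/\alpha}/a_t^2$ tend to zero for every $\alpha>d$, so your argument never actually uses $\alpha<2d$. With the correct ones, the first-moment exponent and $R$ are both of order $n\beta_t^2$, which is $o(1)$ exactly when $\alpha<2d$ (Lemma~\ref{l_b2z}).

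The ``transience of the intersection'' is false for $d=2,3$. The difference of two uniform shortest paths behaves like a $(d-1)$-dimensional random walk bridge, so overlaps are of order $\sqrt n$ or $\log n$. It is not needed anyway: the overlap is at most $n$, so the conditional second moment is $\exp\{O(n\beta_t^2)\}=1+o(1)$ even for $y=\hat y$. Thus no averaging over paths is required. The contrast with $d=1$ comes from $\alpha<2$ there ($O(1)$ sites on the path with $\xi$ comparable to $\xi(Z_t)$), not from total overlap.

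Finally, the truncation at $\e\xi(Z_t)$ is not justified as stated. The box spanned by $0$ and $Z_t$ contains $O(1)$ sites above $\e a_t$, and the maximiser condition does not remove them. It is simpler to use the $\mathcal{G}_t$-conditional law with the exact centring $\mu_t(z)$ and normalisation $\lambda_{t,z}(\beta_t)$ of $\zeta_t(z)$, truncating only at $(1-f_t)\xi(Z_t)$ on $\mathcal{E}_t^{\ssup 2}$. This also removes the need to split off $R$.
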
 

\begin{remark} 
As our results show, in all dimensions the first three terms in the logarithmic asymptotics are of orders
$t a_t\gg t\gg \frac{t}{\log t}$,
whereas the final term is finite and random in dimension one, but negligible in higher dimensions. This difference is largely due to a random-polymer averaging effect arising from the large number of shortest paths.
\end{remark}



Our approach is tailored to the regime $\alpha\in(d,2d)$ and does not 
extend to $\alpha\ge 2d$. The first obstacle is that, in the lighter-tailed Pareto regime, the total mass $U(t)$ receives non-negligible contributions not only from the shortest paths but from many longer paths as well. The second is that the slower growth of the extreme values of the potential $\xi$ makes path intersections play a significantly more prominent role in the random polymer picture.
\bigskip
 

\section{Notation and proof strategy}

First of all, we introduce a simpler version 
\begin{align*}
\Psi_t(z)=\xi(z)-\frac{|z|}{t}q\log t, \qquad (t,z)\in(0,\infty)\times\Z^d,
\end{align*}
of the functional $\Psi_t^*$, which is more convenient to work with, 
and we denote its maximiser by $Z_t$. Further, we denote
\begin{align*}
X_t=\frac{Z_t}{r_t}, \qquad Y_t=\frac{\xi(Z_t)}{a_t}
\qquad \text{and}\qquad \Upsilon_t=\frac{\Psi_t(Z_t)}{a_t}.
\end{align*}
Throughout the paper we use several scaling limit results which, in particular, quantify the close relationship between $\Psi_t^*$ and $\Psi_t$, and between their respective maximisers $Z_t^*$ and $Z_t$. These results are obtained using standard point process techniques. The arguments are similar to those in earlier work, including~\cite{klms}, and we therefore postpone their proofs to the Appendix.
In Proposition~\ref{zz} we show that $Z_t=Z_t^*$ with probability tending to one. In particular, the complete localisation result~\eqref{comploc} can equivalently be written as
\begin{align}
\label{comploc1}
\frac{u(t,Z_t)}{U(t)}\to1\qquad\text{in probability}.
\end{align}
In Proposition~\ref{p_new}  we show that
$(X_t, \Upsilon_t, Y_t)\Rightarrow (X, \Upsilon, \Upsilon+q|X|)$, which we then use to obtain the same scaling limit for the first three coordinates in~\eqref{conv4}. 
We note, however, that  
the term $\Psi_t^*(Z_t^*)$ in Theorems~\ref{th_1} and~\ref{th_d} cannot be replaced by 
$\Psi_t(Z_t)$, since the two agree only to first order.
\smallskip

The starting point of our approach is the Feynman--Kac formula~\eqref{fct}, which we interpret as the sum over paths in $\Z^d$, each weighted by its contribution after averaging over the holding times of the continuous-time random walk $(X_t)$. By the localisation result~\eqref{comploc1}, it suffices to consider paths connecting the origin to the localisation site $Z_t$. We derive an explicit expression for the contribution of each such path in terms of the potential values along it and show that, in the regime $\alpha\in(d,2d)$, only the shortest paths contribute to the asymptotics. In dimension one, where there is only one such path, we use characteristic functions, 
guided by the point process intuition, to identify the finite-order random fluctuations. In higher 
dimensions, we use $L^2$-estimates based on an analysis of intersections and self-intersections of the paths.
\smallskip

For technical reasons, we also introduce some auxiliary positive scaling functions $f_t\to 0$ and $g_t\to\infty$ which can be thought of as arbitrarily slowly decaying or growing. We shall need these scales to satisfy 
\begin{align*}
g_t, 1/f_t=O(\log\log\log t). 
\end{align*}

The localisation~\eqref{comploc} and therefore~\eqref{comploc1} was proved in~\cite{klms} by showing a stronger result, namely, that 
\begin{align}
\label{u0u}
\frac{U_0(t)}{U(t)}\to 1\qquad\text{in probability},  
\end{align}
where 
\begin{align}
\label{u0}
U_0(t)=\E\Big[\exp\Big\{\int_0^t \xi(X_s)ds\Big\}\1_{\{X_t=Z_t, J_t<R_t\}}\Big], 
\qquad t>0, 
\end{align}
with $R_t=|Z_t|(1+f_t)$ and $J_t$ being the number of jumps of the random walk $(X_s)$ by time $t$.
This means that only reasonably short trajectories of the random walk ending at $Z_t$ make a non-negligible contribution to $U(t)$.
\smallskip

Denote by 
\begin{align*}
\mathcal{P}_{all}=\big\{y=(y_0, \dots, y_{\ell})\in (\Z^d)^{\ell+1}: \ell\in \N_0, y_0=0, |y_i-y_{i-1}|=1
\text{ for all }1\le i\le \ell\big\}
\end{align*}
the set of all paths on $\Z^d$ starting at the origin. 
For each $y\in \mathcal{P}_{all}$, denote by $\ell(y)$ its length (counted as the number of edges). 
Denote by 
\begin{align*}
\mathcal{P}_t=\big\{y\in  \mathcal{P}_{all}: y_{\ell(y)}=Z_t, \ell(y)<R_t\big\}
\end{align*}
the set of relevant paths according to~\eqref{u0} and denote by  
\begin{align*}
\PP=\big\{y\in  \mathcal{P}_t: \ell(y)=|Z_t|\big\}
\end{align*} 
the set of shortest paths.
\smallskip  

Further, denote by $(\tau_i)_{i\in\mathbb N_0}$ the sequence of holding times of the continuous-time random walk $(X_t)$, each exponentially distributed with rate $2d$, and by 
\begin{align*}
P(t, y)=\Big\{X_{\tau_0+\cdots+\tau_{i-1}}=y_i\text{ for all }1\le i\le \ell(y), 
t-\tau_{\ell(y)}\le \tau_0+\cdots+\tau_{\ell(y)-1}<t\Big\}, \qquad y\in\mathcal{P}_{all},
\end{align*}
the event that the random walk has the trajectory $y$ up to time $t$. Let
\begin{align*}
U(t,y)=\E \Big[\exp\Big\{\int_0^t\xi(X_s)ds\Big\}\1_{P(t,y)}\Big]
\end{align*}
be the contribution to $U(t)$ coming from the random walk trajectories following the path $y$.
According to~\eqref{u0} we have
\begin{align}
\label{deco}
U_0(t)=\sum_{y\in \mathcal{P}_t}U(t,y).
\end{align}

Generalising (2.2) in~\cite{mps1} from $\Z$ to $\Z^d$ by using the jump rate $2d$ instead of $2$, we observe that 
\begin{align}
\label{uty}
U(t,y)=e^{-2dt}I_{\ell(y)}(t; \xi(y_0), \dots, \xi(y_{\ell(y)})), 
\end{align}
where the function $I$ is defined by
\begin{align*}
I_n(t; a_0, \dots, a_n)=e^{ta_n}\int_{\R_+^n}\exp\Big\{\sum_{i=0}^{n-1}x_i(a_i-a_n)\Big\}
\1\Big\{\sum\limits_{i=0}^{n-1}x_i<t\Big\}dx_0\dots dx_{n-1},
\end{align*}
for each $t>0$, $n\in \N_0$, and $a_0, \dots, a_n\in \R$. In particular, $I_0(t; a_0)=e^{ta_0}$. 
It was shown in~\cite[Lemma 3.8]{mps1} that the function $I$ possesses a remarkably symmetric structure. In Proposition~\ref{p_shortlong} we exploit this structure to bound $U(t,y)$ in terms of the product
\begin{align}
\label{products}
\prod_{\substack{i=0\\ y_i\neq Z_t}}^{\ell(y)}\frac{1}{\xi(Z_t)-\xi(y_i)}, 
\end{align}
the length of the path, and the number of visits to $Z_t$. In particular, each extra step of the path incurs a penalty factor of order $\frac{1}{\xi(Z_t)}$, 
while each additional visit to $Z_t$ contributes a factor of order $\frac{t^m}{m!}$, where $m$ is the number of extra visits. These penalties must, however, be balanced against the combinatorial growth in the number of such paths. 
\smallskip

To compare the path-counting entropy with the penalty factors above, we partition the paths according to three parameters. For any $y\in\mathcal{P}_{all}$, let
\begin{align*}
m_t(y)=|\{0\le i\le \ell(y): y_i=Z_t\}|-1
\end{align*}
be the number of repeated visits to $Z_t$, let
\begin{align*}
w_t(y)=\frac{\min\{i: y_i=Z_t\}-|Z_t|}{2}
\end{align*}
be half the number of extra steps taken before the first visit to $Z_t$, and let
\begin{align*}
s_t(y)=|\{|Z_t|+2w_t(y)<i\le \ell(y): y_i\neq Z_t\}|
\end{align*}
be the number of sites visited after the first visit to $Z_t$ that are different from $Z_t$. 
Observe that 
\begin{align*}
s_t(y)\ge m_t(y)\qquad\text{and}\qquad \ell(y)=|Z_t|+m_t(y)+2w_t(y)+s_t(y).
\end{align*}
Further, observe that since $\ell(y)<R_t$, we have  
\begin{align}
\label{mwsR}
m_t(y)+2w_t(y)+s_t(y)<|Z_t|f_t
\end{align}
for any $y\in \mathcal{P}_{t}$.
\smallskip

For each $t>0$ and $m, w, s\in \N_0$ such that $s\ge m$ we denote
\begin{align*}
\mathcal{P}_{t, m,w,s}=\Big\{y\in \mathcal{P}_t: m_t(y)=m, w_t(y)=w, s_t(y)=s\Big\}. 
\end{align*}
In Lemma~\ref{l_paths} we give an upper bound for the size of each $\mathcal{P}_{t, m,w,s}$.
\smallskip

In order to understand the products~\eqref{products}, we write them, with high probability, in the form
\begin{align*}
\prod_{\substack{i=0\\ y_i\neq Z_t}}^{\ell(y)}\frac{1}{\xi(Z_t)-\xi(y_i)}
=\xi(Z_t)^{-\ell(y)+m_t(y)}\exp\Big\{\beta_t\sum_{\substack{i=0\\ y_i\neq Z_t}}^{\ell(y)}
\zeta_t(y_i)\Big\},
\end{align*}
where
\begin{align*}
\beta_t=\frac{1}{\xi(Z_t)}
\end{align*}
and
\begin{align}
\label{zeta}
\zeta_t(z)=-\xi(Z_t)\log \Big(1-\frac{\xi(z)}{\xi(Z_t)}\Big)\1_{\{\xi(z)<(1-pf_t)\xi(Z_t)\}},
\end{align}
with $p=0$ if $d=1$ and $p=1$ if $d\ge 2$.
Conditionally on $(Z_t,\xi(Z_t))$, this representation places the problem naturally in the framework of random polymers: the path $y$ plays the role of the polymer, $\beta_t$ that of the inverse temperature, and the variables $\zeta_t(z)$ that of the random environment. Since $\zeta_t(z)\approx \xi(z)$ when $\xi(z)$ is small relative to $\xi(Z_t)$, the exponential term can be viewed as a polymer weight in an approximately Pareto environment.
\smallskip

For each $t$, let $\mathcal{G}_t$ be the $\sigma$-algebra generated by $Z_t$ and $\xi(Z_t)$, and denote 
the conditional probability and conditional expectation with respect to $\mathcal{G}_t$ by $\Pt$ 
and $\Et$, respectively.  
\smallskip

Section~\ref{s_1dim} is devoted to the one-dimensional case. In Proposition~\ref{p_u1dim}, we show that $U(t)$ can be described in terms of the unique shortest path connecting the origin to $Z_t$. This reduces the random polymer picture to a $\mathcal{G}_t$-conditional central limit theorem problem for the single sum 
\begin{align*}
\sum\limits_{|z|<|Z_t|, zZ_t\ge 0}\zeta_t(z), 
\end{align*}
which we analyse through its $\mathcal{G}_t$-conditional characteristic function. Proposition~\ref{p_psi} provides a representation of this characteristic function in terms of a compensated Poisson process, allowing us to identify the limiting behaviour of the fluctuation term $\chi_t$ in~\eqref{asymp}.
\smallskip

The multidimensional case is treated in Sections~\ref{s_prelim} and~\ref{s_multi}. In Proposition~\ref{p_product}, we centre and normalise the polymer weights by introducing
\begin{align}
\label{ht}
H_t(y)=\prod_{\substack{i=0\\ y_i\neq Z_t}}^{\ell(y)}\exp\Big\{\beta_t \big[\zeta_t(y_i)-\mu_t(y_i)\big]
-\lambda_{t,y_i}(\beta_t)\Big\}-1, 
\qquad y\in \mathcal{P}_{all}, 
\end{align}
where 
\begin{align*}
\mu_t(z)=\Et [\zeta_t(z)], \qquad z\in \Z^d,
\end{align*}
and 
\begin{align*}
\lambda_{t, z}(s)=\log \Et \exp\Big\{s\big[\zeta_t(z)-\mu_t(z)\big]\Big\},
\qquad s\in \R, z\in \Z^d.
\end{align*}
Thus each path $y$ essentially carries the polymer weight $1+H_t(y)$. The deterministic part $1$ is potentially problematic because of the large number of admissible paths, while the fluctuation term $H_t(y)$ requires control of correlations between paths in order to obtain suitable $L^2$-estimates.
\smallskip

In Proposition~\ref{p_xixi}, we separate the contributions from shortest and non-shortest paths. Proposition~\ref{p_xi} treats the deterministic part $1$ for non-shortest paths, showing that, when $\alpha\in(d,2d)$, the combinatorial growth of the spaces $\mathcal{P}_{t,m,w,s}$ is dominated by the penalties incurred by such paths.
\smallskip

The fluctuation terms $H_t(y)$ are treated in a similar way for shortest and non-shortest paths, although in the latter case the sizes of the path spaces and the corresponding penalty terms must again be taken into account. In Proposition~\ref{p_c}, the correlations between $H_t(y)$ and $H_t(\hat y)$ are controlled through intersections of the corresponding paths and are shown to be sufficiently small in the regime $\alpha\in(d,2d)$. This implies that these terms do not contribute significantly to the asymptotics of $U(t)$.
\smallskip

As a result, the only non-negligible contribution comes from the deterministic part $1$ of the polymer weights along the shortest paths, which yields the asymptotics stated in Theorem~\ref{th_d}.
\bigskip



Throughout the paper we use several typical events which occur with overwhelming probability. 
According to~\eqref{u0u} there is an event $\mathcal{E}_t^{\text{loc}}$ such that 
\begin{align}
\label{comploc0}
\text{\rm P}\big(\mathcal{E}_t^{\text{loc}}\big)\to 1\qquad\text{and}\qquad  
\Big(1-\frac{U_0(t)}{U(t)}\Big)\1_{\mathcal{E}_t^{\text{loc}}}\to 0\qquad\text{almost surely}. 
\end{align}
Further, we define 
\begin{align*}
\mathcal{E}_t^{\ssup 1}=
\Big\{f_t<|X_t|<g_t, f_t<Y_t< g_t, 
\min_{1\le i\le d}\frac{|X_{t,i}|}{|X_t|}>f_t,
1-q\frac{|X_t|}{Y_t}>f_t \Big\}
\end{align*}
where $X_t=(X_{t,1}, \dots, X_{t,d})$,  and
\begin{align*}
\mathcal{E}_t^{\ssup 2}
=\Big\{\max_{\substack{|z|<R_t\\ z\neq Z_t}}\frac{\xi(z)}{\xi(Z_t)}<1-f_t\Big\},
\end{align*}
which contain useful typical properties of the random field. 
We consider those two events separately since
$\mathcal{E}_t^{\ssup 1}\in \mathcal{G}_t$
but $\mathcal{E}_t^{\ssup 2}\not\in \mathcal{G}_t$. We will show in Proposition~\ref{events}
in the Appendix that $\text{\rm P}(\mathcal{E}_t^{\ssup 1})\to 1$ and $\text{\rm P}(\mathcal{E}_t^{\ssup 2})\to 1$. 
\smallskip

Throughout the paper, we use the asymptotic notation $o(\cdot)$ and $O(\cdot)$
uniformly over the underlying probability space.
Specifically, for positive random functions
$a_t$ and $b_t$ we write $a_t=o(b_t)$ 
if there exists a deterministic function $c_t\to 0$ such that $\frac{a_t}{b_t}<c_t$ eventually for all $t$
almost surely.  Similarly, we write $a_t=O(b_t)$ 
if there exists a deterministic constant $c$ such that $\frac{a_t}{b_t}<c$ eventually for all $t$
almost surely. 
\smallskip

\bigskip


\section{Path contributions and path counting}

We begin by representing the contribution $U(t,y)$ of a path $y$ to the total mass in terms of a product of potential differences along the path, together with factors accounting for repeated visits to $Z_t$. Since the shortest paths will turn out to provide the main contribution to $U(t)$, we need precise asymptotics for their total contribution, although not for each path individually; for non-shortest paths, a sharp upper bound is sufficient. 

\begin{prop} 
\label{p_shortlong}
$\phantom{a}$

\begin{itemize}
\item[(1)] Almost surely on the event $\mathcal{E}_t^{\rm loc}\cap\mathcal{E}_t^{\ssup 1}\cap \mathcal{E}_t^{\ssup 2}$,  as $t\to \infty$
\begin{align}
\label{short}
\sum_{y\in\PP}U(t,y)=e^{t\xi(Z_t)-2dt}\sum_{y\in \PP}
\prod_{j=0}^{|Z_t|-1}\frac{1}{\xi(Z_t)-\xi(y_j)} + o\big(U(t)\big).
\end{align}
\item[(2)] 
Almost surely on the event $\mathcal{E}_t^{\ssup 2}$, for all sufficiently large $t$
\begin{align}
\label{long}
U(t,y)\le e^{t\xi(Z_t)-2dt}\frac{t^m}{m!}\prod_{\substack{i=0\\ y_i\neq Z_t}}^{\ell(y)}
\frac{1}{\xi(Z_t)-\xi(y_i)}   \phantom{aal}
\end{align}
for any $y\in \mathcal{P}_{t, m,w,s}$ for each $(m,w,s)$.
\end{itemize}
\end{prop}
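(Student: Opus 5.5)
The plan is to work directly with the representation \eqref{uty}, $U(t,y)=e^{-2dt}I_{\ell(y)}(t;\xi(y_0),\dots,\xi(y_{\ell(y)}))$, and to read the integral in $I_n$ probabilistically, as a Laplace-type integral over independent exponential variables. For $y\in\mathcal{P}_t$ the endpoint is $y_{\ell(y)}=Z_t$, so $a_n=\xi(Z_t)$ in $I_n$. On $\mathcal{E}_t^{\ssup 2}$ every site $z\neq Z_t$ with $|z|<R_t$ satisfies $\xi(z)<(1-f_t)\xi(Z_t)$, and all sites of such a path lie in that ball. Hence for each $i<\ell(y)$ with $y_i\neq Z_t$ the coefficient $a_i-a_n=\xi(y_i)-\xi(Z_t)$ is strictly negative, in fact at most $-f_t\xi(Z_t)$. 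For $i<\ell(y)$ with $y_i=Z_t$, which happens exactly $m=m_t(y)$ times, the coefficient is $0$.

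\emph{Part (2).} We split the variables $x_0,\dots,x_{\ell(y)-1}$ into the $m$ variables with $y_i=Z_t$ and the remaining ones. We then enlarge the domain $\{\sum x_i<t\}$ to $\{\sum_{y_i=Z_t}x_i<t\}\times\R_+^{\#\{i:\,y_i\neq Z_t\}}$, which is legitimate because the integrand is positive. The integral now factorises:
\begin{itemize}
\item the $m$ zero-coefficient variables integrate over a simplex and give $\mathrm{vol}=t^m/m!$;
\item each remaining variable gives $\int_0^\infty e^{-x(\xi(Z_t)-\xi(y_i))}dx=1/(\xi(Z_t)-\xi(y_i))$.
\end{itemize}
Multiplying by $e^{t\xi(Z_t)-2dt}$ yields \eqref{long}. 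The product over $y_i\neq Z_t$ automatically runs over $i<\ell(y)$, since $y_{\ell(y)}=Z_t$. This part is routine.

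\emph{Part (1).} For $y\in\PP$ the path visits $Z_t$ only at its endpoint, since it is a shortest path, so $m=0$ and all coefficients are negative. Write $\lambda_j=\xi(Z_t)-\xi(y_j)\ge f_t\xi(Z_t)$ for $0\le j<|Z_t|$. The exact identity is
\begin{align*}
U(t,y)=e^{t\xi(Z_t)-2dt}\prod_{j=0}^{|Z_t|-1}\frac1{\lambda_j}\,\big(1-\mathbf P(S_y\ge t)\big),
\end{align*}
where $S_y=\sum_j \mathcal{T}_j$ is a sum of independent exponentials $\mathcal{T}_j$ with rates $\lambda_j$. It therefore suffices to show that $\max_{y\in\PP}\mathbf P(S_y\ge t)\to0$ uniformly. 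Granting this, summing over $\PP$ gives
\begin{align*}
\text{main term}=(1+o(1))\sum_{y\in\PP}U(t,y)\le(1+o(1))\,U(t),
\end{align*}
so the difference is $o(U(t))$. Here $\mathcal{E}_t^{\rm loc}$ is used only to make the comparison with $U(t)$, through $U_0(t)\le U(t)$, meaningful at the scale of \eqref{comploc0}.

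\emph{The tail estimate.} This is the one real obstacle, and I would handle it with a Chernoff bound using $\theta=\tfrac12 f_t\xi(Z_t)$:
\begin{align*}
\mathbf P(S_y\ge t)\le e^{-\theta t}\prod_j\frac{\lambda_j}{\lambda_j-\theta}\le \exp\{-\tfrac12 f_t\xi(Z_t)t+|Z_t|\log 2\}.
\end{align*}
On $\mathcal{E}_t^{\ssup 1}$ we have $|Z_t|\le g_t r_t$ and $\xi(Z_t)\ge f_ta_t$. By \eqref{ra}, $r_t/(ta_t)=r_t^{1-d/\alpha}/t=1/\log t$. Hence the exponent is at most $-ta_t\big(\tfrac12 f_t^2-g_t\log2/\log t\big)\to-\infty$, since $g_t,1/f_t=O(\log\log\log t)$. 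This bound is uniform in $y\in\PP$, which completes the plan. The only delicate point is checking that the growth rates of $f_t,g_t$ make the entropy term $|Z_t|\log 2$ negligible against $f_t^2ta_t$, and the factor $\log t$ gap supplies exactly that.
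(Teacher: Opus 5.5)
Your proof is correct, but it controls the error term by a different route from the paper's.

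\textbf{Part (2).} The paper uses the symmetry of $I$ (Lemma 3.8 of \cite{mps1}) to move the $m+1$ copies of $\xi(Z_t)$ to the last places, and then quotes Lemma 3.9 of \cite{mps1}. You instead enlarge the domain $\{\sum_i x_i<t\}$ to $\{\sum_{i:\,y_i=Z_t}x_i<t\}\times\R_+^{\ell(y)-m}$. This gives the same bound in one line and does not depend on \cite{mps1}.

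\textbf{Part (1).} Your exact identity $U(t,y)=e^{t\xi(Z_t)-2dt}\prod_j\lambda_j^{-1}\,\bigl(1-\mathbf P(S_y\ge t)\bigr)$ is the same as the paper's decomposition via Lemma 3.8 of \cite{mps1}. Split $\{S_y\ge t\}$ according to the first index $i$ at which the partial sums of the $\mathcal T_j$ reach $t$. The $i$-th piece is then exactly the subtracted term $I_i(t;\xi(y_0),\dots,\xi(y_i))\prod_{j\ge i}\lambda_j^{-1}$, normalised by the main term.

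The two proofs differ only in how this error is bounded.
\begin{itemize}
\item The paper reads each subtracted term as $e^{2dt}U(t,w_i(y))$, the contribution of a truncated path of length $i<|Z_t|$. It then uses $\lambda_j>d$ and the fact that a truncated path extends to at most $d^{|Z_t|-i}$ shortest paths. The total error is thereby absorbed into $U(t)-U_0(t)$, which is $o(U(t))$ by localisation; this is the only place $\mathcal{E}_t^{\rm loc}$ enters.
\item You bound $\mathbf P(S_y\ge t)$ directly by a Chernoff estimate. This needs the quantitative comparison $f_t^2\,t a_t\gg g_t r_t$, which follows from $r_t/(ta_t)=1/\log t$ and the slow growth of $f_t^{-1}$ and $g_t$.
\end{itemize}

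What each route buys:
\begin{itemize}
\item Yours is more elementary and does not rely on \cite{mps1}. It also proves something stronger: each shortest-path contribution equals its main term times $1+o(1)$, uniformly in $y\in\PP$.
\item Contrary to your remark, your argument does not use $\mathcal{E}_t^{\rm loc}$ at all. The inequality $\sum_{y\in\PP}U(t,y)\le U_0(t)\le U(t)$ holds deterministically, so part (1) already holds on $\mathcal{E}_t^{\ssup 1}\cap\mathcal{E}_t^{\ssup 2}$.
\item The paper's route needs no tail estimate, because it reuses the localisation result it has anyway.
\end{itemize}
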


\begin{proof} 
(1)
Consider $y\in\PP$. Since $y$ has no self-intersections, we can 
apply Lemma 3.8 from~\cite{mps1} to the function $I$ in~\eqref{uty}, which yields
\begin{align}
U(t,y)
=e^{-2dt}\Big[&e^{t\xi(Z_t)}\prod_{j=0}^{|Z_t|-1}\frac{1}{\xi(Z_t)-\xi(y_j)}\notag\\
&-\sum_{i=0}^{|Z_t|-1}I_i(t; \xi(y_0), \dots, \xi(y_i))\prod_{j=i}^{|Z_t|-1}\frac{1}{\xi(Z_t)-\xi(y_j)}\Big].
\label{t5}
\end{align}
Now it suffices to show that the contribution from the subtracted term is negligible.  Observe that using again~\eqref{uty} we can rewrite 
\begin{align*}
I_i(t; \xi(y_0), \dots, \xi(y_i))=e^{2dt} U(t, w_i(y)),
\end{align*}
where $w_i(y)$ is the truncation of $y$ to its first $i$ edges. Using 
$\xi(Z_t)-\xi(y_j)\ge f_t\xi(Z_t)>d$ for all $j$ on the event 
$\mathcal{E}_t^{\ssup 1}\cap \mathcal{E}_t^{\ssup 2}$ 
we obtain,  
summing over all shortest paths, 
for the subtracted term of~\eqref{t5}
\begin{align*}
\sum_{y\in\PP}
&e^{-2dt}
\sum_{i=0}^{|Z_t|-1}I_i(t; \xi(y_0), \dots, \xi(y_i))\prod_{j=i}^{|Z_t|-1}\frac{1}{\xi(Z_t)-\xi(y_j)}
\le \sum_{y\in\PP} 
\sum_{i=0}^{|Z_t|-1}d^{\, i - |Z_t|} U(t, w_i(y)).
\end{align*} 
Observe that the same $i$-truncated path  can correspond to no more than $d^{|Z_t|-i}$
paths $y\in\PP$. According to~\eqref{fct}, ~\eqref{u0} and~\eqref{comploc0} on the event $\mathcal{E}_t^{\rm loc}$, this yields 
\begin{align*}
\sum_{y\in\PP} 
\sum_{i=0}^{|Z_t|-1}d^{\, i -|Z_t|}U(t, w_i(y))
\le \sum_{i=0}^{|Z_t|-1}\sum_{w: \ell(w)=i}U(t, w)
\le U(t)-U_0(t)=o(U(t)),
\end{align*} 
which completes the proof of~\eqref{short}. 
\smallskip

(2) Let us consider $y\in \mathcal{P}_{t, m,w,s}$ and again use~\eqref{uty}. It was shown in~\cite[Lemma 3.8]{mps1} that $I$ is symmetric with respect to the values $\xi(y_0), \dots, \xi(y_{\ell(y)})$, which allows us to re-order them in such a way that the values $\xi(Z_t)$ occupy the last $m+1$ places. Since 
$\xi(Z_t)>\xi(y_i)$ whenever $y_i\neq Z_t$ on the event $\mathcal{E}_t^{\ssup 2}$, now the result follows from~\cite[Lemma 3.9]{mps1} with $n=\ell(y)-m$, $k=n$ and $i=m$, $a_0, \dots, a_{n-1}$ being the values of $\xi$ along $y$ except when it visits $Z_t$ and $a_n, \dots, a_{n+m}=\xi(Z_t)$, and 
from $I_0(t, a_n)=e^{ta_n}$. 
\end{proof}


In the following lemma we establish an upper bound  on the size of the path spaces 
$\mathcal{P}_{t,m,w,s}$ relative to the size of $\mathcal{P}_t^{\rm min}$. 

\begin{lemma} 
\label{l_paths}
Almost surely on the event $\mathcal{E}_t^{\ssup 1}$
\begin{align*}
|\mathcal{P}_{t, m, w, s}|\le N(Z_t) \Big(\frac{2d|Z_t|}{f_t}\Big)^w\frac{(2d)^s}{w!}
\end{align*}
for all $t$ and all $(m,w,s)$ .
\end{lemma}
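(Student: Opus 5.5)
We count paths in $\mathcal{P}_{t,m,w,s}$ by splitting each $y$ at its first visit to $Z_t$, i.e.\ at index $k=|Z_t|+2w$. Write $y=(y^{(1)},y^{(2)})$ with $y^{(1)}=(y_0,\dots,y_k)$, a path from $0$ to $Z_t$ of length $|Z_t|+2w$ that first hits $Z_t$ at its endpoint, and $y^{(2)}=(y_k,\dots,y_{\ell(y)})$, a path of length $m+s$ from $Z_t$ to $Z_t$. This map is injective, so
\begin{align*}
|\mathcal{P}_{t,m,w,s}|\le A_w\cdot B_{m,s},
\end{align*}
where $A_w$ counts the possible first segments and $B_{m,s}$ the possible second segments. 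The goal is then $A_w\le N(Z_t)\big(\frac{2d|Z_t|}{f_t}\big)^w\frac{1}{w!}$ and $B_{m,s}\le (2d)^s$.

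The second factor is straightforward. Every step of $y^{(2)}$ that lands at $Z_t$ is determined uniquely once the previous site is known, namely the step must go to $Z_t$. Such a step is only possible from a neighbour of $Z_t$. The remaining $s$ steps land at sites different from $Z_t$, and each has at most $2d$ choices. The positions of the $m$ returns among the $m+s$ steps are determined along the way. So I would encode $y^{(2)}$ by the sequence of its moves to non-$Z_t$ sites, interleaved with the rule "go to $Z_t$ now". To avoid an extra factor $\binom{m+s}{m}$, note that a step into $Z_t$ can only occur from a neighbour of $Z_t$, and after the step the walk is back at $Z_t$. A cleaner encoding is to view $y^{(2)}$ as a concatenation of excursions from $Z_t$. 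Each excursion is a sequence of non-$Z_t$ steps followed by one forced return step. The choice "return now versus continue" at a neighbour can be absorbed into the $2d$ choices of the next step: the move to $Z_t$ is one of the $2d$ neighbours, so it is counted among those choices. I would therefore count all $m+s$ steps with at most $2d$ choices each, while checking that returns cost nothing extra. Concretely, I would bound by the number of nearest-neighbour walks with $s$ steps to non-$Z_t$ sites and $m$ steps to $Z_t$. Each step to $Z_t$ is followed, if anything follows, by a step away. I expect a small amount of care is needed here. If the bare counting gives $(2d)^{m+s}$, I will use $s\ge m$ and the fact that the step out of $Z_t$ and the step back can be paired, so the final step into $Z_t$ of each excursion is determined by its predecessor.

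The first factor $A_w$ is the main obstacle. A path of length $|Z_t|+2w$ from $0$ to $Z_t$ consists of $|Z_t|+w$ steps in the "correct" coordinate directions (the signs of the components of $Z_t$) and $w$ steps in the wrong directions, cancelled by $w$ extra correct ones. I would choose the positions of the $2w$ "extra" steps: $w$ wrong steps and $w$ compensating correct steps, with the correct steps in a given coordinate direction being interchangeable. Removing one wrong step together with one compensating correct step in the same coordinate yields a shortest path. Reversing this, every such path arises from a shortest path (there are $N(Z_t)$) by inserting $w$ cancelling pairs. Each pair is a choice of direction, at most $2d$, together with two insertion positions, at most $(|Z_t|+2w)^2$. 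Unordered insertion gives the $1/w!$. This naive count gives $(2d(|Z_t|+2w)^2)^w/w!$, which is too big by a factor of $|Z_t|$ per pair. To reduce this, I would use that on $\mathcal{E}_t^{(1)}$ each coordinate satisfies $|Z_{t,i}|\ge f_t|X_t|r_t\ge f_t|Z_t|$. Instead of inserting two positions independently, I would insert only the wrong step, with $2d(|Z_t|+2w)$ choices, and delete one matching correct step from the $|Z_{t,i}|+\cdots$ correct steps in that coordinate. This is a many-to-one map, and it loses a factor of at least $f_t|Z_t|$ in multiplicity. The comparison then becomes $N(\text{new endpoint})/N(Z_t)$, via multinomial ratios of the form $\frac{|Z_t|}{|Z_{t,i}|}\le \frac1{f_t}$. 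Combined with $w\le |Z_t|f_t$ from \eqref{mwsR}, so that $|Z_t|+2w\le 2|Z_t|$ up to constants, this yields the factor $\frac{2d|Z_t|}{f_t}$ per pair, with the constants absorbed.
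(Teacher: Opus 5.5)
Your plan for the first segment is essentially the paper's argument. The paper counts paths of length $|Z_t|+2w$ to $Z_t$ directly as $\sum_{w_1+\cdots+w_d=w}\binom{|Z_t|+2w}{w_1,\dots,w_d,|Z_{t,1}|+w_1,\dots,|Z_{t,d}|+w_d}$. Your insertion/deletion double count, once made precise, gives the same ratio $\frac{(|Z_t|+2w)!}{|Z_t|!}\prod_i\frac{|Z_{t,i}|!}{(|Z_{t,i}|+w_i)!\,w_i!}$ to $N(Z_t)$, which is then bounded using $|Z_{t,i}|>f_t|Z_t|$ and $2w<|Z_t|f_t$.

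The genuine gap is the second segment: the bound $B_{m,s}\le(2d)^s$ you are aiming for is false, so no refinement of your encoding can reach it. If you encode by arrivals, the returns to $Z_t$ are forced, but you must also record \emph{which} of the $m+s$ steps are returns. If you encode by departures (where, as you note, returning is one of the $2d$ options at the preceding site), the $m$ steps leaving $Z_t$ each carry $2d$ free choices; these are the step at the first visit and the step after every return but the last. Pairing a departure with the following forced return removes nothing, since the return already contributes no extra factor in that count.

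For a concrete counterexample, take $d=2$, $w=0$, $m=4$, $s=6$. The first segment is then exactly a shortest path, so there is no slack in the factor $N(Z_t)$. The second segment may consist of three excursions of length $2$ ($4$ choices each) and one first-return loop of length $4$ ($36-16=20$ choices in $\Z^2$), in $4$ possible orders. Since $|Z_t|f_t>f_t^2r_t\to\infty$ on $\mathcal{E}_t^{\ssup 1}$, all these paths satisfy $\ell(y)<R_t$ for large $t$. Hence $|\mathcal{P}_{t,4,0,6}|\ge 4\cdot 4^3\cdot 20\,N(Z_t)=5120\,N(Z_t)>4^6N(Z_t)$. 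Similarly, $d=1$, $m=5$, $s=7$ gives $160>128$. The paper's one-line estimate of the $(m,s)$-part ``very roughly'' by $(2d)^s$ has exactly the same defect, so the lemma as literally stated cannot be proved.

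What your departure encoding does prove is $B_{m,s}\le(2d)^{m+s}\le(4d^2)^s$ (using $s\ge m$), i.e.\ the lemma with $(2d)^s$ replaced by $(2d)^{m+s}$. This corrected bound is all that is used later. In the proof of Lemma~\ref{l_npaths} the extra factor $(2d)^m$ only turns $\exp\{2dt/(f_t^c\xi(Z_t))\}$ into $\exp\{4d^2t/(f_t^c\xi(Z_t))\}$. That is still $1+o(1)$ on $\mathcal{E}_t^{\ssup 1}$, because $t/(f_t^c\xi(Z_t))<t/(a_tf_t^{c+1})\to0$ for $\alpha<2d$. So you should state and prove this weaker bound rather than try to reach $(2d)^s$.
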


\begin{proof}  Denote $Z_t=(Z_{t,1}, \dots, Z_{t,d})$ and observe that  
\begin{align}
\label{sh}
N(Z_t)={|Z_t| \choose |Z_{t,1}|, \dots, |Z_{t,d}|}.
\end{align}
First, let us estimate $|\mathcal{P}_{t,0,w,0}|$  by dropping the condition that $Z_t$
can only be visited at the end by any such path. Using~\eqref{sh} and $|Z_{t,i}|>|Z_t|f_t$ 
on the event $\mathcal{E}_t^{\ssup 1}$, we have 
\begin{align*}
|\mathcal{P}_{t,0,w,0}|
&\le \sum_{w_1+\cdots+w_d=w}{|Z_t|+2w\choose w_1, \dots, w_d, |Z_{t,1}|+w_1, \dots, |Z_{t,d}|+w_d} \\
&=N(Z_t)\frac{(|Z_t|+2w)!}{|Z_t|!}\sum_{w_1+\cdots+w_d=w}
\prod_{i=1}^d\frac{|Z_{t,i}|!}{(|Z_{t,i}|+w_i)!w_i!}\\
&\le N(Z_t) (|Z_t|+2w)^{2w}\sum_{w_1+\cdots+w_d=w}
\prod_{i=1}^d
\frac{1}{|Z_{t,i}|^{w_i}w_i!}\\
&\le N(Z_t) \frac{(|Z_t|+2w)^{2w}}{|Z_t|^w f_t^w w!}\sum_{w_1+\cdots+w_d=w}
{ w\choose w_1, \dots, w_d}
= N(Z_t) \frac{d^w(|Z_t|+2w)^{2w}}{|Z_t|^w f_t^w w!}. 
\end{align*}
If $2w\ge |Z_t|f_t$ then $\mathcal{P}_{t,0,w,0}=\emptyset$ by~\eqref{mwsR}. 
If $2w<|Z_t|f_t$
we obtain 
\begin{align*}
|\mathcal{P}_{t,0,w,0}|\le N(Z_t) \Big(\frac{2d|Z_t|}{f_t}\Big)^w\frac{1}{w!}.
\end{align*}
Estimating the contribution from the paths corresponding to $m$ and $s$ very roughly by the factor of $(2d)^s$ we obtain the required estimate. 
\end{proof}

The following lemma is an immediate consequence of the assumption $\alpha<2d$. It will be used repeatedly throughout the paper and plays a key role both in showing that only the shortest paths contribute to the asymptotics and in the disappearance of fluctuations in dimensions $d\ge2$.

\begin{lemma} 
\label{l_b2z}
Let  $\alpha\in (d, 2d)$. For any $c\in \R$, as $t\to\infty$
\begin{align*}
\beta_t^2 |Z_t|=o(f_t^c)
\end{align*}
almost surely on the event $\mathcal{E}_t^{\ssup 1}$.
\end{lemma}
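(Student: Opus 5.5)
On the event $\mathcal{E}_t^{\ssup 1}$ both $\beta_t$ and $|Z_t|$ are controlled by the deterministic scales $a_t$ and $r_t$, up to factors involving only $f_t$ and $g_t$. The plan is to substitute these bounds and use \eqref{ra} to get a negative power of $t/\log t$. Such a power beats any power of $f_t$, because $1/f_t=O(\log\log\log t)$.

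\textbf{Step 1 (bounds from the event).} By definition $\beta_t=1/\xi(Z_t)=1/(a_tY_t)$ and $|Z_t|=r_t|X_t|$. On $\mathcal{E}_t^{\ssup 1}$ we have $Y_t>f_t$ and $|X_t|<g_t$, so
\begin{align*}
\beta_t^2|Z_t|\le \frac{g_t}{f_t^2}\,\frac{r_t}{a_t^2}.
\end{align*}
These bounds are deterministic in $t$, which is what the convention for $o(\cdot)$ requires.

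\textbf{Step 2 (the deterministic ratio).} By \eqref{ra}, $r_t/a_t^2=r_t^{1-2d/\alpha}$. Since $\alpha<2d$, the exponent $1-2d/\alpha$ is negative. Moreover $r_t=(t/\log t)^{\alpha/(\alpha-d)}\to\infty$ because $\alpha>d$. Hence
\begin{align*}
\frac{r_t}{a_t^2}=\Big(\frac{t}{\log t}\Big)^{-\frac{2d-\alpha}{\alpha-d}},
\end{align*}
which decays polynomially in $t$ up to a logarithm.

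\textbf{Step 3 (absorbing the slowly varying factors).} It remains to show that
\begin{align*}
\frac{g_t}{f_t^{2+c}}\Big(\frac{t}{\log t}\Big)^{-\frac{2d-\alpha}{\alpha-d}}\to 0
\end{align*}
for each fixed $c\in\R$. Since $g_t,1/f_t=O(\log\log\log t)$ and $f_t\to0$, the prefactor is at most $(\log\log\log t)^{K}$ for some constant $K$ depending on $c$. This holds also for $c<-2$, where $f_t^{-(2+c)}\le 1$. A power of $\log\log\log t$ is negligible against any positive power of $t/\log t$, so we obtain $\beta_t^2|Z_t|\le c_tf_t^c$ with $c_t\to0$ deterministic, as required.

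There is no real obstacle here. The only points needing care are that the exponent $-(2d-\alpha)/(\alpha-d)$ is strictly negative exactly because $\alpha\in(d,2d)$, and that all bounds are uniform in $\omega$ on the event, which is what makes the $o(\cdot)$ notation legitimate.
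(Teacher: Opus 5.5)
Your proof is correct and follows the paper's argument exactly: you bound $\beta_t^2|Z_t|$ by $\frac{g_t}{f_t^2}\,\frac{r_t}{a_t^2}$ on $\mathcal{E}_t^{\ssup 1}$, rewrite this via \eqref{ra} as $\frac{g_t}{f_t^2}\,r_t^{1-2d/\alpha}$, and use $\alpha<2d$. Your Step 3 only spells out what the paper leaves implicit, namely that the factor $g_t f_t^{-(2+c)}$, at most polylogarithmic in $\log\log t$, is absorbed by the polynomial decay.
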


\begin{proof} This follows immediately from $Y_t>f_t$ and $|X_t|<g_t$ on $\mathcal{E}_t^{\ssup 1}$
as well as~\eqref{ra} since
\begin{align*}
\beta_t^2 |Z_t|< \frac{r_tg_t}{a_t^2f_t^2}=\frac{g_t}{f_t^2}r_t^{1-2d/\alpha}=o(f_t^c)
\end{align*}
due to $\alpha<2d$. 
\end{proof}

The final lemma of this section shows that the penalties associated with paths from $\mathcal{P}_{t,m,w,s}$, which will arise later in the analysis, overpower the combinatorial growth of these spaces.

\begin{lemma} 
\label{l_npaths}
Let $\alpha\in (d, 2d)$.
For any $c\in \R$, as $t\to\infty$ 
\begin{align*}
\frac{1}{N(Z_t)}\sum_{(m,w,s)\neq 0}\frac{t^m}{(f_t^c \xi(Z_t))^{2w+s} m!}|\mathcal{P}_{t, m, w, s}|
=o(1)
\end{align*}
almost surely on the event $\mathcal{E}_t^{\ssup 1}$. 
\end{lemma}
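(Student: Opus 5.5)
Insert the bound of Lemma~\ref{l_paths} into the sum. This cancels $N(Z_t)$ and leaves, on $\mathcal{E}_t^{\ssup 1}$,
\begin{align*}
\sum_{(m,w,s)\neq 0}\frac{t^m}{m!\,(f_t^c\xi(Z_t))^{2w+s}}\Big(\frac{2d|Z_t|}{f_t}\Big)^w\frac{(2d)^s}{w!}.
\end{align*}
We then reorganise the summation so that it factorises. The constraint $s\ge m$ lets us write $s=m+s'$ with $s'\ge 0$. Dropping any further constraints only enlarges the sum, so we may sum freely over $m,w,s'\ge 0$ with $(m,w,s')\ne 0$. The resulting sum is bounded by $A\cdot B\cdot C-1$, where
\begin{align*}
A=\sum_{m\ge 0}\frac{1}{m!}\Big(\frac{2d\,t}{f_t^c\xi(Z_t)}\Big)^m,\qquad
B=\sum_{w\ge 0}\frac{1}{w!}\Big(\frac{2d|Z_t|}{f_t^{1+2c}\xi(Z_t)^2}\Big)^w,\qquad
C=\sum_{s'\ge 0}\Big(\frac{2d}{f_t^c\xi(Z_t)}\Big)^{s'}.
\end{align*}
Here $A=e^{x_t}$ and $B=e^{y_t}$ are exponentials, and $C=1/(1-z_t)$ is geometric. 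The claim reduces to showing $x_t,y_t,z_t\to 0$ uniformly on $\mathcal{E}_t^{\ssup 1}$, which gives $ABC-1=o(1)$.

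The terms $y_t$ and $z_t$ are easy. We have $y_t=2d f_t^{-1-2c}\beta_t^2|Z_t|$, and this is $o(1)$ by Lemma~\ref{l_b2z} applied with exponent $1+2c$. For $z_t$, we have $\xi(Z_t)=Y_ta_t>f_ta_t$ on $\mathcal{E}_t^{\ssup 1}$, while $a_t$ grows polynomially and $f_t^{-1}=O(\log\log\log t)$. Hence $z_t=o(1)$.

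The main obstacle is $x_t$. A single extra visit to $Z_t$ costs only $t/\xi(Z_t)$, which is huge, so the naive bound fails. The fix is to avoid paying the factor $1/\xi(Z_t)$ per visit. Instead we use that each extra visit must be preceded by a departure from $Z_t$, which is counted in $s$. In fact each revisit requires at least two steps away, so $s\ge 2m$ when $m\ge1$; I would check this from the definition of $s_t(y)$, and otherwise adjust the decomposition accordingly. Even so, this gives a factor $t/\xi(Z_t)^2$ per revisit. Since $\xi(Z_t)\asymp a_t$ and $a_t^2=(t/\log t)^{2d/(\alpha-d)}$ with $2d/(\alpha-d)>2>1$, we get $t/a_t^2\to 0$ polynomially, which beats any power of $f_t$. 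So I would rewrite $s=2m+s''$, and the effective $x_t$ becomes $(2d)^2t/(f_t^{2c}\xi(Z_t)^2)$, which is $o(1)$.

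If $s\ge 2m$ fails because the walk can jump off and back onto $Z_t$ in a single step, I would recount. A return to $Z_t$ needs at least two steps, one off and one back, and the step back lands on $Z_t$ and is not counted in $s$. So each revisit contributes at least one site $\ne Z_t$, giving $s\ge m$. This yields only one factor $1/\xi(Z_t)$ per revisit, and then I would need to recheck whether the definition of $\ell$ forces a second one. This is the step I would verify most carefully, since it is the crux of the estimate.
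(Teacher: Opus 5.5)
Your factorisation into $ABC-1$ is exactly the paper's computation: the paper sums $\sum_{s\ge m}$ as a geometric series, which is your substitution $s=m+s'$. Your bounds for $y_t$ and $z_t$ are correct. The gap is in $x_t$, and it comes from misjudging orders of magnitude: in this regime $t/\xi(Z_t)$ is not huge but polynomially small. On $\mathcal{E}_t^{\ssup 1}$ you have $\xi(Z_t)=Y_ta_t>f_ta_t$, and $a_t=(t/\log t)^{d/(\alpha-d)}$ with $d/(\alpha-d)>1$ precisely because $\alpha<2d$. You even wrote down $2d/(\alpha-d)>2$, which is the same fact. Hence $x_t=2dt/(f_t^c\xi(Z_t))<2d\,t/(f_t^{c+1}a_t)=o(1)$, since the polynomial decay of $t/a_t$ beats any power of $f_t$. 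This single line is what the paper uses. With it, the inequality $s\ge m$ already built into your factorisation is all that is needed: $ABC-1=e^{x_t+y_t}(1-z_t)^{-1}-1=o(1)$.

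The repair you propose instead would not work, because $s\ge 2m$ is false. A nearest-neighbour path can return to $Z_t$ in two steps, $Z_t\to z\to Z_t$, and only the intermediate index is counted in $s_t(y)$. For example, the shortest path to $Z_t$ followed by a single excursion to a neighbour and back has $m=s=1$ and $w=0$. It belongs to $\mathcal{P}_t$ as soon as $|Z_t|f_t>2$. So no second factor $1/\xi(Z_t)$ per revisit is available, and your final paragraph, which leaves exactly this point open, does not close the argument as written. The worry that each revisit brings a large factor $t/\xi(Z_t)$ is justified for $\alpha\ge 2d$, where $t/a_t\to\infty$; this is one reason the method does not extend there. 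For $\alpha\in(d,2d)$ it simply does not arise.
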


\begin{proof} 
It follows from Lemma~\ref{l_paths} that 
\begin{align*}
\frac{1}{N(Z_t)}
\sum_{(m,w,s)\neq 0}
&\frac{t^m}{(f_t^c \xi(Z_t))^{2w+s} m!}|\mathcal{P}_{t, m, w, s}|
 \le 
 \sum_{w=0}^{\infty}\sum_{m=0}^{\infty}\sum_{s=m}^{\infty}
 \frac{1}{w!} \Big(\frac{2d|Z_t|}{f_t^{2c+1} \xi(Z_t)^2}\Big)^w\Big(\frac{2d}{f_t^c\xi(Z_t)}\Big)^s
  \frac{t^m}{m!} 
  -1\\
&=\exp\Big\{\frac{2d|Z_t|}{f_t^{2c+1} \xi(Z_t)^2}+\frac{2d t}{f_t^c\xi(Z_t)}\Big\}\Big(1-\frac{2d}{f_t^c\xi(Z_t)}\Big)^{-1}-1=o(1)
\end{align*}
by Lemma~\ref{l_b2z} and since $\frac{1}{f_t^c \xi(Z_t)}<\frac{t}{f_t^c \xi(Z_t)}<\frac{t}{a_t f_t^{c+1}}=o(1)$ 
as $t\to\infty$ on the event $\mathcal{E}_t^{\ssup 1}$.
\end{proof}
\bigskip


\section{One-dimensional case}

\label{s_1dim}

Throughout this section we assume that $d=1$ and $\alpha\in (1, 2)$, although we restate that in 
the main propositions. As there is only one shortest path between the origin and $Z_t$ we have 
$N(Z_t)=1$. 
\smallskip

In the following proposition, we show that $U(t)$ can be well approximated by the contribution of the unique shortest path from the origin to $Z_t$. The proof relies on Proposition~\ref{p_shortlong} and, in addition, exploits a specifically one-dimensional feature: every other path from the origin to $Z_t$ must pass through all the sites visited by the shortest path. This allows us to compare directly the contribution of an arbitrary path with that of the shortest path.

\begin{prop}
\label{p_u1dim}
Let $d=1$ and $\alpha\in (1,2)$. 
Almost surely on the event $\mathcal{E}_t^{\rm loc}\cap\mathcal{E}_t^{\ssup 1}\cap\mathcal{E}_t^{\ssup 2}$, as $t\to\infty$, 
\begin{align}
\label{pro1}
U(t)=e^{t\xi(Z_t)-|Z_t|\log\xi(Z_t)-2t}\prod_{j=0}^{|Z_t|-1}\Big(1-\frac{\xi(y_j^{\diamond})}{\xi(Z_t)}\Big)^{-1}+o\big(U(t)\big),
\end{align}
where $y^{\diamond}$ is the unique shortest path from the origin to $Z_t$. 
\end{prop}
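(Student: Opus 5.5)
We reduce everything to the single shortest path $y^\diamond$ and then show that all other paths are negligible relative to it. By~\eqref{comploc0}, on $\mathcal{E}_t^{\rm loc}$ we have $U(t)=U_0(t)+o(U(t))$, and by~\eqref{deco}
\begin{align*}
U_0(t)=U(t,y^\diamond)+\sum_{(m,w,s)\neq 0}\ \sum_{y\in\mathcal{P}_{t,m,w,s}}U(t,y).
\end{align*}
Since $N(Z_t)=1$ and $\mathcal{P}_t^{\rm min}=\{y^\diamond\}$, Proposition~\ref{p_shortlong}(1) gives
\begin{align*}
U(t,y^\diamond)=e^{t\xi(Z_t)-2t}\prod_{j=0}^{|Z_t|-1}\frac{1}{\xi(Z_t)-\xi(y^\diamond_j)}+o(U(t)).
\end{align*}
Pulling out $\xi(Z_t)^{-|Z_t|}$ produces exactly the main term in~\eqref{pro1}, which we denote by $M_t$. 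It therefore remains to show that the contribution of non-shortest paths is $o(M_t)+o(U(t))$.

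For $y\in\mathcal{P}_{t,m,w,s}$ with $(m,w,s)\neq0$, we apply Proposition~\ref{p_shortlong}(2). The key one-dimensional observation is that any path from $0$ to $Z_t$ passes through every site $y^\diamond_0,\dots,y^\diamond_{|Z_t|-1}$, so each of them appears at least once among the indices $i$ with $y_i\neq Z_t$. The product in~\eqref{long} thus contains the full product over $y^\diamond$ as a sub-product. The remaining $\ell(y)-m-|Z_t|=2w+s$ factors are each bounded by $1/(\xi(Z_t)-\xi(y_i))\le 1/(f_t\xi(Z_t))$ on $\mathcal{E}_t^{\ssup 2}$. Here we use that every site of $y\in\mathcal{P}_t$ lies within distance $R_t$ of the origin, so that $\xi(y_i)<(1-f_t)\xi(Z_t)$. (If $R_t$ in $\mathcal{E}_t^{\ssup 2}$ is meant for $|z|<R_t$, paths of length $<R_t$ only reach such $z$, so this is fine.) Hence
\begin{align*}
U(t,y)\le e^{t\xi(Z_t)-2t}\prod_{j=0}^{|Z_t|-1}\frac{1}{\xi(Z_t)-\xi(y^\diamond_j)}\cdot\frac{t^m}{m!\,(f_t\xi(Z_t))^{2w+s}},
\end{align*}
and summing over all paths gives
\begin{align*}
\sum_{(m,w,s)\neq0}\sum_{y\in\mathcal{P}_{t,m,w,s}}U(t,y)\le \bigl(M_t+o(U(t))\bigr)\cdot\frac{1}{N(Z_t)}\sum_{(m,w,s)\neq0}\frac{t^m}{(f_t\xi(Z_t))^{2w+s}m!}|\mathcal{P}_{t,m,w,s}|.
\end{align*}
By Lemma~\ref{l_npaths} with $c=1$, the right-hand side is $o(M_t)+o(U(t))$ on $\mathcal{E}_t^{\ssup 1}$. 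This uses $\alpha<2$ through Lemma~\ref{l_b2z}.

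Combining the pieces gives $U(t)=M_t(1+o(1))+o(U(t))$, hence $M_t=U(t)(1+o(1))$, which is~\eqref{pro1}. The main point needing care is the sub-product domination. A path may visit some sites of $y^\diamond$ several times, and it may revisit $Z_t$ (these visits are excluded from the product and accounted for by $t^m/m!$). We therefore have to check that the counts still match: among the $\ell(y)-m$ non-$Z_t$ indices, we choose one occurrence of each $y^\diamond_j$ and bound the remaining $2w+s$ factors crudely. Once this is verified, the rest is a direct application of the earlier results.
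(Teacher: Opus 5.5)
Your proposal is correct and follows the paper's proof essentially step for step. You bound each non-shortest path via \eqref{long} by keeping one visit to each site of $y^\diamond$ and estimating the remaining $2w+s$ factors by $1/(f_t\xi(Z_t))$ on $\mathcal{E}_t^{\ssup 2}$, sum using Lemma~\ref{l_npaths}, and treat $y^\diamond$ via \eqref{short} together with \eqref{deco} and \eqref{comploc0}. The $o(U(t))$ you attach to $M_t$ in the summed bound is unnecessary, since $e^{t\xi(Z_t)-2t}\prod_j(\xi(Z_t)-\xi(y^\diamond_j))^{-1}$ equals $M_t$ exactly, but it does no harm.
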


\begin{proof} 
Let $y\in \mathcal{P}_{t, m, w, s}$. Since $d=1$, $y$ passes through  all points between zero and $Z_t$, allowing us to compare its contribution with that of $y^{\diamond}$. Using~\eqref{long} in Proposition~\ref{p_shortlong}, keeping only those visits and estimating the other terms in the product by $\xi(Z_t)-\xi(y_j)>f_t \xi(Z_t)$ on $\mathcal{E}_t^{\ssup 2}$, we obtain 
\begin{align*}
U(t,y)
&\le  e^{t\xi(Z_t)-2t}\frac{t^m}{m!} \frac{1}{(f_t \xi(Z_t))^{2w+s}}\prod_{j=0}^{|Z_t|-1}\frac{1}{\xi(Z_t)-\xi(y_j^{\diamond})}\\
&=e^{t\xi(Z_t)-|Z_t|\log \xi(Z_t)-2t}\frac{t^m}{m!} \frac{1}{(f_t \xi(Z_t))^{2w+s}}\prod_{j=0}^{|Z_t|-1}\Big(1-\frac{\xi(y_j^{\diamond})}{\xi(Z_t)}\Big)^{-1}.
\end{align*} 
Summing over all paths except the shortest one, we obtain using Lemma~\ref{l_npaths} on 
$\mathcal{E}_t^{\ssup 1}$
\begin{align*}
\sum_{y\in \mathcal{P}_t, y\neq y^{\diamond}} U(t, y) 
&\le e^{t\xi(Z_t)-|Z_t|\log \xi(Z_t)-2t}\prod_{j=0}^{|Z_t|-1}\Big(1-\frac{\xi(y_j^{\diamond})}{\xi(Z_t)}\Big)^{-1}
\sum_{(m,w,s)\neq 0}\sum_{y\in \mathcal{P}_{t,m,w,s}}\frac{t^m}{m!} \frac{1}{(f_t \xi(Z_t))^{2w+s}}\\
&\le e^{t\xi(Z_t)-|Z_t|\log \xi(Z_t)-2t}\prod_{j=0}^{|Z_t|-1}\Big(1-\frac{\xi(y_j^{\diamond})}{\xi(Z_t)}\Big)^{-1}
\sum_{(m,w,s)\neq 0}\frac{t^m}{m!} \frac{1}{(f_t \xi(Z_t))^{2w+s}}|\mathcal{P}_{t,m,w,s}|\\
&=e^{t\xi(Z_t)-|Z_t|\log \xi(Z_t)-2t}\prod_{j=0}^{|Z_t|-1}\Big(1-\frac{\xi(y_j^{\diamond})}{\xi(Z_t)}\Big)^{-1}o(1).
\end{align*}
On the other hand, it follows from~\eqref{short} in Proposition~\ref{p_shortlong} that on the event 
$\mathcal{E}_t^{\rm loc}\cap\mathcal{E}_t^{\ssup 1}\cap \mathcal{E}_t^{\ssup 2}$
\begin{align*}
U(t, y^{\diamond})=e^{t\xi(Z_t)-|Z_t|\log \xi(Z_t)-2t}\prod_{j=0}^{|Z_t|-1}\Big(1-\frac{\xi(y_j^{\diamond})}{\xi(Z_t)}\Big)^{-1}+o\big(U(t)\big), 
\end{align*}
and the claim follows from~\eqref{deco} and~\eqref{comploc0}. 
\end{proof}

Our next goal is to study the product terms appearing in~\eqref{pro1}. Observe that for all $j$
\begin{align*}
\xi(y_j^{\diamond})=\Psi_t(y_j^{\diamond})+q\frac{|y_j^{\diamond}|}{t}\log t<\Psi_t(Z_t)+q\frac{|Z_t|}{t}\log t=\xi(Z_t)
\end{align*}
since $|y_j^{\diamond}|<|Z_t|$ and since $Z_t$ is the maximiser of $\Psi_t$. Hence 
\begin{align*}
\prod_{j=0}^{|Z_t|-1}\Big(1-\frac{\xi(y_j^{\diamond})}{\xi(Z_t)}\Big)^{-1}
=\exp\Big\{\beta_t\sum_{j=0}^{|Z_t|-1}\zeta_t(y_j^{\diamond})\Big\},
\end{align*}
where $\zeta_t$ was defined in~\eqref{zeta}. It will, however, be more convenient to 
consider an approximately centred version
of this sum, namely, 
\begin{align*}
S_t=\beta_t\sum_{j=0}^{|Z_t|-1}\big[\zeta_t (y^{\diamond}_j) -\mu\big].
\end{align*}
Our main task is to understand the limiting behaviour of $S_t$ conditionally on $(X_t, Y_t)$, and 
to do so we introduce the conditional characteristic function of $S_t$ defined by 
\begin{align*}
\psi_t(s)=\Et \exp\{is S_t\}, \qquad s\in \R.
\end{align*}

As we want to describe the conditional limit of $S_t$ in terms of a time-inhomogeneous Poisson process, 
we define, for any $a, b>0$  such that $b-q a>0$, its intensity measure 
\begin{align*}
L_{a, b}(dx\otimes dv)=\frac{\alpha e^{-v}}{b^{\alpha}(1-e^{-v})^{\alpha+1}}
dx\otimes dv 
\Big|_{D_{a,b}},
\end{align*}
where 
\begin{align*}
D_{a,b}=\Big\{(x,v)\in (0,a)\times (0,\infty): v<\log\frac{b}{q(a-x)}\Big\}.
\end{align*}

\begin{lemma} 
\label{l_levy}
$L_{a,b}$ is indeed an intensity measure, and therefore
\begin{align*}
\phi_{a,b}(s)=\exp\Big\{\int_{D_{a,b}}(e^{isv}-1-isv)L_{a,b}(dx\otimes dv)\Big\}
\end{align*}
is the characteristic function of a random variable $W_{a,b}$ that is the value at time $a$ of a time-inhomogeneous compensated Poisson process with 
zero drift, no Brownian component and the intensity measure $L_{a,b}$. 
\end{lemma}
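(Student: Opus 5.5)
The plan is to check that $L_{a,b}$ is a $\sigma$-finite measure on $D_{a,b}$ whose integral condition makes the compensated Poisson integral well defined. For that we need
\begin{align*}
\int_{D_{a,b}}\min\{v,v^2\}\,L_{a,b}(dx\otimes dv)<\infty .
\end{align*}
Actually the weaker condition $\int(v^2\wedge v)\,dL<\infty$ already suffices for the compensated form $e^{isv}-1-isv$. Since $|e^{isv}-1-isv|\le \min\{s^2v^2/2,\,2|s|v\}$, this condition makes the exponent in $\phi_{a,b}$ finite.

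First, the density is positive and continuous on $D_{a,b}$, so $L_{a,b}$ is a Radon measure on the open set $D_{a,b}$. Near $v=0$, for fixed $x$, we have
\begin{align*}
\frac{e^{-v}}{(1-e^{-v})^{\alpha+1}}\sim v^{-\alpha-1}, \qquad \alpha\in(1,2).
\end{align*}
Hence $\int_0 v^2 v^{-\alpha-1}\,dv<\infty$ because $\alpha<2$, while $\int_0 v\cdot v^{-\alpha-1}\,dv=\infty$. This is exactly why the compensator $-isv$ is needed, and why the small jumps are controlled by the square.

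Second, we need the large $v$ and the upper cutoff. The constraint $v<\log\frac{b}{q(a-x)}$ is only restrictive when $x>a-b/q$. The hypothesis $b-qa>0$ gives $b/(qa)>1$, so $\log\frac{b}{q(a-x)}>\log\frac{b}{qa}>0$ for every $x\in(0,a)$, which means every $x$-fibre is a nonempty interval. As $x\uparrow a$ the cutoff tends to infinity, so large $v$ do occur. There the density decays like $\alpha b^{-\alpha}e^{-v}$, and for each $x$,
\begin{align*}
\int_1^{V(x)} v\,e^{-v}\,dv\le \int_1^\infty ve^{-v}\,dv<\infty .
\end{align*}
Integrating over $x\in(0,a)$, a bounded interval, gives a finite total. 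Combining the two regions, we get
\begin{align*}
\int_{D_{a,b}}\min\{v,v^2\}\,dL_{a,b}\le C\,a\,b^{-\alpha}\Big(\int_0^1 v^{1-\alpha}\,dv+\int_1^\infty ve^{-v}\,dv\Big)<\infty .
\end{align*}

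Third, we identify the process. Let $\nu_x(dv)$ be the section of $L_{a,b}$ at time $x$, with the density restricted to $v<V(x)$. The time-inhomogeneous process is defined by its characteristic exponent $\int_0^a\int(e^{isv}-1-isv)\,\nu_x(dv)\,dx$. This is a standard construction: it is the compensated Poisson random measure integral $\int v\,(N-L)$ on $D_{a,b}$, built as the $L^2$ limit over $v>\varepsilon$ plus the $L^1$-compensated large jumps. It has independent increments, and its value at time $a$ has characteristic function $\phi_{a,b}$ by the exponential formula for Poisson random measures (Campbell's theorem), applied with truncation and then passing to the limit by dominated convergence using the bound above.

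The only genuine obstacle is the small-jump integrability, which uses $\alpha<2$. The rest is routine.
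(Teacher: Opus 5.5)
Your proof is correct and follows the same route as the paper: verify an integrability condition for $L_{a,b}$, with the small jumps controlled by $\alpha<2$. The paper does this after substituting $u=1-e^{-v}$, which turns the density into $\alpha b^{-\alpha}u^{-\alpha-1}$ on $(0,1-q(a-x)/b)$. The one difference is in your favour. The paper only checks $\int\min\{1,v^2\}\,dL_{a,b}<\infty$, whereas your condition $\int\min\{v,v^2\}\,dL_{a,b}<\infty$ is what the fully compensated exponent $e^{isv}-1-isv$ actually requires, and your bound via $\int_1^\infty ve^{-v}\,dv<\infty$ supplies the first-moment control of the large jumps that the paper leaves implicit.
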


\begin{proof} 
Substituting $v=-\log(1-u)$ we obtain 
\begin{align*}
\int_{D_{a,b}}\min\{1, v^2\}L_{a,b}(dx\otimes dv)
&=\frac{1}{b^{\alpha}}\int_0^a \int_0^{1-\frac{q(a-x)}{b}}\!\!\!\!\!\!\!\!
\min\{1, \log^2(1-u)\}
\frac{\alpha}{u^{\alpha+1}}du dx\\
&\le \frac{a}{b^{\alpha}} \int_0^1
\min\{1, \log^2(1-u)\}
\frac{\alpha}{u^{\alpha+1}}du<\infty
\end{align*}
since $\alpha<2$. Since the integrability condition is satisfied,  $L_{a,b}$ is indeed an intensity measure, which also implies the rest of the 
statement.
\end{proof}

In addition to the time-inhomogeneous compensated Poisson process above, we also need the shift term 
\begin{align*}
\gamma_{a,b}= \int_0^{a}\int_{0}^{\infty}\frac{\alpha}{y^{\alpha+1}}
\Big[\frac{y}{b}+\log\Big(1-\frac{y}{b}\Big)\1_{\{y-q x<b-q a\}}\Big]dydx
\end{align*}
to describe the limiting behaviour of $S_t$. 
With all the ingredients now in place, we show in the proposition below that, conditionally on 
$(X_t, Y_t)$, the distribution of $S_t$ is approximately that of the shifted 
 time-inhomogeneous compensated Poisson process $W_{|X_t|, Y_t}-\gamma_{|X_t|, Y_t}$.
\smallskip

We work on the typical event
\begin{align*}
\mathcal{E}_{t,\delta}=\Big\{Y_t>\delta, 1-q\frac{|X_t|}{Y_t}>\delta, 
\frac{Y_t^{\alpha}}{|X_t|}>\delta\Big\}, \qquad \delta>0,
\end{align*}
which causes no difficulty, since Proposition~\ref{events} shows that its probability can be made arbitrarily close to $1$ by choosing $\delta$ sufficiently small. Unlike the other typical events used throughout the paper, however, here it is important to work with a fixed boundary $\delta$, rather than with the moving boundaries $f_t$ and $g_t$.

\begin{prop} 
\label{p_psi}
For any $\delta>0$ and any $s\in \R$ 
\begin{align*}
\psi_t(s)= \phi_{|X_t|, Y_t}(s)\exp\{-is\gamma_{|X_t|, Y_t}+o(1)\}
\end{align*}
as $t\to\infty$ almost surely on $\mathcal{E}_{t, \delta}$.
\end{prop}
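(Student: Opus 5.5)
Conditionally on $\mathcal{G}_t$, the sites $y_j^\diamond$, $0\le j\le |Z_t|-1$, are distinct and different from $Z_t$. Their potentials are independent, but each is conditioned on the event that $Z_t$ maximises $\Psi_t$. In $d=1$ this conditioning says $\Psi_t(z)<\Psi_t(Z_t)$ for every $z\neq Z_t$, i.e.\ $\xi(z)<\xi(Z_t)-q\frac{|Z_t|-|z|}{t}\log t$ for sites on the path (and a harmless constraint elsewhere). Hence, under $\Pt$, the values $\xi(y_j^\diamond)$ are independent Pareto variables conditioned on $\xi(y_j^\diamond)<c_j:=\xi(Z_t)-q\frac{|Z_t|-j}{t}\log t$. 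The plan is to factorise
\begin{align*}
\psi_t(s)=\prod_{j=0}^{|Z_t|-1}\Et\exp\big\{is\beta_t[\zeta_t(y_j^\diamond)-\mu]\big\},
\end{align*}
take logarithms, and write each factor as $1+\epsilon_j$ with $\epsilon_j$ small. Then $\log\psi_t(s)=\sum_j\epsilon_j+O(\sum_j|\epsilon_j|^2)$, and I would show $\sum_j|\epsilon_j|^2\to0$. This should follow from $|\epsilon_j|\lesssim |s|\beta_t$ times an integrable quantity, together with $|Z_t|\beta_t^2\to0$ from Lemma~\ref{l_b2z}.

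For the main term, write the conditioned law as $\alpha y^{-\alpha-1}dy/(1-c_j^{-\alpha})$ on $(1,c_j)$. Recall $\zeta_t=-\xi(Z_t)\log(1-\xi/\xi(Z_t))$ (with $p=0$, so no truncation). Substitute $y=\xi(Z_t)u$ and $v=-\log(1-u)$, so that $\beta_t\zeta_t=v$. Then
\begin{align*}
\epsilon_j=\int (e^{isv}-1)\,\text{law}(dv)-is\beta_t\mu .
\end{align*}
Adding and subtracting $isv$ splits this as $\int(e^{isv}-1-isv)\,\text{law}(dv)+is\,\beta_t(\Et\zeta_t(y_j^\diamond)-\mu)$.

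In the first part, the law of $v$ on $v\gtrsim\beta_t$ has density $\approx \xi(Z_t)^{-\alpha}\alpha e^{-v}(1-e^{-v})^{-\alpha-1}$, restricted to $1-e^{-v}<c_j/\xi(Z_t)$. This restriction is $v<\log\frac{\xi(Z_t)}{q(|Z_t|-j)\log t/t}$. Rescale $j=r_t x$, so $\xi(Z_t)=a_tY_t$ and $q\frac{|Z_t|-j}{t}\log t=a_t q(|X_t|-x)$ by \eqref{ra} and the definitions of $a_t,r_t$. The bound becomes $v<\log\frac{Y_t}{q(|X_t|-x)}$, which is exactly $D_{|X_t|,Y_t}$. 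Also $\xi(Z_t)^{-\alpha}=a_t^{-\alpha}Y_t^{-\alpha}$ and $a_t^{-\alpha}=r_t^{-d}$, so summing over $j$ gives a Riemann sum in $x$. Thus the sum converges to $\int_D(e^{isv}-1-isv)L_{|X_t|,Y_t}$. The small-$v$ region, corresponding to $y$ of order one, contributes $O(s^2\beta_t^2)$ per site, which is $o(1)$ in total by Lemma~\ref{l_b2z}.

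In the second part, $\beta_t\sum_j(\Et\zeta_t-\mu)$ must be computed to order $o(1)$. The constant $\mu=\int y\,\alpha y^{-\alpha-1}dy$ is the unconditioned mean of $\xi$. Write $\zeta_t=\xi-\xi(Z_t)[\frac{\xi}{\xi(Z_t)}+\log(1-\frac{\xi}{\xi(Z_t)})]$. Then the difference per site is
\begin{align*}
-\int_{c_j}^\infty y\,\alpha y^{-\alpha-1}dy-\xi(Z_t)\int_1^{c_j}\Big[\tfrac{y}{\xi(Z_t)}+\log\big(1-\tfrac{y}{\xi(Z_t)}\big)\Big]\alpha y^{-\alpha-1}dy,
\end{align*}
plus normalisation corrections of order $c_j^{-\alpha}$. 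Multiplying by $\beta_t$, rescaling as before, and summing over $j$ gives $-\gamma_{|X_t|,Y_t}$. The indicator $y-qx<b-qa$ encodes $y<c_j$ after scaling, and the lower limit $1$ may be replaced by $0$, since the integrand is $O(y^2)$ there and $\alpha<2$.

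The main obstacle is making all the Riemann-sum and tail approximations uniform, with error $o(1)$ and not merely $o(a_t)$. One issue is the endpoint $j$ near $|Z_t|$, where $c_j\to\xi(Z_t)$ and $v$ is unbounded, producing a logarithmic singularity. Another is the normalisation factor $1/(1-c_j^{-\alpha})$. The event $\mathcal{E}_{t,\delta}$ is used precisely here: $Y_t>\delta$ and $1-q|X_t|/Y_t>\delta$ keep every $c_j\ge\delta a_t$ and bounded away from $0$ relative to $\xi(Z_t)$, and $Y_t^\alpha/|X_t|>\delta$ keeps the total mass of $L$ bounded. I would dominate the integrands by $\min\{1,v^2\}$ times the density, as in Lemma~\ref{l_levy}, and bound the endpoint log-singularity by an integrable $\log^2$ term. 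With these bounds, dominated convergence applied to the Riemann sums gives the claim.
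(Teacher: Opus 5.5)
Your proposal is correct and follows essentially the same route as the paper: factorise $\psi_t(s)$ over the sites of $y^\diamond$ using conditional independence given $\mathcal{G}_t$, compute each factor via the substitution $y=\xi(Z_t)u$, linearise the logarithm at a cost of $O(\beta_t^2)$ per site, replace the lower integration limit by $0$, and pass to Riemann integrals using the bounds supplied by $\mathcal{E}_{t,\delta}$. The difference is only bookkeeping: you compensate with $v=-\log(1-u)$ from the outset and compute the drift $\beta_t\sum_j(\Et\zeta_t(y_j^\diamond)-\mu)$ directly, whereas the paper first compensates with the linear term $u$ and switches to the $v$-compensator at the end, which is where $\gamma_{|X_t|,Y_t}$ appears. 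One minor point is that Lemma~\ref{l_b2z} is stated on $\mathcal{E}_t^{\ssup 1}$ rather than $\mathcal{E}_{t,\delta}$, but the bound you need holds directly on $\mathcal{E}_{t,\delta}$, since $|Z_t|\beta_t^2=|X_t|Y_t^{-2}r_t^{1-2/\alpha}\le (q\delta)^{-1}r_t^{1-2/\alpha}\to0$; the paper equivalently uses $\beta_t^2=o(\beta_t^\alpha)$ together with boundedness of $|Z_t|\beta_t^\alpha=|X_t|Y_t^{-\alpha}$.
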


\begin{proof} It is easy to see that, 
conditionally on $\mathcal{G}_t$, the random variables $\{\xi(z): z\neq Z_t\}$ are 
independent and each $\xi(z)$ has Pareto distribution with parameter $\alpha$ conditioned on 
$\Psi_t(z)<\Psi_t(Z_t)$, which is equivalent to $\xi(z)<\xi(Z_t)-\frac{|Z_t|-|z|}{t}q\log t$. 
For each $t$ and $z$, denote
\begin{align}
\label{rho0}
\rho_{t,z}=\beta_t\Big[\xi(Z_t)-\frac{|Z_t|-|z|}{t}q\log t\Big].
\end{align}
Observe that according to~\eqref{ra}
\begin{align*}
\rho_{t,z}=1-\frac{q}{Y_t}\Big(|X_t|-\frac{|z|}{r_t}\Big),
\end{align*}
which on the event $\mathcal{E}_{t,\delta}$  implies 
for each $z$ such that $ |z|< |Z_t|$ and $z Z_t\ge 0$
\begin{align}
\label{d5}
1> \rho_{t,z}\ge 1-q\frac{|X_t|}{Y_t}>\delta. 
\end{align}

First, we compute the normalising factor of the conditioning to be 
\begin{align}
\label{p1}
\int_1^{\infty}\frac{\alpha  dy}{y^{\alpha+1}}\1_{\{y-\frac{|z|}{t}q\log t<\Psi_t(Z_t)\}}
= \int_1^{\frac{\rho_{t,z}}{\beta_t}}\frac{\alpha dy}{y^{\alpha+1}}=1-\beta_t^{\alpha}
\rho_{t,z}^{-\alpha}.
\end{align}

Further, using~\eqref{p1} we get for any $s\in \R$
\begin{align}
\Et \exp\big\{is\beta_t\zeta_t(z)\big\}
&=\big(1-\beta_t^{\alpha}\rho_{t,z}^{-\alpha}\big)^{-1}
\int_1^{\frac{\rho_{t,z}}{\beta_t}}\frac{\alpha}{y^{\alpha+1}}
\exp\big\{-is\log\big(1-\beta_t y\big)\big\}dy\notag\\
&=\big(1-\beta_t^{\alpha}\rho_{t,z}^{-\alpha}\big)^{-1}\beta_t^{\alpha}
\int_{\beta_t}^{\rho_{t,z}}\frac{\alpha}{y^{\alpha+1}}(1-y)^{-is}dy. 
\label{p3}
\end{align}

We will use the Taylor expansion for $(1-y)^{-is}$ with respect to $y$. 
For the first two terms we have 
\begin{align}
\label{in}
\beta_t^{\alpha}\int_{\beta_t}^{\rho_{t,z}}\frac{\alpha dy }{y^{\alpha+1}}
=1-\beta_t^{\alpha}\rho_{t,z}^{-\alpha}
\qquad\text{ and }\qquad
\beta_t^{\alpha}\int_{\beta_t}^{\rho_{t,z}}\frac{\alpha dy}{y^{\alpha}}
=\mu \beta_t-\mu \beta_t^{\alpha}\rho_{t,z}^{1-\alpha}.
\end{align}

For the remainder  term we have uniformly in $z$
\begin{align}
\label{rem}
\beta_t^{\alpha}\int_{\beta_t}^{\rho_{t,z}}\frac{\alpha }{y^{\alpha+1}}\Big[(1-y)^{-is}-1-isy\Big]dy
=\beta_t^{\alpha}\int_{0}^{\rho_{t,z}}\frac{\alpha }{y^{\alpha+1}}\Big[(1-y)^{-is}-1-isy\Big]dy
+o(\beta_t^{\alpha}).
\end{align}

Substituting~\eqref{in} and~\eqref{rem} into~\eqref{p3} we get uniformly in $z$ on the event $\mathcal{E}_{t,\delta}$,
using uniform boundedness~\eqref{d5} of $\rho_{t,z}$ and $\rho_{t,z}^{-1}$, 
as well as $\beta_t^2=o(\beta_t^{\alpha})$ according to $\alpha<2$, 
that
\begin{align*}
\Et & \exp\big\{is\beta_t\zeta_t(z)\big\}
=\big(1-\beta_t^{\alpha}\rho_{t,z}^{-\alpha}\big)^{-1}
\Big(1-\beta_t^{\alpha}\rho_{t,z}^{-\alpha}
+is\mu \beta_t-is\mu \beta_t^{\alpha}\rho_{t,z}^{1-\alpha}
\\&\phantom{aaaaaaaaaaaaaaaaa}
+\beta_t^{\alpha}\int_{0}^{\rho_{t,z}}\frac{\alpha }{y^{\alpha+1}}\Big[(1-y)^{-is}-1-isy\Big]dy
+o(\beta_t^{\alpha})\Big)\\
&=1
+is\mu \beta_t-is\mu \beta_t^{\alpha}\rho_{t,z}^{1-\alpha}
+\beta_t^{\alpha}\int_{0}^{\rho_{t,z}}\frac{\alpha }{y^{\alpha+1}}\Big[(1-y)^{-is}-1-isy\Big]dy
+o(\beta_t^{\alpha})\\
&=\exp\Big\{is\mu \beta_t-is\mu \beta_t^{\alpha}\rho_{t,z}^{1-\alpha}
+\beta_t^{\alpha}\int_{0}^{\rho_{t,z}}\frac{\alpha }{y^{\alpha+1}}\Big[(1-y)^{-is}-1-isy\Big]dy
+o(\beta_t^{\alpha})\Big\}.
\end{align*}

Since $\beta_t^{\alpha} |Z_t|=|X_t|Y_t^{-\alpha}$
is uniformly bounded on $\mathcal{E}_{t,\delta}$ and due to $\rho_{t,z}=\rho_{t,-z}$, 
this yields 
\begin{align}
\psi_t(s)
&=\prod_{j=0}^{|Z_t|-1}
\Big[\exp\big\{-is\mu\beta_t\big\}\Et  \exp\big\{is\beta_t\zeta_t(y^{\diamond}_j)\big\}\Big]\notag\\
&=\exp\Big\{-is\mu \beta_t^{\alpha}\sum_{z=0}^{|Z_t|-1}\rho_{t,z}^{1-\alpha}
+\beta_t^{\alpha}\sum_{z=0}^{|Z_t|-1}\int_{0}^{\rho_{t,z}}\frac{\alpha }{y^{\alpha+1}}\Big[(1-y)^{-is}-1-isy\Big]dy
+o(1)\Big\}.
\label{d6}
\end{align}

Observe that on the event $\mathcal{E}_{t,\delta}$ the function $x\mapsto 1-\frac{q}{Y_t}(|X_t|-x)$ has a uniformly bounded derivative. Hence we can pass to the integral limits in~\eqref{d6}
using $\beta_t^{\alpha}=Y_t^{-\alpha} \frac{1}{r_t}$, 
$Y_t$ being bounded away from zero, and~\eqref{d5} on the event $\mathcal{E}_{t,\delta}$. We  obtain 

\begin{align}
\psi_t(s)
&=\exp\Big\{-\frac{is\mu}{Y_t^{\alpha}} \cdot \frac{1}{r_t}\sum_{z=0}^{|Z_t|-1}\rho_{t,z}^{1-\alpha}
+\frac{1}{Y_t^{\alpha}} \cdot \frac{1}{r_t}\sum_{z=0}^{|Z_t|-1}\int_{0}^{\rho_{t,z}}\frac{\alpha }{y^{\alpha+1}}\Big[(1-y)^{-is}-1-isy\Big]dy
+o(1)\Big\}\notag\\
&=\exp\Big\{-\frac{is\mu}{Y_t^{\alpha}} \int_{0}^{|X_t|} 
\Big[1-\frac{q}{Y_t}(|X_t|-x)\Big]^{1-\alpha}dx\notag\\
&\phantom{aaaaaaa}+\frac{1}{Y_t^{\alpha}} \int_0^{|X_t|}\int_{0}^{1-\frac{q}{Y_t}(|X_t|-x)}\frac{\alpha }{y^{\alpha+1}}\Big[(1-y)^{-is}-1-isy\Big]dydx
+o(1)\Big\}.
\label{d7}
\end{align}

For the first term, we have 
\begin{align}
\mu \int_{0}^{|X_t|} \Big[1-\frac{q}{Y_t}(|X_t|-x)\Big]^{1-\alpha}dx
=\int_{0}^{|X_t|}\int_{1-\frac{q}{Y_t}(|X_t|-x)}^{\infty} 
\frac{\alpha y}{y^{\alpha+1}}dydx.
\label{d8}
\end{align}

In order to proceed with the change of variables $v=-\log(1-y)$ in the second term of~\eqref{d7}, we 
would like to replace the linear term $y$ by $-\log(1-y)$, which performs similar compensation around zero but leads to an extra drift term away from zero. Together with~\eqref{d8}
this leads to 

\begin{align*}
\psi_t(s)
&=\exp\Big\{-\frac{is}{Y_t^{\alpha}} \int_{0}^{|X_t|}\int_{0}^{\infty} 
\frac{\alpha}{y^{\alpha+1}}\Big[y+\log(1-y)\1_{\big\{y<1-\frac{q(|X_t|-x)}{Y_t}\big\}}\Big]dydx\notag\\
&\phantom{aaaaaaa}+\frac{1}{Y_t^{\alpha}} \int_0^{|X_t|}\int_{0}^{1-\frac{q}{Y_t}(|X_t|-x)}\frac{\alpha }{y^{\alpha+1}}\Big[(1-y)^{-is}-1+is\log(1-y)\Big]dydx
+o(1)\Big\}.
\end{align*}

Rescaling $y$ in the first term and changing $y$ to $v$ in the second term, we obtain 
\begin{align*}
\psi_t(s)
&=\exp\Big\{-is\gamma_{|X_t|, Y_t}
+ \int_{D_{|X_t|, Y_t}}(e^{isv}-1-isv)L_{|X_t|, Y_t}(dx\otimes dv)
+o(1)\Big\},
\end{align*}
which completes the proof. 
\end{proof}


\begin{proof}[Proof of Theorem~\ref{th_1}] It follows from Proposition~\ref{p_u1dim}
that on the event $\mathcal{E}_t^{\rm loc}\cap\mathcal{E}_t^{\ssup 1}\cap \mathcal{E}_t^{\ssup 2}$
\begin{align*}
U(t)=\exp\Big\{t\xi(Z_t)-|Z_t|\log \xi(Z_t)-2t+\mu\frac{|Z_t|}{\xi(Z_t)}
+S_t\Big\}+o\big(U(t)\big).
\end{align*}
Further,  on the event $\{Z_t=Z_t^*\}\cap \mathcal{E}_t^{\ssup 1}$ we have 
\begin{align*}
t\xi(Z_t)-|Z_t|\log \xi(Z_t)=t\Psi_t^*(Z_t^*). 
\end{align*}
This implies~\eqref{asymp} with $\chi_t=S_t+o(1)$ on $\mathcal{E}_t^{\rm loc}\cap\mathcal{E}_t^{\ssup 1}\cap \mathcal{E}_t^{\ssup 2}\cap \{Z_t=Z_t^*\}$. 
\smallskip

We have  $\text{\rm P}(\mathcal{E}_t^{\rm loc})\to 1$ by~\eqref{comploc0}, 
$\text{\rm P}(Z_t=Z_t^*)\to 1$ by Proposition~\ref{zz} and
$\text{\rm P}(\mathcal{E}_t^{\ssup 1}\cap \mathcal{E}_t^{\ssup 2})\to 1$ by Proposition~\ref{events}. 
Also by Proposition~\ref{zz} we know that $\Psi_t^*(Z_t^*)=(1+o(1))\Psi_t(Z_t)$
with probability converging to $1$. 
Hence it remains to 
show that $(X_t, \Upsilon_t, Y_t, S_t)$ converges in distribution to a random variable
$(X, \Upsilon, \Upsilon + q|X|,\chi)$, 
and that $(X, \Upsilon, \chi)$ has a positive density on $\R\times (0,\infty)\times \R$.
\smallskip 

Due to the deterministic relation $Y_t=\Upsilon_t+q|X_t|$ following from~\eqref{ra}, it 
will be more convenient to denote $Y=\Upsilon +q|X|$ and prove an equivalent statement, namely, that 
$(X_t, Y_t, S_t)$ converges in distribution to 
$(X,  Y,\chi)$, 
and that $(X, Y, \chi)$ has a positive density on $\{(x,y): x\in \R, y>q|x|\}\times \R$. 
We denote the  expectation on the probability space of the limit random variable by $\Ep$.
\smallskip 

Consider the characteristic function of $(X_t, Y_t, S_t)$
\begin{align*}
\phi_t(x, y, s)=\text{\rm E} \exp\{i(x X_t+y Y_t+s S_t)\}
=\text{\rm E}\big[\exp\{i(x X_t+y Y_t)\}\psi_t(s)\big].
\end{align*}
Let $\e>0$ and choose $\delta$ according to Proposition~\ref{events} that ensures 
${\text P}(\mathcal{E}_{t,\delta})>1-\e$ eventually. By Proposition~\ref{p_psi} we have 
\begin{align}
\label{o12}
\Big|\phi_t(x, y, s)
-\text{\rm E}\big[\exp\big\{i(x X_t+y Y_t)-is\gamma_{| X_t|, Y_t}\big\}
\phi_{|X_t|, Y_t}(s)\big]\Big|\le 2\e+o(1).
\end{align}

Let $\chi$ be defined 
as $W_{|X|, Y}-\gamma_{|X|, Y}$ from Lemma~\ref{l_levy}.   
Using  $(X_t, Y_t)\Rightarrow (X,Y)$ by Proposition~\ref{p_new},   Lemma~\ref{l_levy} and 
boundedness and continuity of the 
function under the expectation, we obtain 
\begin{align*}
\lim_{t\to\infty}\text{\rm E}\big[\exp\big\{i(x X_t+y Y_t)-is\gamma_{|X_t|,  Y_t}\big\}
\phi_{|X_t|, Y_t}(s)\big]
&=\Ep\big[\exp\big\{i(x X+y Y)-is\gamma_{|X|, Y}\big\}
\phi_{|X|, Y}(s)\big]\\
&=\Ep \exp\{i(x X+y Y+s \chi)\}.
\end{align*}
Combining this with~\eqref{o12} we get
\begin{align*}
\lim_{t\to\infty}\phi_t(x, y, s)=\Ep \exp\{i(x X+y Y+s \chi)\}
\end{align*}
since $\e$ is arbitrary, thus implying convergence in distribution of $(X_t, Y_t, S_t)$ to $(X, Y, \chi)$. 
\smallskip

Finally, $(X, Y)$ 
has a positive density on $\{(x,y): x\in \R, y>q|x|\}$ by Proposition~\ref{p_new} and,  conditionally on $(X, Y)$, 
$\chi$ has positive density on $\R$ as a shifted time-inhomogeneous compensated Poisson process
with intensity measure $L_{|X|,Y}$. This implies that the triple $(X, Y, \chi)$ has positive density on 
the domain $\{(x,y): x\in \R, y>q|x|\}\times \R$. 
\end{proof}

\bigskip


\section{Preliminary estimates for $d\ge 2$}
\label{s_prelim}

We now turn to the multidimensional case, where the analysis is based on the centred and normalised polymer weights $H_t$ defined in~\eqref{ht}. We begin by deriving the asymptotic behaviour of the corresponding centring and normalisation terms.

\begin{lemma} 
\label{l_ml}
Let $d\ge 2$. Almost surely on the event $\mathcal{E}_t^{\ssup 1}$, as $t\to\infty$
\begin{align}
\label{mu}
\mu_{t}(z)&=\mu+O(\beta_t),\\
\lambda_{t,z}(c\beta_t) & = O(\beta_t^2)
\label{lambda}
\end{align}
uniformly for all $z\neq Z_t$ and $c\in \{1, 2\}$. 
\end{lemma}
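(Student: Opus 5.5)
The plan is to compute everything from the explicit $\mathcal{G}_t$-conditional law of $\xi(z)$, exactly as in the proof of Proposition~\ref{p_psi}. Conditionally on $\mathcal{G}_t$, each $\xi(z)$ with $z\neq Z_t$ is Pareto$(\alpha)$ conditioned on $\xi(z)<\rho_{t,z}/\beta_t$, with $\rho_{t,z}$ as in~\eqref{rho0}. The normalising constant is $1-\beta_t^\alpha\rho_{t,z}^{-\alpha}$ when $\rho_{t,z}/\beta_t>1$, and the site is degenerate otherwise. The indicator in~\eqref{zeta} with $p=1$ truncates the variable to $\xi(z)<(1-f_t)\xi(Z_t)$. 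On this range, $u=\beta_t\xi(z)\in[\beta_t,1-f_t)$, so
\[
0\le \zeta_t(z)=-\beta_t^{-1}\log(1-u)\le \xi(z)\,\frac{-\log(1-u)}{u}\le C\,\xi(z)\,\frac{\log(1/f_t)}{1}.
\]
More usefully, $\zeta_t(z)=\xi(z)\bigl(1+O(u)\bigr)$ for $u\le 1/2$, and $\zeta_t(z)\le \beta_t^{-1}\log(1/f_t)$ always. The key point is that, unlike in $d=1$, the truncation at $1-f_t$ removes the logarithmic singularity, so $\zeta_t(z)$ is a bounded-by-$\beta_t^{-1}\log(1/f_t)$ perturbation of a Pareto variable.

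For~\eqref{mu}, I will write
\[
\mu_t(z)-\mu=\Et\bigl[\zeta_t(z)-\xi(z)\bigr]+\bigl(\Et\xi(z)-\mu\bigr).
\]
The second term is the effect of conditioning. It is $O(\beta_t^{\alpha-1})$, since the removed tail $\int_{\rho/\beta_t}^\infty \alpha y^{-\alpha}\,dy\asymp \beta_t^{\alpha-1}\rho^{1-\alpha}$ and the normaliser shift is $O(\beta_t^\alpha)$. For the first term, I will split $\xi(z)\le 1/(2\beta_t)$ from $\xi(z)>1/(2\beta_t)$. On the first part, $|\zeta_t-\xi|\le C\beta_t\xi^2$, and $\int_1^{1/(2\beta_t)}\beta_t y^2\,\alpha y^{-\alpha-1}\,dy=O(\beta_t^{\alpha-1})$ since $\alpha<2$ (in $d\ge 2$ we only know $\alpha<2d$, and I return to this below). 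On the second part, the contribution is at most $\beta_t^{-1}\log(1/f_t)\,P(\xi>1/(2\beta_t))=O(\beta_t^{\alpha-1}\log(1/f_t))$.

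This is where the main obstacle appears. For $\alpha\in(d,2d)$ with $d\ge2$ we have $\alpha>2$, so the same splitting gives a bound of $O(\beta_t)$. Indeed, $\int \beta_t y^{2}y^{-\alpha-1}\,dy$ converges when $\alpha>2$, and the tail terms are $O(\beta_t^{\alpha-1}\log(1/f_t))=o(\beta_t)$ because $\alpha-1>1$ and $\log(1/f_t)$ grows only like $\log\log\log\log t$ while $\beta_t$ decays polynomially. So~\eqref{mu} follows. The uniformity in $z$ comes from $\rho_{t,z}$ being bounded below on $\mathcal{E}_t^{\ssup 1}$, since $1-q|X_t|/Y_t>f_t$; for sites with tiny $\rho_{t,z}$, however, I will instead just use the truncation, which keeps everything bounded by the same expressions.

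For~\eqref{lambda}, I will set $V=\zeta_t(z)-\mu_t(z)$ and use $e^{x}\le 1+x+x^2e^{|x|}$ with $x=c\beta_t V$. Since $\Et V=0$, this gives $\lambda_{t,z}(c\beta_t)\le \log\bigl(1+c^2\beta_t^2\,\Et[V^2e^{c\beta_t|V|}]\bigr)$. Because $\beta_t|V|\le \log(1/f_t)+O(\beta_t)$, the exponential factor is at most $C f_t^{-c}$. So it remains to bound $\Et V^2\le \Et\zeta_t(z)^2$ by $O(1)$ up to small factors. This needs $\alpha>2$: $\Et\zeta_t^2\le C\int_1^{\infty}y^2\alpha y^{-\alpha-1}\,dy+\beta_t^{-2}\log^2(1/f_t)\,\beta_t^{\alpha}$, which is bounded.

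The resulting $f_t^{-c}$ loss is the delicate point. To avoid it, I will split again at $\beta_t\xi\le 1/2$, where $e^{c\beta_t|V|}$ is bounded. On the complement, I will bound directly by $\Et e^{c\beta_t\zeta_t}\mathbf 1_{\{\beta_t\xi>1/2\}}\le f_t^{-c}P(\beta_t\xi>1/2)=O(f_t^{-2}\beta_t^{\alpha})=o(\beta_t^2)$, using $\alpha>2$ and the slow growth of $f_t^{-1}$. Jensen's inequality gives $\lambda\ge 0$, which completes the two-sided $O(\beta_t^2)$ bound, uniformly in $z$ and $c\in\{1,2\}$.
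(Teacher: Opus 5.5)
Your argument is correct, but it takes a different route from the paper's.

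\emph{The paper's route.} It computes both quantities exactly. After the substitution $s=\beta_t y$ it obtains
$\mu_t(z)=(1+o(\beta_t^2))\beta_t^{\alpha-1}[B(\beta_t)-B(\min\{1-f_t,\rho_{t,z}\})]$ and
$\Et e^{c\beta_t\zeta_t(z)}=(1+o(\beta_t^2))\beta_t^{\alpha}[\hat B_c(\beta_t)-\hat B_c(\min\{1-f_t,\rho_{t,z}\})]$,
where $B(x)=\int_x^1\alpha s^{-\alpha-1}[-\log(1-s)]\,ds$ and $\hat B_c(x)=\int_x^{1/2}\alpha s^{-\alpha-1}(1-s)^{-c}\,ds$. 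It then reads off the expansions of $B$ and $\hat B_c$ at $0$, and discards the upper-limit terms using $\rho_{t,z}>f_t$ and the truncation at $1-f_t$. This gives $\Et e^{c\beta_t\zeta_t(z)}=1+c\mu\beta_t+O(\beta_t^2)$. The bound \eqref{lambda} then follows only after cancelling $c\mu\beta_t$ against $c\beta_t\mu_t(z)$ using \eqref{mu}.

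\emph{Your route.} You compare $\zeta_t(z)$ with $\xi(z)$: a Taylor bound on $\{\beta_t\xi(z)\le 1/2\}$ and the crude bound $\zeta_t(z)\le\beta_t^{-1}\log(1/f_t)$ beyond it. For $\lambda$ you centre first and apply $e^x\le 1+x+x^2e^{|x|}$, so the first-order term vanishes identically. You then need only $\mu_t(z)=O(1)$, a finite second moment, a bulk/tail split, and Jensen for the lower bound.

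\emph{Comparison.} The paper's route reuses the template of Proposition~\ref{p_psi} and gives explicit expansions. Yours avoids special-function asymptotics, uses only the Pareto tail bound and $\alpha>d\ge 2$, and shows clearly where finite variance and the truncation at $1-f_t$ enter.

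Three details to tidy:
\begin{itemize}
\item On $\mathcal{E}_t^{\ssup 1}$ one has $\rho_{t,z}\ge 1-q|X_t|/Y_t>f_t$ for every $z$, so there are no ``tiny $\rho_{t,z}$'' sites to treat separately.
\item Your normaliser and mean-correction bounds carry extra factors $f_t^{-\alpha}$ and $f_t^{1-\alpha}$. These are harmless, since $\beta_t$ is polynomially small on $\mathcal{E}_t^{\ssup 1}$.
\item In the final split it is cleanest to split only the remainder $\Et[(c\beta_t V)^2e^{c\beta_t|V|}]$, because the linear term is exactly zero by $\Et V=0$. The tail piece is then $O(\beta_t^{\alpha}f_t^{-c}\log^2(1/f_t))=o(\beta_t^2)$. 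If you instead split $\Et e^{c\beta_t V}$ itself, you must also control $c\beta_t\Et[V\mathbf{1}_{\{\beta_t\xi(z)\le 1/2\}}]=-c\beta_t\Et[V\mathbf{1}_{\{\beta_t\xi(z)>1/2\}}]=O(\beta_t^{\alpha}\log(1/f_t))$, which is also $o(\beta_t^2)$.
\end{itemize}
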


\begin{proof} 
Similarly to the proof of Proposition~\ref{p_psi},
conditionally on $\mathcal{G}_t$,  $\{\xi(z): z\neq Z_t\}$ are 
independent and each such $\xi(z)$ has Pareto distribution with parameter $\alpha$ conditioned on 
$\Psi_t(z)<\Psi_t(Z_t)$. 
\smallskip

Observe that for $z\neq Z_t$ the normalising factor of the conditioning satisfies
\begin{align*}
1
&\ge  
\int_1^{\infty}\frac{\alpha  dy}{y^{\alpha+1}}\1_{\{y-\frac{|z|}{t}q\log t<\Psi_t(Z_t)\}}
\ge \int_1^{\Psi_t(Z_t)}\frac{\alpha dy}{y^{\alpha+1}}=1-\Psi_t(Z_t)^{-\alpha}.
\end{align*}
On the event $\mathcal{E}_t^{\ssup 1}$ we have
\begin{align*}
\beta_t \Psi_t(Z_t)=1-q\frac{|X_t|}{Y_t}>f_t
\end{align*}
which implies, since $\alpha>d\ge 2$, that 
\begin{align}
\label{1}
\int_1^{\infty}\frac{\alpha  dy}{y^{\alpha+1}}\1_{\{y-\frac{|z|}{t}q\log t<\Psi_t(Z_t)\}}=1+o(\beta_t^2).
\end{align}
\smallskip

Let us now show~\eqref{mu}. Using~\eqref{1} we have for all $z\neq Z_t$ 
\begin{align}
\mu_t(z)
&=\big(1+o(\beta_t^2)\big)
\int_1^{\xi(Z_t)(1-f_t)}\frac{\alpha}{y^{\alpha+1}}\Big[-\xi(Z_t)\log\Big(1-\frac{y}{\xi(Z_t)}\Big)\Big]
\1_{\{y-\frac{|z|}{t}q\log t<\Psi_t(Z_t)\}}dy\notag\\
&=\big(1+o(\beta_t^2)\big)\beta_t^{\alpha-1}\Big[B(\beta_t)-B\big(\min\{1-f_t, \rho_{t,z}\}\big)\Big],
\label{t1}
\end{align}
where $\rho_{t,z}$ is defined by~\eqref{rho0} and 
\begin{align}
\label{b}
B(x)=\int_x^1 \frac{\alpha}{s^{\alpha+1}}\big[-\log(1-s)\big]ds = \mu x^{1-\alpha}+O(x^{2-\alpha})
\qquad \text{as }x\downarrow 0.
\end{align}
Observe that on $\mathcal{E}_t^{\ssup 1}$ 
\begin{align}
\label{rho}
\rho_{t,z}\ge 1-q\frac{|X_t|}{Y_t}>f_t.
\end{align}
%
This together with~\eqref{b} implies that the subtracted term in~\eqref{t1} is negligible, 
and the asymptotics of the first term in~\eqref{t1} yields~\eqref{mu}.
\smallskip

Finally, let us show~\eqref{lambda}. Using~\eqref{1} we have   
\begin{align}
\Et \exp\{c\beta_t\zeta_t(z)\}
&=\big(1+o(\beta_t^2)\big)
\int_1^{\xi(Z_t)(1-f_t)}\frac{\alpha}{y^{\alpha+1}}\Big(1-\frac{y}{\xi(Z_t)}\Big)^{-c}\1_{\{y-\frac{|z|}{t}
q\log t<\Psi_t(Z_t)\}}dy\notag\\
&=\big(1+o(\beta_t^2)\big)\beta_t^{\alpha}
\Big[\hat B_c(\beta_t)-\hat B_c\big(\min\{1-f_t, \rho_{t,z}\}\big)\Big],
\label{t2}
\end{align}
where 
\begin{align}
\label{bhat}
\hat B_c(x)&=\int_x^{\frac 1 2}\frac{\alpha ds}{s^{\alpha+1}(1-s)^c}=
\left\{
\begin{array}{ll}
x^{-\alpha}+c\mu x^{1-\alpha}+O(x^{2-\alpha}) &\qquad \text{as }x\downarrow 0,\\
O\big((1-x)^{-1}\big) & \qquad \text{as }x\uparrow 1.
\end{array}\right.
\end{align}
It follows from~\eqref{rho} and~\eqref{bhat} that the subtracted term 
in~\eqref{t2} is negligible on $\mathcal{E}_t^{\ssup 1}$, and the asymptotics given by the first term yields
\begin{align*}
\Et \exp\{c\beta_t\zeta_t(z)\}=1+c\mu \beta_t+O(\beta_t^2). 
\end{align*}
This, together with~\eqref{mu}, implies 
\begin{align*}
\lambda_{t,z}(c\beta_t)=\log \Et \exp\{c\beta_t\zeta_t(z)\}-c\beta_t \mu_t(z)=O(\beta_t^2)
\end{align*}
as required. 
\end{proof}

To handle path intersections later in the paper, we introduce the nonlinearity correction associated with 
$\lambda_{t,z}$. Namely, for each $t$ and $z$ we define
\begin{align}
\label{lat}
\Lambda_{t,z}(c)=\lambda_{t,z}(c\beta_t)-c\lambda_{t,z}(\beta_t), \qquad c\in \R.
\end{align}
In the lemma below, the case $c=2$ corresponds to the typical situation where two paths intersect at a site that is visited once by each path. Since such intersections are common, we need a relatively precise asymptotic estimate in this case. Values $c\ge3$ arise from rarer intersection patterns, involving multiple visits to the same site, and for these an upper bound is sufficient.

\begin{lemma}
\label{l_lll}
Let $d\ge 2$.  Almost surely on the event $\mathcal{E}_t^{\ssup 1}$, 
the following holds. 
\begin{itemize}
\item[(1)] As $t\to\infty$, uniformly for all $z\neq Z_t$
\begin{align*}
\Lambda_{t,z}(2)&=O(\beta_t^2)
\end{align*}

\item[(2)] For all $t$ and $z\neq Z_t$
\begin{align*}
|\Lambda_{t,z}(c)|&\le c \log(1/f_t) \qquad\text{ for all }c\ge 0.
\end{align*}
\end{itemize}
\end{lemma}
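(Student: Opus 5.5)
Part (1) follows directly from Lemma~\ref{l_ml}. By definition $\Lambda_{t,z}(2)=\lambda_{t,z}(2\beta_t)-2\lambda_{t,z}(\beta_t)$. By~\eqref{lambda} with $c=2$ and $c=1$, both terms are $O(\beta_t^2)$ uniformly in $z\neq Z_t$ on $\mathcal{E}_t^{\ssup 1}$, so their combination is $O(\beta_t^2)$ as well.

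For part (2) I will bound $\lambda_{t,z}(c\beta_t)$ directly, with no asymptotic expansion, using the truncation built into $\zeta_t$. Since $p=1$ for $d\ge2$, the indicator in~\eqref{zeta} forces $\xi(z)<(1-f_t)\xi(Z_t)$ whenever $\zeta_t(z)\neq0$. Hence
\begin{align*}
0\le \beta_t\zeta_t(z)=-\log\Big(1-\frac{\xi(z)}{\xi(Z_t)}\Big)\1_{\{\xi(z)<(1-f_t)\xi(Z_t)\}}\le \log(1/f_t)
\end{align*}
deterministically. It follows that $0\le\beta_t\mu_t(z)\le\log(1/f_t)$, and so the centred variable $V=\beta_t[\zeta_t(z)-\mu_t(z)]$ satisfies $V\le\log(1/f_t)$ and $V\ge-\log(1/f_t)$. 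For $c\ge0$ this gives
\begin{align*}
-c\log(1/f_t)\le\lambda_{t,z}(c\beta_t)=\log\Et e^{cV}\le c\log(1/f_t).
\end{align*}
Jensen's inequality gives the sharper lower bound $\lambda_{t,z}(c\beta_t)\ge0$, because $\Et V=0$.

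Combining the two bounds requires some care. Since $\lambda_{t,z}(c\beta_t)$ and $\lambda_{t,z}(\beta_t)$ both lie in $[0,\infty)$, I obtain
\begin{align*}
\Lambda_{t,z}(c)\le\lambda_{t,z}(c\beta_t)\le c\log(1/f_t)
\qquad\text{and}\qquad
\Lambda_{t,z}(c)\ge-c\lambda_{t,z}(\beta_t)\ge-c\log(1/f_t).
\end{align*}
Together these give $|\Lambda_{t,z}(c)|\le c\log(1/f_t)$ for all $t$, all $z\neq Z_t$ and all $c\ge0$. The point of this argument is that it holds for every $t$, not just asymptotically. The only thing to check is that the bound $\beta_t\zeta_t\le\log(1/f_t)$ is exact, and it is, because the cut-off in~\eqref{zeta} was chosen precisely so that $1-\xi(z)/\xi(Z_t)>f_t$. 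I do not see a genuine obstacle here; the main care needed is in the sign bookkeeping in the last step.
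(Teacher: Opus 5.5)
Your proposal is correct and follows essentially the same route as the paper: part (1) is read off from \eqref{lambda} with $c\in\{1,2\}$, and part (2) rests on the same two ingredients. These are the deterministic bound $\beta_t\zeta_t(z)\le\log(1/f_t)$ coming from the truncation in \eqref{zeta}, with the nonnegative centring term dropped, and Jensen's lower bound $\lambda_{t,z}(c\beta_t)\ge 0$, both applied once with general $c$ and once with $c=1$. The only tacit point, which the paper also leaves implicit, is that $f_t<1$ on $\mathcal{E}_t^{\ssup 1}$ (since $1-q|X_t|/Y_t>f_t$), so that $\log(1/f_t)>0$.
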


\begin{proof} The first statement follows immediately from~\eqref{lambda} in Lemma~\ref{l_ml}. 
\smallskip

To prove the second statement, observe that for any $c\ge 0$ we have 
\begin{align}
\label{in1}
\lambda_{t,z}(c\beta_t)
&=\log \Et \exp\Big\{-c\log \Big(1-\frac{\xi(z)}{\xi(Z_t)}\Big)\1_{\{\xi(z)<\xi(Z_t)(1-f_t)\}}\Big\} -c\beta_t\mu_t(z)
\le c\log(1/f_t)
\end{align}
by dropping the positive subtracted term and by using the indicator function condition to estimate the first term. On the other hand, by Jensen's inequality 
\begin{align}
\label{in2}
\lambda_{t,z}(c\beta_t)\ge \Et \Big\{c\beta_t\big[\zeta_t(z)-\mu_t(z)\big]\Big\}=0. 
\end{align}
Now the inequalities~\eqref{in1} and~\eqref{in2}, used with an arbitrary $c$ and $c=1$, give the second statement. 
\end{proof}
\bigskip


\section{Multidimensional case}
\label{s_multi}

As in the one-dimensional case, our starting point is Proposition~\ref{p_shortlong}. In higher dimensions, however, the product terms along a given path can no longer be compared directly with those along a shortest path, since a general path need not contain a distinguished shortest path as a subset. We therefore take a different approach.
\smallskip

In Proposition~\ref{p_product} below, we rewrite the product terms via the centred and normalised polymer weights $H_t$, using the asymptotic information on the centring and normalisation parameters obtained in Lemma~\ref{l_ml}.

\begin{prop} 
\label{p_product}
Let $d\ge 2$ and $\alpha\in (d, 2d)$. 
Almost surely on the event $\mathcal{E}_t^{\ssup 1}\cap \mathcal{E}_t^{\ssup 2}$, 
as $t\to\infty$,
\begin{align}
\label{t4}
\prod_{\substack{i=0\\ y_i\neq Z_t}}^{\ell(y)}\frac{1}{\xi(Z_t)-\xi(y_i)}
&=\exp\Big\{-|Z_t|\log \xi(Z_t)+\mu\frac{|Z_t|}{\xi(Z_t)}+o(1)\Big\} 
\Big[\frac{1 +o(1)}{\xi(Z_t)}\Big]^{2w+s}(1+H_t(y))
\end{align}
uniformly for all $(m,w,s)$ and all $y\in\mathcal{P}_{t,m,w,s}$.
\end{prop}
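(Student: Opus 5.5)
We argue by direct algebra on the exponent. On $\mathcal{E}_t^{\ssup 2}$ every site $y_i\neq Z_t$ with $|y_i|<R_t$ satisfies $\xi(y_i)<(1-f_t)\xi(Z_t)$, so the indicator in~\eqref{zeta} equals one (recall $p=1$ for $d\ge2$). Hence each factor can be written as
\begin{align*}
\frac{1}{\xi(Z_t)-\xi(y_i)}=\beta_t\exp\{\beta_t\zeta_t(y_i)\}.
\end{align*}
The number of indices $i$ with $y_i\neq Z_t$ is $\ell(y)-m=|Z_t|+2w+s$. The product is therefore
\begin{align*}
\xi(Z_t)^{-|Z_t|-2w-s}\exp\Big\{\beta_t\sum_{y_i\neq Z_t}\zeta_t(y_i)\Big\}.
\end{align*}
Next we insert the centring and normalisation from the definition~\eqref{ht}:
\begin{align*}
\beta_t\sum\zeta_t(y_i)=\log(1+H_t(y))+\beta_t\sum\mu_t(y_i)+\sum\lambda_{t,y_i}(\beta_t).
\end{align*}
Each sum here has $|Z_t|+2w+s$ terms. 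It remains to show that the two deterministic sums equal $\mu|Z_t|\beta_t+o(1)+(2w+s)o(1)$, uniformly over the path.

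We use Lemma~\ref{l_ml}, whose bounds hold uniformly in $z\neq Z_t$ on $\mathcal{E}_t^{\ssup 1}$:
\begin{align*}
\beta_t\mu_t(z)=\mu\beta_t+O(\beta_t^2),\qquad \lambda_{t,z}(\beta_t)=O(\beta_t^2).
\end{align*}
Summing over the $|Z_t|$ terms corresponding to a shortest-path count gives
\begin{align*}
\mu\frac{|Z_t|}{\xi(Z_t)}+O(\beta_t^2|Z_t|)=\mu\frac{|Z_t|}{\xi(Z_t)}+o(1),
\end{align*}
by Lemma~\ref{l_b2z} with $c=0$. This is exactly where $\alpha<2d$ enters. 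The remaining $2w+s$ terms each contribute $\mu\beta_t+O(\beta_t^2)=o(1)$, since $\beta_t<1/(a_tf_t)\to0$ on $\mathcal{E}_t^{\ssup 1}$. So they give a factor $e^{o(1)(2w+s)}=(1+o(1))^{2w+s}$. We absorb this into the bracket $\big[\frac{1+o(1)}{\xi(Z_t)}\big]^{2w+s}$.

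The $o(1)$ terms are deterministic functions of $t$ (times constants from Lemma~\ref{l_ml}) and do not depend on $(m,w,s)$ or $y$, which gives the required uniformity. The only point needing care is this bookkeeping: the $O(\beta_t^2)$ errors must be split into $|Z_t|$ ``base'' terms, controlled collectively via Lemma~\ref{l_b2z}, and $2w+s$ ``extra'' terms, controlled individually per step. Summing $O(\beta_t^2)$ over all $\ell(y)$ terms at once would not give $o(1)$ when $w,s$ are as large as $|Z_t|f_t$. We must also check that $\mathcal{E}_t^{\ssup 2}$ covers every visited site, which holds since every site on a path in $\mathcal{P}_t$ has $|y_i|<R_t$. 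Finally, $\log(1+H_t(y))$ is well defined because $1+H_t(y)>0$ as a product of exponentials.
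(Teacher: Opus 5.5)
Your proof is correct and follows the paper's argument essentially step for step. On $\mathcal{E}_t^{\ssup 2}$ you write each factor as $\beta_t e^{\beta_t\zeta_t(y_i)}$, recentre using Lemma~\ref{l_ml}, control the $|Z_t|$ base terms via Lemma~\ref{l_b2z}, and absorb the $2w+s$ extra terms into the bracket using $\beta_t=o(1)$.

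One small correction to your closing remark. Summing the $O(\beta_t^2)$ errors over all $|Z_t|+2w+s<R_t=|Z_t|(1+f_t)$ terms at once does give $o(1)$ by Lemma~\ref{l_b2z}, and this is how the paper does it. Only the drift terms $\mu\beta_t$ of the extra steps cannot be summed, and they must be put into the bracket.
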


\begin{proof} For any $y\in\mathcal{P}_{t,m,w,s}$ we have $|y_i|<R_t$ for all $i$.  
Hence on the event $\mathcal{E}_t^{\ssup 2}$ 
we have 
\begin{align}
\label{t3}
\prod_{\substack{i=0\\ y_i\neq Z_t}}^{\ell(y)}\frac{1}{\xi(Z_t)-\xi(y_i)}
&=\frac{1}{\xi(Z_t)^{\ell(y)-m}}\prod_{\substack{i=0\\ y_i\neq Z_t}}^{\ell(y)} 
\exp\Big\{\frac{\zeta_t(y_i)}{\xi(Z_t)}\Big\}.
\end{align}
Further, Lemma~\ref{l_ml} on $\mathcal{E}_t^{\ssup 1}$ implies that uniformly
\begin{align*}
\frac{\zeta_t(y_i)}{\xi(Z_t)}
=\beta_t\big[\zeta_t(y_i)-\mu_t(y_i)\big]-\lambda_{t,y_i}(\beta_t)+\mu \beta_t+O(\beta_t^2).
\end{align*}
Substituting this into~\eqref{t3}, using $\ell(y)=|Z_t|+m+2w+s$, the number of terms in the product being 
$|Z_t|+2w+s<R_t$,  $\beta_t=o(1)$ and Lemma~\ref{l_b2z} on the event $\mathcal{E}_t^{\ssup 1}$ we obtain 
\begin{align*}
\prod_{\substack{i=0\\ y_i\neq Z_t}}^{\ell(y)}\frac{1}{\xi(Z_t)-\xi(y_i)}
&=\exp\Big\{-|Z_t|\log \xi(Z_t)+\mu \frac{|Z_t|}{\xi(Z_t)}+o(1)\Big\}\Big[\frac{1+o(1)}{\xi(Z_t)}\Big]^{2w+s}\\&\times \prod_{\substack{i=0\\ y_i\neq Z_t}}^{\ell(y)} 
\exp\Big\{\beta_t\big[\zeta_t(y_i)-\mu_t(y_i)\big]-\lambda_{t,y_i}(\beta_t)\Big\}, 
\end{align*}
which is equivalent to~\eqref{t4}.
\end{proof}

Denote
\begin{align*}
A_t=t\Big[\xi(Z_t)-\frac{|Z_t|}{t}\log \xi(Z_t)+\frac{\log N(Z_t)}{t}\Big]-2dt+\mu\frac{|Z_t|}{\xi(Z_t)}
\end{align*}
and observe that it is equal to the desired asymptotic~\eqref{asymp_d}  
\begin{align}
\label{at}
A_t=t\Psi_t^*(Z_t^*)-2dt+\mu\frac{|Z_t^*|}{\xi(Z_t^*)}
\end{align}
on the event $\mathcal{E}_t^{\ssup 1}\cap\{Z_t=Z_t^*\}$. Here the event $\mathcal{E}_t^{\ssup 1}$
is required to satisfy the indicator function condition in the definition of $\Psi_t^*$. 
\smallskip

In the proposition below, we rewrite the total contribution of all paths so that the 
asymptotic~\eqref{asymp_d}   is made explicit, and separate the remainder into two error terms: one arising from the shortest paths and the other from the contribution of all non-shortest paths.

\begin{prop} 
\label{p_xixi}
Let $d\ge 2$ and $\alpha\in (d,2d)$. 
Almost surely on the event 
$\mathcal{E}_t^{\rm loc}\cap \mathcal{E}_t^{\ssup 1}\cap \mathcal{E}_t^{\ssup 2}\cap\{Z_t=Z_t^*\}$
\begin{align}
\label{uu5}
U_0(t)=\exp\Big\{t\Psi_t^*(Z_t^*)-2dt+\mu\frac{|Z_t^*|}{\xi(Z_t^*)}+o(1)\Big\}\big(1+\Xi_t^{\rm min}+\Xi_t\big)+o(U(t)), 
\end{align}
as $t\to\infty$, where 
\begin{align*}
\Xi_t^{\rm min}=\frac{1}{N(Z_t)}\sum_{y\in \PP}H_t(y)
\qquad\text{and}\qquad 
\Xi_t=e^{-A_t}\sum_{y\in \mathcal{P}_t\setminus\PP}U(t,y).
\end{align*}
\end{prop}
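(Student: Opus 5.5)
The plan is to split $U_0(t)$ via~\eqref{deco} into the contribution of the shortest paths $\PP$ and of the remaining paths $\mathcal{P}_t\setminus\PP$. By definition of $\Xi_t$, the second part is exactly $e^{A_t}\Xi_t$, and on $\mathcal{E}_t^{\ssup 1}\cap\{Z_t=Z_t^*\}$ we have $e^{A_t}=\exp\{t\Psi_t^*(Z_t^*)-2dt+\mu|Z_t^*|/\xi(Z_t^*)\}$ by~\eqref{at}. So it remains to show that
\begin{align*}
\sum_{y\in\PP}U(t,y)=e^{A_t+o(1)}\big(1+\Xi_t^{\rm min}\big)+o(U(t)).
\end{align*}

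For this we apply~\eqref{short} in Proposition~\ref{p_shortlong}(1), valid on $\mathcal{E}_t^{\rm loc}\cap\mathcal{E}_t^{\ssup 1}\cap\mathcal{E}_t^{\ssup 2}$. This replaces the shortest-path sum by
\begin{align*}
e^{t\xi(Z_t)-2dt}\sum_{y\in\PP}\prod_{j=0}^{|Z_t|-1}\big(\xi(Z_t)-\xi(y_j)\big)^{-1}
\end{align*}
up to $o(U(t))$. Each shortest path visits $Z_t$ only at its final step, so its product coincides with the product in~\eqref{t4} with $m=w=s=0$. Proposition~\ref{p_product} then gives, uniformly over $y\in\PP$,
\begin{align*}
\prod_{j=0}^{|Z_t|-1}\big(\xi(Z_t)-\xi(y_j)\big)^{-1}=\exp\Big\{-|Z_t|\log\xi(Z_t)+\mu\frac{|Z_t|}{\xi(Z_t)}+o(1)\Big\}\big(1+H_t(y)\big).
\end{align*}

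The key point is that the $o(1)$ in the exponent does not depend on $y$. Proposition~\ref{p_product} provides this uniformity, since the error comes only from the deterministic bounds $O(\beta_t^2)\cdot R_t$ (Lemma~\ref{l_b2z}) and $O(\beta_t)$ in Lemma~\ref{l_ml}. Hence the common factor $e^{o(1)}$ can be pulled out of the sum. Summing over the $N(Z_t)$ shortest paths gives
\begin{align*}
N(Z_t)\,e^{-|Z_t|\log\xi(Z_t)+\mu|Z_t|/\xi(Z_t)+o(1)}\big(1+\Xi_t^{\rm min}\big).
\end{align*}
Multiplying by $e^{t\xi(Z_t)-2dt}$ and writing $N(Z_t)=e^{\log N(Z_t)}$, the exponent becomes exactly $A_t+o(1)$.

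The only delicate step is this extraction of a common $e^{o(1)}$ factor. Here $1+H_t(y)$ is positive, so multiplying the terms by factors lying in $[e^{-c_t},e^{c_t}]$ for a deterministic $c_t\to0$ multiplies the whole sum by such a factor. Combining the two parts, the $o(U(t))$ error from~\eqref{short} can be kept additive. The non-shortest paths contribute $e^{A_t}\Xi_t$ exactly, and absorbing $e^{A_t}$ into $e^{A_t+o(1)}$ requires the same care. To handle this, write $e^{A_t}\Xi_t=e^{A_t+o(1)}\Xi_t'$ with $\Xi_t'=e^{-o(1)}\Xi_t$, or more simply note that the $o(1)$ may be chosen as a single factor common to both terms. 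Since~\eqref{uu5} is stated with a combined $o(1)$ and $\Xi_t$ will later be shown to be $o(1)$ in probability, this is harmless. Finally, restricting to $\{Z_t=Z_t^*\}$ yields~\eqref{uu5}.
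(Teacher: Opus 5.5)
Your argument follows the paper's own proof. You split $U_0(t)$ into shortest and non-shortest paths, apply~\eqref{short} and then Proposition~\ref{p_product} with $m=w=s=0$, sum over the $N(Z_t)$ shortest paths to get $e^{A_t+o(1)}(1+\Xi_t^{\rm min})$, keep $e^{A_t}\Xi_t$ for the remaining paths, and identify $A_t$ via~\eqref{at}. Your remark about positivity is a detail the paper leaves implicit: the exponent's $o(1)$ does depend on $y$, but it is uniformly small, and $1+H_t(y)>0$ is what lets you pull it out of the sum.

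The one step you do not actually justify is the last one, absorbing $e^{A_t}\Xi_t$ into $e^{A_t+o(1)}\Xi_t$. Redefining $\Xi_t$ as $\Xi_t'$ is not allowed, because $\Xi_t$ is fixed in the statement. Appealing to the later convergence $\Xi_t\to 0$ in probability cannot establish an identity claimed almost surely on the event. The claim that the $o(1)$ can be chosen common to both terms is true, but it needs a reason.

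The fix takes one line. We have $0\le e^{A_t}\Xi_t=\sum_{y\in\mathcal{P}_t\setminus\PP}U(t,y)\le U_0(t)\le U(t)$. Let $\epsilon_t$ be the $o(1)$ obtained from the shortest paths, with $|\epsilon_t|\le c_t$ for a deterministic $c_t\to0$. Then $|e^{A_t+\epsilon_t}\Xi_t-e^{A_t}\Xi_t|\le (e^{c_t}-1)U(t)=o(U(t))$, and this difference goes into the additive error term. The paper also passes over this step without comment.
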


\begin{proof}  
Observe that  
\begin{align*}
U_0(t)
&=\sum_{y\in \PP}U(t,y)+
\sum_{y\in \mathcal{P}_{t}\setminus \PP} U(t,y). 
\end{align*}

For the first term, we use~\eqref{short} in Proposition~\ref{p_shortlong} 
 followed by 
Proposition~\ref{p_product} to get  
\begin{align}
U_0(t)
&=e^{t\xi(Z_t)-2dt}\sum_{y\in \PP}
\prod_{j=0}^{|Z_t|-1}\frac{1}{\xi(Z_t)-\xi(y_j)} 
+e^{A_t}\Xi_t
+ o\big(U(t)\big)\notag\\
&=e^{A_t+o(1)}\frac{1}{N(Z_t)}\sum_{y\in \PP}
(1+H_t(y))
+e^{A_t}\Xi_t
+ o\big(U(t)\big)
\label{e4}
\end{align}
which yields~\eqref{uu5} due to~\eqref{at}.
\end{proof}

While the first error term $\Xi_t^{\rm min}$ is already expressed via the polymer weights $H_t$, the second error term $\Xi_t$ is not. This is because Proposition~\ref{p_shortlong} provides an asymptotic expression for the  shortest paths, but only an upper bound for the  non-shortest paths. At this stage, however, our goal is simply to show that both error terms are negligible, so this upper bound will now be used in the following proposition to estimate $\Xi_t$ in terms of $H_t$.
\smallskip

It is also useful to keep in mind the following. In the calculation of the contribution of the shortest paths in~\eqref{e4}, each path $y$ contributed a factor $1+H_t(y)$. Summing the deterministic parts $1$ over all shortest paths produced the factor $N(Z_t^*)$ in the asymptotic expression~\eqref{asymp_d}, while the fluctuation terms $H_t(y)$ gave rise to $\Xi_t^{\rm min}$. For non-shortest paths in the proposition below, each path will again contribute a factor $1+H_t(y)$. In this case, however, the terms involving $H_t(y)$ will form the main part of the upper bound, whereas the deterministic contributions $1$ will turn out to be negligible, since the penalties incurred by non-shortest paths dominate the combinatorial growth of the corresponding path spaces.

\begin{prop}
\label{p_xi}
Let $d\ge 2$ and $\alpha\in (d,2d)$. 
Almost surely on the event 
$\mathcal{E}_t^{\rm loc}\cap \mathcal{E}_t^{\ssup 1}\cap \mathcal{E}_t^{\ssup 2}$, as $t\to \infty$
\begin{align}
\label{upbo}
\Xi_t\le  \frac{2} {N(Z_t)}\sum_{(m,w,s)\neq 0} \sum_{y\in \mathcal{P}_{t, m, w, s}}
  \frac{t^m}{ \xi(Z_t)^{2w+s}m!}H_t(y) +o(1).
\end{align}
\end{prop}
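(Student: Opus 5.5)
Our plan is to combine the upper bound~\eqref{long} of Proposition~\ref{p_shortlong}(2) with the product representation~\eqref{t4} of Proposition~\ref{p_product}. Then the deterministic parts ``$1$'' of the resulting weights are absorbed into the $o(1)$ term via Lemma~\ref{l_npaths}.

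\emph{Step 1: bound each non-shortest path.} Fix $y\in\mathcal{P}_{t,m,w,s}$ with $(m,w,s)\neq 0$. On $\mathcal{E}_t^{\ssup 2}$, \eqref{long} gives
\[
U(t,y)\le e^{t\xi(Z_t)-2dt}\frac{t^m}{m!}\prod_{i:\,y_i\neq Z_t}\frac{1}{\xi(Z_t)-\xi(y_i)}.
\]
Substituting \eqref{t4}, valid on $\mathcal{E}_t^{\ssup 1}\cap\mathcal{E}_t^{\ssup 2}$ uniformly in $(m,w,s)$ and $y$, we get
\[
U(t,y)\le e^{t\xi(Z_t)-|Z_t|\log\xi(Z_t)-2dt+\mu|Z_t|/\xi(Z_t)+o(1)}\,\frac{t^m}{m!}\Big[\frac{1+o(1)}{\xi(Z_t)}\Big]^{2w+s}(1+H_t(y)).
\]
The prefactor equals $e^{A_t+o(1)}/N(Z_t)$ by the definition of $A_t$. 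Note that $1+H_t(y)\ge 0$, since it is a product of exponentials.

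\emph{Step 2: sum and split.} Multiplying by $e^{-A_t}$ and summing over $y\in\mathcal{P}_t\setminus\PP$, which is exactly the union of the $\mathcal{P}_{t,m,w,s}$ with $(m,w,s)\neq0$, we obtain
\[
\Xi_t\le \frac{e^{o(1)}}{N(Z_t)}\sum_{(m,w,s)\neq0}\sum_{y\in\mathcal{P}_{t,m,w,s}}\frac{t^m(1+o(1))^{2w+s}}{m!\,\xi(Z_t)^{2w+s}}\,(1+H_t(y)).
\]
The factor $(1+o(1))^{2w+s}$ is the one place where care is needed. It cannot simply be replaced by a constant, because $2w+s$ may be as large as $|Z_t|f_t$.

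\emph{Step 3: handle the growing factor.} We try to absorb $(1+o(1))^{2w+s}$ with the mechanism of Lemma~\ref{l_npaths}. Write $(1+o(1))^{2w+s}\le (f_t^{c})^{-(2w+s)}$ for a suitable $c<0$ (so $f_t^{-c}\to 0$ is the small factor); this is legitimate because the $o(1)$ is eventually below $f_t^{-c}$ once $f_t$ decays slowly enough. Then the deterministic part, $\sum_y t^m/(m!\,(f_t^{c}\xi(Z_t))^{2w+s})$, is $o(N(Z_t))$ by Lemma~\ref{l_npaths} (applied with $c$ of either sign). Hence the ``$1$'' contributions give $o(1)$.

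For the $H_t(y)$ terms there is a sign issue, since $H_t(y)\ge -1$ can be negative. We split $(1+o(1))^{2w+s}(1+H_t)=(1+o(1))^{2w+s}+(1+o(1))^{2w+s}H_t$ and want to reach the coefficient $2$ in front of $H_t$ in~\eqref{upbo}. On the terms where $H_t<0$, replacing the factor $e^{o(1)}(1+o(1))^{2w+s}$ by $2$ only helps when that factor is at most $2$. When it exceeds $2$, we use $-H_t\le 1$, so the excess is bounded by the deterministic sum again, which is $o(1)$ by Lemma~\ref{l_npaths}. When $H_t\ge0$, we need $e^{o(1)}(1+o(1))^{2w+s}\le 2$, and this fails for large $2w+s$.

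This is the main obstacle. The plan is to make the $o(1)$ in~\eqref{t4} explicit: it is $O(\beta_t)$, coming from Lemma~\ref{l_ml} (from $\mu_t=\mu+O(\beta_t)$ and $\lambda=O(\beta_t^2)$). One then either refines the representation so that the extra-step terms enter through $H_t$ itself, or bounds $(1+O(\beta_t))^{2w+s}\le e^{O(\beta_t|Z_t|f_t)}$ and checks that this is at most $2$ on the relevant range. Failing that, the fallback is to keep the factor $(1+o(1))^{2w+s}$ inside the sum. The later $L^2$ estimates of Proposition~\ref{p_c} carry penalties $(f_t^c\xi(Z_t))^{-(2w+s)}$ anyway, and so they tolerate it.
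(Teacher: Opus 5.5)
Your outline matches the paper's proof. You combine \eqref{long} with \eqref{t4}, recognise the prefactor as $e^{A_t+o(1)}/N(Z_t)$, and remove the deterministic part with Lemma~\ref{l_npaths}. The paper finishes in one line by asserting $U(t,y)\le \frac{2e^{A_t}}{N(Z_t)}\frac{t^m}{\xi(Z_t)^{2w+s}m!}(1+H_t(y))$, that is, it bounds $e^{o(1)}(1+o(1))^{2w+s}$ by $2$ without comment. So there is no missing idea in the paper for you to find: the obstacle you flag is genuine, and it is a hole in the paper's argument too.

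By Lemma~\ref{l_ml}, the bracketed $1+o(1)$ in \eqref{t4} is $e^{\mu\beta_t+O(\beta_t^2)}$. Hence the prefactor equals $\exp\{\mu\beta_t(2w+s)+o(1)\}$, since the $O(\beta_t^2)$ terms sum to $o(1)$ by Lemma~\ref{l_b2z}. This exceeds $2$ as soon as $2w+s\ge \xi(Z_t)$, because $\mu>1$. Such paths are admissible: on $\mathcal{E}_t^{\ssup 1}$ one has $\beta_t|Z_t|f_t> f_t^2g_t^{-1}r_t^{1-d/\alpha}\to\infty$.

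The same computation rules out your second option. Your first option cannot work either: $e^{\mu\beta_t(2w+s)}$ is part of the $\mathcal{G}_t$-conditional mean of the path weight, and no recentring moves it into a mean-zero term. With the coefficients exactly as in \eqref{upbo}, the available tools give the inequality only with probability tending to one, via a conditional Markov bound on the paths whose prefactor exceeds $2$. They do not give it almost surely on the stated event with a deterministic $o(1)$.

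Your fallback is the right repair. Commit to it, and say explicitly that it proves a modified inequality. Since $e^{\mu\beta_t+O(\beta_t^2)}\le f_t^{-1}$ for large $t$ and $1+H_t(y)\ge 0$, you get $U(t,y)\le\frac{2e^{A_t}}{N(Z_t)}\frac{t^m}{(f_t\xi(Z_t))^{2w+s}m!}(1+H_t(y))$. This is \eqref{upbo} with $\xi(Z_t)^{2w+s}$ replaced by $(f_t\xi(Z_t))^{2w+s}$. The $1$-part is then $o(1)$ by Lemma~\ref{l_npaths} with $c=1$. The $L^2$ bound in the proof of Theorem~\ref{th_d} goes through verbatim with $f_t^{10}$ in place of $f_t^{9}$, so nothing downstream is lost.

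Watch the sign of $c$. You need $1+o(1)\le f_t^{-c}$, i.e.\ $c>0$. With $c<0$ as you wrote, $(f_t^{c})^{-(2w+s)}=f_t^{|c|(2w+s)}\to 0$, which cannot dominate $(1+o(1))^{2w+s}\ge 1$.
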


\begin{proof} Using~\eqref{long} in Proposition~\ref{p_shortlong}  followed by 
Proposition~\ref{p_product} we have for any $y\in \mathcal{P}_{t,m,w,s}$
\begin{align*}
U(t, y)\le \frac{2e^{A_t}}{N(Z_t)} \frac{t^m}{ \xi(Z_t)^{2w+s}m!}(1+H_t(y)).
\end{align*}
Summing over all paths in $\mathcal{P}_t\setminus \PP$ and using Lemma~\ref{l_paths} 
on the event $\mathcal{E}_t^{\ssup 1}$ we obtain 
\begin{align*}
\Xi_t
&\le \frac{2}{N(Z_t)}\sum_{(m,w,s)\neq 0}\sum_{y\in \mathcal{P}_{t, m, w, s}}
  \frac{t^m}{ \xi(Z_t)^{2w+s}m!}(1+H_t(y))\notag\\
&=  \frac{2}{N(Z_t)}\sum_{(m,w,s)\neq 0}\Big[
  \frac{t^m}{ \xi(Z_t)^{2w+s}m!}|\mathcal{P}_{t, m, w, s}|+\sum_{y\in \mathcal{P}_{t, m, w, s}}
  \frac{t^m}{ \xi(Z_t)^{2w+s}m!}H_t(y)\Big].
\end{align*}
It remains to observe that by Lemma~\ref{l_npaths} on the event $\mathcal{E}_t^{\ssup 1}$ the second 
term provides the required bound and the first term 
contributes $o(1)$. 
\end{proof}

Our aim now is to show that the error terms $\Xi_t^{\rm min}$ and $\Xi_t$ are negligible. We will use $L^2$-estimates, so the correlations between the polymer weights $H_t$ become important. These correlations are determined by the extent to which the underlying paths intersect. In the proposition below, we show that the correlations between shortest paths are small. This is a consequence of the assumption $\alpha<2d$, which makes the inverse temperature $\beta_t$ sufficiently small to suppress the effect of path intersections. We also show that, for non-shortest paths, the correlations cannot grow too rapidly, with their growth controlled by the excess lengths of the two paths.

\begin{prop} 
\label{p_c}
Let $d\ge 2$ and $\alpha\in (d,2d)$. 
Almost surely on the event $\mathcal{E}_t^{\ssup 1}$, 
the following holds. 
\begin{itemize}
\item[(1)] As $t\to\infty$, uniformly for all $y, \hat y\in \PP$
\begin{align}
\label{shortc}
\Et \big(H_t(y)H_t(\hat y)\big)=o(1).
\end{align}

\item[(2)] For all sufficiently large $t$, for any $(m,w,s), (\hat m, \hat w, \hat s)$ and
any $y\in \mathcal{P}_{t, m, w, s}, \hat y\in \mathcal{P}_{t, \hat m, \hat w, \hat s}$
\begin{align}
\label{longc}
\big|\Et \big(H_t(y)H_t(\hat y)\big)\big|\le f_t^{-9(2w+s+2\hat w + \hat s)}.
\end{align}
\end{itemize}
\end{prop}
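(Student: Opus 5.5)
Conditionally on $\mathcal{G}_t$ the variables $\{\xi(z): z\neq Z_t\}$ are independent, so the plan is to compute $\Et\big((1+H_t(y))(1+H_t(\hat y))\big)$ exactly as a product over sites. For a path $y$ and site $z\neq Z_t$, let $n_y(z)$ be the number of indices $i$ with $y_i=z$. Then
\begin{align*}
1+H_t(y)=\prod_{z\neq Z_t}\exp\big\{n_y(z)\beta_t[\zeta_t(z)-\mu_t(z)]-n_y(z)\lambda_{t,z}(\beta_t)\big\}.
\end{align*}
For a single path we have $\Et(1+H_t(y))=\prod_z \exp\{\lambda_{t,z}(n_y(z)\beta_t)-n_y(z)\lambda_{t,z}(\beta_t)\}=\exp\{\sum_z\Lambda_{t,z}(n_y(z))\}$. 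For the product of two paths, independence across sites gives
\begin{align*}
\Et\big((1+H_t(y))(1+H_t(\hat y))\big)=\exp\Big\{\sum_{z\neq Z_t}\Lambda_{t,z}\big(n_y(z)+n_{\hat y}(z)\big)\Big\}.
\end{align*}
Expanding $H_tH_t=(1+H)(1+\hat H)-(1+H)-(1+\hat H)+1$ then expresses $\Et(H_t(y)H_t(\hat y))$ through three exponentials of sums of $\Lambda_{t,z}$, with $\Lambda_{t,z}(1)=0$ and $\Lambda_{t,z}(0)=0$.

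For part (1), shortest paths are self-avoiding, so $n_y,n_{\hat y}\in\{0,1\}$. The single-path terms are therefore exactly $1$, and $\Et(H_tH_t)=\exp\{\sum_{z\in y\cap\hat y}\Lambda_{t,z}(2)\}-1$. By Lemma~\ref{l_lll}(1) each summand is $O(\beta_t^2)$. The number of intersection sites is at most $|Z_t|$, so the exponent is $O(\beta_t^2|Z_t|)=o(1)$ by Lemma~\ref{l_b2z}, uniformly in $y,\hat y$. This gives \eqref{shortc}.

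For part (2), the main point is to count how many sites can have total multiplicity $n_y(z)+n_{\hat y}(z)\ge 2$ beyond the generic pattern. Visits to $Z_t$ are excluded from the product. Excluding those, $y$ has $|Z_t|+2w+s$ visits to sites $\neq Z_t$. The visits before the first hit of $Z_t$ form a path of length $|Z_t|+2w$, so the number of repeated visits (sum of $(n_y(z)-1)^+$) in that segment is at most $w$. The post-$Z_t$ segment adds at most $s$ further visits. Hence $\sum_z (n_y(z)-1)^+\le w+s\le 2w+s$, and similarly for $\hat y$. I split the sites into three classes.

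\emph{Class 1:} sites with $n_y=n_{\hat y}\le 1$ and total multiplicity $2$. These contribute $\Lambda(2)=O(\beta_t^2)$ each, at most $R_t$ of them, so their total is $o(1)$ by Lemma~\ref{l_b2z}.

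\emph{Class 2:} sites with total multiplicity $c\ge3$, or with $n_y=2$ and $n_{\hat y}=0$, and so on. The number of such sites and their total multiplicity are bounded by a constant times $2w+s+2\hat w+\hat s$. Lemma~\ref{l_lll}(2) bounds each term by $c\log(1/f_t)$, so the total exponent is at most $C(2w+s+2\hat w+\hat s)\log(1/f_t)+o(1)$.

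\emph{Class 3:} the single-path factors $\exp\{\sum_z\Lambda_{t,z}(n_y(z))\}$ are bounded by the same argument, and they are nonnegative by Jensen, as in \eqref{in2}.

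Combining the three classes gives $|\Et(H_tH_t)|\le 3f_t^{-C(2w+s+2\hat w+\hat s)}+1$. When the excess is nonzero this is below $f_t^{-9(\cdots)}$ for large $t$. When all excesses vanish, part (1) applies instead.

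The main obstacle I expect is the bookkeeping constant: checking that the total multiplicity at sites with $c\ge 3$ is at most something like $3(2w+s+2\hat w+\hat s)$, so that with the factor $c$ from Lemma~\ref{l_lll}(2) and the three exponentials the exponent $9$ suffices uniformly. I would first bound $\sum_z(n_y+n_{\hat y})\1_{\{n_y+n_{\hat y}\ge3\text{ or }\max(n_y,n_{\hat y})\ge2\}}$ by $3\sum_z[(n_y-1)^++(n_{\hat y}-1)^+]$.
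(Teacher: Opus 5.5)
Your proposal is correct and follows essentially the same route as the paper: the exact site-by-site formula for $\Et\big(H_t(y)H_t(\hat y)\big)$ in terms of $\Lambda_{t,z}$ (the paper's \eqref{hh}), part (1) via $|Z_t|\,O(\beta_t^2)=o(1)$ from Lemmas~\ref{l_b2z} and~\ref{l_lll}(1), and part (2) via Lemma~\ref{l_lll}(2) with the excess multiplicity controlled by $2w+s$ and $2\hat w+\hat s$. One small correction: the segment before the first visit to $Z_t$ can carry up to $2w$ repeated visits, not $w$. For example, a path that begins $0,e_1,0,e_1$ and then continues along a shortest path has $w=1$ but two repeated visits. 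The bound $\sum_z(n_y(z)-1)^+\le 2w+s$ that you actually use still holds, because $y$ makes $|Z_t|+2w+s$ visits to sites $\neq Z_t$ and must visit at least $|Z_t|$ distinct such sites, one on each $\ell_1$-sphere of radius $0,\dots,|Z_t|-1$.
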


\begin{proof} 
For any $y \in \mathcal{P}_{all}$ denote by 
\begin{align*}
L_y(x)=|\{i: y_i=x\}|,  \qquad x\in \Z^d,
\end{align*}
its local time and observe that 
\begin{align*}
H_t(y)
=\exp\Big\{\sum_{x\neq Z_t}L_y(x)\Big(\beta_t \big[\zeta_t(x)-\mu_t(x)\big]
-\lambda_{t,x}(\beta_t)\Big)\Big\}-1. 
\end{align*}
Hence for any two paths $y, \hat y\in \mathcal{P}_{all}$ we have 
\begin{align}
\Et \big(H_t(y)H_t(\hat y)\big)
&=\exp\Big\{\sum_{x\neq Z_t}\Lambda_{t, x}(L_y(x)+ L_{\hat y}(x))\Big\}
-\exp\Big\{\sum_{x\neq Z_t}\Lambda_{t, x}(L_y(x))\Big\}\notag\\
&\phantom{aaaaaaaaaaaaaaaaaaaaaaaaaaaaa}-\exp\Big\{\sum_{x\neq Z_t}\Lambda_{t, x}(L_{\hat y}(x))\Big\}
+1\notag\\
&=e^{\Gamma_t(y)+\Gamma_t(\hat y)}\big(e^{\Delta_t(y, \hat y)}-1\big)
+\big(e^{\Gamma_t(y)}-1\big)\big(e^{\Gamma_t(\hat y)}-1\big),
\label{hh}
\end{align}
where 
\begin{align*}
\Delta_t(y, \hat y)=\sum_{x\neq Z_t}\big[\Lambda_{t,x}(L_y(x)+L_{\hat y}(x))
-\Lambda_{t,x}(L_y(x))-\Lambda_{t,x}(L_{\hat y}(x))\big]
\end{align*}
and 
\begin{align*}
\Gamma_t(y)=\sum_{x\neq Z_t}\Lambda_{t,x}(L_y(x))
\qquad\text{and}\qquad
\Gamma_t(\hat y)=\sum_{x\neq Z_t}\Lambda_{t,x}(L_{\hat y}(x)).
\end{align*}

(1) To prove~\eqref{shortc} consider $y, \hat y\in \PP$ and observe that  
$L_y(x), L_{\hat y}(x)\in \{0,1\}$ for all $x$. Since 
$\Lambda_{t,x}(0)=\Lambda_{t,x}(1)=0$ we obtain 
\begin{align*}
\Gamma_t(y)=\Gamma_t(\hat y)=0. 
\end{align*}
Using Lemmas~\ref{l_b2z} and~\ref{l_lll} 
we get
\begin{align*}
|\Delta_t(y, \hat y)|=\Big|\!\!\!\!\!\!\!\!\sum_{\substack{x\neq Z_t\\ L_y(x)=L_{\hat y}(x)=1}}
\!\!\!\!\!\!\!\!\Lambda_{t,x}(2)
\Big|
\le |Z_t| O(\beta_t^2)=o(1),
\end{align*}
which implies 
\begin{align*}
\Et (H_t(y)H_t(\hat y))=o(1).
\end{align*}

(2) To prove~\eqref{longc}
consider $y\in \mathcal{P}_{t,m,w,s}$ and $\hat y\in \mathcal{P}_{t,\hat m,\hat w,\hat s}$. 
We may assume that $2w+s+2\hat w+\hat s\neq 0$ as otherwise the statement follows from (1).
Using $\Lambda_{t,x}(0)=\Lambda_{t,x}(1)=0$ and Lemma~\ref{l_lll}
we obtain 
\begin{align}
\label{ga1}
|\Gamma_t(y)|\le \sum_{\substack{x\neq Z_t\\ L_y(x)\ge 2}}|\Lambda_{t,x}(L_y(x))|
\le \log (1/f_t)\sum_{\substack{x\neq Z_t\\ L_y(x)\ge 2}}L_y(x)\le 2(2w+s) \log (1/f_t)
\end{align}
and similarly 
\begin{align}
\label{ga2}
|\Gamma_t(\hat y)|\le 2(2\hat w+\hat s) \log (1/f_t). 
\end{align}
Further, using Lemmas~\ref{l_b2z} and~\ref{l_lll} together with~\eqref{ga1} and~\eqref{ga2} we obtain
\begin{align*}
|\Delta_t (y, \hat y)|
&\le \Big|\!\!\!\!\!\!\!\!\sum_{\substack{x\neq Z_t\\ L_y(x)=L_{\hat y}(x)=1}}\!\!\!\!\!\!\!\!\Lambda_{t,x}(2)\Big|
+\Big|\!\!\!\!\!\!\!\! \sum_{\substack{x\neq Z_t\\ L_y(x)\ge 2\text{ or }  L_{\hat y}(x)\ge 2}}\!\!\!\!\!\!\!\!
\Lambda_{t,x}(L_y(x)+L_{\hat y}(x))\Big|
+|\Gamma_t(y)|+|\Gamma_t(\hat y)|\\
&\le \min\{\ell(y)-m, \ell(\hat y)-\hat m\}O(\beta_t^2)\\
&+\log(1/f_t)\!\!\!\!\!\!\!\!\!\!\!\!
\sum_{\substack{x\neq Z_t\\ L_y(x)\ge 2\text{ or } L_{\hat y}(x)\ge 2}}\!\!\!\!\!\!\!\!\!\!\!\!
\big[L_y(x)+L_{\hat y}(x)\big]
+|\Gamma_t(y)|+|\Gamma_t(\hat y)|\\
&\le |Z_t| O(\beta_t^2)+(2w+s+2\hat w+\hat s)(O(\beta_t^2)+5\log(1/f_t))
\le 6\log(1/f_t)(2w+s+2\hat w+\hat s).
\end{align*}

Substituting this into~\eqref{hh} we obtain 
\begin{align*}
\big|\Et (H_t(y)H_t(\hat y))\big|\le f_t^{-9(2w+s+2\hat w +\hat s)}
\end{align*}
as required. 
\end{proof}

\begin{proof}[Proof of Theorem~\ref{th_d}] 
By Proposition~\ref{p_xixi} it suffices to prove that $\Xi_t^{\rm min}\to 0$ and $\Xi_t\to 0$
in probability, since 
$\text{P}(\mathcal{E}_t^{\rm loc})\to 1$ 
by~\eqref{comploc0}, 
$\text{\rm P}(\mathcal{E}_t^{\ssup 1}\cap \mathcal{E}_t^{\ssup 2})\to 1$ by 
Proposition~\ref{events} and $\text{\rm P}(Z_t=Z_t^*)\to 1$
by Proposition~\ref{zz}. 
\smallskip

By Proposition~\ref{p_c} and 
since $\mathcal{E}_t^{\ssup 1}\in \mathcal{G}_t$ we have 
\begin{align}
\label{l2conv}
\text{E}\Big[ (\Xi_t^{\rm min})^2\1_{\mathcal{E}_t^{\ssup 1}}\Big]
= \text{E} \Big[\1_{\mathcal{E}_t^{\ssup 1}} \Et (\Xi_t^{\rm min})^2\Big]=
\text{E}\Big[\1_{\mathcal{E}_t^{\ssup 1}} \frac{1}{N(Z_t)^2}\sum_{y, \hat y\in \PP}
\Et (H_t(y)H_t(\hat y))\Big]=o(1)
\end{align}
since $o(1)$ in Proposition~\ref{p_c} according to our notation is uniform on the probability space. 
This implies $\Xi_t^{\rm min}\to 0$ in probability.
\smallskip

In order to prove that $\Xi_t\to 0$, by Proposition~\ref{p_xi} it suffices to show that the main term 
of the upper bound~\eqref{upbo} converges in distribution to zero. 
Similarly to~\eqref{l2conv} we will show the stronger $L^2$-type convergence on $\mathcal{E}_t^{\ssup 1}$.  
By Proposition~\ref{p_c}, Lemma~\ref{l_npaths} and 
since $\mathcal{E}_t^{\ssup 1}\in \mathcal{G}_t$ we have
\begin{align*}
\text{E} \Big[\Big(
&\frac{2} {N(Z_t)}\sum_{(m,w,s)\neq 0} \sum_{y\in \mathcal{P}_{t, m, w, s}}
  \frac{t^m}{ \xi(Z_t)^{2w+s}m!}H_t(y)
\Big)^2\1_{\mathcal{E}_t^{\ssup 1}}\Big]\\
&\le \text{E} \Big[\1_{\mathcal{E}_t^{\ssup 1}} 
\frac{4}{N(Z_t)^2}\sum_{\substack{(m,w,s)\neq 0\\ (\hat m,\hat w,\hat s)\neq 0}}
\sum_{\substack{y\in\mathcal{P}_{t, m, w, s}\\ \hat y\in\mathcal{P}_{t, \hat m, \hat w, \hat s}}}
\frac{t^{m+\hat m}}{\xi(Z_t)^{2w+s+2\hat w+\hat s}m!\hat m!}\big|\Et (H_t(y)H_t(\hat y))\big|
\Big]\\
&\le \text{E} \Big[\1_{\mathcal{E}_t^{\ssup 1}} 
\frac{4}{N(Z_t)^2}\sum_{\substack{(m,w,s)\neq 0\\ (\hat m,\hat w,\hat s)\neq 0}}
\frac{t^{m+\hat m}}{(f_t^9\xi(Z_t))^{2w+s+2\hat w+\hat s}m!\hat m!}|\mathcal{P}_{t, m, w, s}|
|\mathcal{P}_{t, \hat m, \hat w, \hat s}|
\Big]\\
&=\text{E} \Big[\1_{\mathcal{E}_t^{\ssup 1}} \Big(
 \frac{2}{N(Z_t)}\sum_{(m,w,s)\neq 0}
\frac{t^{m}}{(f_t^9 \xi(Z_t))^{2w+s}m!}
|\mathcal{P}_{t, m, w, s}|
\Big)^2\Big]=o(1)
\end{align*}
since $o(1)$ in Lemma~\ref{l_npaths}  is uniform on the probability space. This concludes the proof of 
 $\Xi_t\to 0$ in probability.
\end{proof}
\bigskip


\section{Appendix}

In this final section we collect the scaling limit results relying on the standard 
point process techniques. 
\smallskip

It was shown in~\cite[Lemma 6.2]{klms} that, as $t\to\infty$, 
\begin{align}
\label{four}
\Big(\frac{Z_t^*}{r_t}, \frac{\hat Z_t^*}{r_t}, \frac{\Psi_t^*(Z_t^*)}{a_t}, \frac{\Psi_t^*(\hat Z_t^*)}{a_t}\Big)
\Rightarrow (X, \hat X, \Upsilon, \hat\Upsilon),
\end{align}
and the limit random variable has density 
\begin{align}
\label{dens_p4}
p(x, \hat x, w, \hat w)=\frac{\alpha^2\exp\{-\theta \hat w^{d-\alpha}\}}{(w+q|x|)^{\alpha+1}
(\hat w+q|\hat x|)^{\alpha+1}}\1_{\{w> \hat w>0\}}, 
\qquad\text{with}\quad
\theta=\frac{2^d B(\alpha-d, d)}{q^d (d-1)!}, 
\end{align}
where $B$ denotes the Beta function.
In particular, 
\begin{align}
\label{two}
\Big(\frac{Z_t^*}{r_t}, \frac{\Psi_t^*(Z_t^*)}{a_t}\Big)
\Rightarrow (X, \Upsilon),
\end{align}
and the limit random variable has density
\begin{align}
\label{dens_p2}
p(x, w)=\frac{\alpha\exp\{-\theta w^{d-\alpha}\}}{(w+q|x|)^{\alpha+1}}\1_{\{w>0\}}, 
\end{align}
which is obtained from~\eqref{dens_p4} by integrating over $\hat x$ and $\hat w$.  
\smallskip

The key ingredient of the proof was the family of point  processes  
\begin{align*}
\Pi_s=\sum_{z\in \Z^d}\delta\Big(\frac z s, \frac{\xi(z)}{s^{d/\alpha}}\Big),\qquad s>0,
\end{align*}
where $\delta(x,y)$ denotes the Dirac measure at $(x,y)$. A straightforward generalisation of the proof of~\cite[Prop.\ 3.21]{reznick}
implies that, as $s\to\infty$,  
\begin{align}
\label{ppp}
\Pi_s\Rightarrow \Pi, 
\end{align} 
where $\Pi$ is 
a Poisson point process on the state space $\R^d\times (0,\infty]$ with the intensity measure 
\begin{align*}
\nu(dx\otimes dy)=dx\otimes \frac{\alpha}{y^{\alpha+1}}dy. 
\end{align*}
Denote the corresponding probability by $\Pp$. 
\smallskip

In the next proposition we identify the scaling limit of $(Z_t, \Psi_t(Z_t), \xi(Z_t))$ using a similar approach. In particular, 
we show that the scaling limit of $(Z_t, \Psi_t(Z_t))$ coincides with that of 
$(Z^*_t, \Psi_t^*(Z^*_t))$ given in~\eqref{two}. An analogous statement also holds for the joint convergence in~\eqref{four}, but we shall not need it, since it suffices to control the gap between the largest and second largest values of one of the two functionals.

\begin{prop} 
\label{p_new}
As $t\to\infty$, 
\begin{align*}
(X_t, \Upsilon_t, Y_t)\Rightarrow (X, \Upsilon, \Upsilon+q|X|).
\end{align*}
\end{prop}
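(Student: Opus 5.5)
Since $\Psi_t(Z_t)=\xi(Z_t)-q\frac{|Z_t|}{t}\log t$ and $\frac{r_t}{a_t}\cdot\frac{\log t}{t}=r_t^{1-d/\alpha}\frac{\log t}{t}=1$ by~\eqref{ra}, we have the deterministic identity $Y_t=\Upsilon_t+q|X_t|$. It therefore suffices to show $(X_t,\Upsilon_t)\Rightarrow(X,\Upsilon)$ with $(X,\Upsilon)$ having density~\eqref{dens_p2}; the third coordinate then follows by the continuous mapping theorem.

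The plan is to express $(X_t,\Upsilon_t)$ as a functional of the point process $\Pi_{r_t}$. Since $r_t^{d/\alpha}=a_t$, each point of $\Pi_{r_t}$ is $(z/r_t,\xi(z)/a_t)$, and
\begin{align*}
\frac{\Psi_t(z)}{a_t}=\frac{\xi(z)}{a_t}-q\Big|\frac{z}{r_t}\Big|,
\end{align*}
again using $\frac{r_t\log t}{t a_t}=1$. Thus $(X_t,\Upsilon_t)$ is the argmax and the maximum of the functional $(x,y)\mapsto y-q|x|$ over the atoms of $\Pi_{r_t}$. By~\eqref{ppp}, $\Pi_{r_t}\Rightarrow\Pi$. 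The candidate limit is the argmax and maximum of $y-q|x|$ over the atoms of the Poisson process $\Pi$, and a direct computation recovers~\eqref{dens_p2}. Indeed, $\nu\{(x,y): y-q|x|>w\}=\int\big(w+q|x|\big)^{-\alpha}dx=\theta w^{d-\alpha}$, which is finite because $\alpha>d$ and matches $\theta$ via the polar/Beta integral. Then $P(\text{max}\le w)=e^{-\theta w^{d-\alpha}}$, and the joint density is $\alpha(w+q|x|)^{-\alpha-1}e^{-\theta w^{d-\alpha}}$.

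The main obstacle is justifying the continuous-mapping step, because the map from point measures to (argmax, max) is not continuous in the vague topology. Points can escape to infinity in $x$, or accumulate near $y=0$. I would handle this by truncation. For fixed $\e>0$, choose a compact set $K=\{|x|\le M,\ y\ge\eta\}$ such that, for the limit, the probability that the maximiser lies outside $K$ or that the maximum is below $2\eta$ is less than $\e$. Since the maximum exceeds $\eta'$ with high probability and $\nu\{y-q|x|>\eta',\ |x|>M\}\to0$ as $M\to\infty$, this is possible.

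The same tail bound must hold uniformly in $t$ for $\Pi_{r_t}$. I would get this from a first-moment estimate. The expected number of $z$ with $|z|>Mr_t$ and $\xi(z)>a_t(\eta'+q|z|/r_t)$ is at most
\begin{align*}
\sum_{|z|>Mr_t}a_t^{-\alpha}\big(\eta'+q|z|/r_t\big)^{-\alpha},
\end{align*}
which is a Riemann sum converging to the integral above, and that integral is small for large $M$. On $K$ the restricted point process converges to a finite Poisson process. The functional (argmax, max) is continuous at configurations with a unique maximiser and no atoms on $\partial K$, which is an almost sure property of $\Pi$. Hence the restricted argmax and max converge. Letting $\e\to0$ yields the claim.
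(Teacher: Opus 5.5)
Your proof is correct and follows essentially the same route as the paper. Both rest on three ingredients: the convergence $\Pi_{r_t}\Rightarrow\Pi$; the identity $\Psi_t(z)/a_t=\xi(z)/a_t-q|z|/r_t$, which makes $(X_t,\Upsilon_t)$ the maximiser and maximum of $y-q|x|$ over the atoms; and the Poisson void probability $\theta w^{d-\alpha}$. The paper passes to the limit in $\text{\rm P}((X_t,Y_t)\in A)$ for compact continuity sets $A$ and concludes because the limiting density has total mass one, leaving implicit that $\{(u,v): v-q|u|>y-q|x|\}$ is not relatively compact. Your truncation together with the first-moment Riemann-sum bound handles exactly this point explicitly.
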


\begin{proof} 
Due to the deterministic relation $Y_t=\Upsilon_t+q|X_t|$ following from~\eqref{ra}, 
it suffices to show that $(X_t, Y_t)\Rightarrow (X, \Upsilon+q|X|)$.
\smallskip

Let $A$ be a compact Borel subset of $\{(x,y): x\in \R^d, y>q|x|\}$ such that its boundary has zero Lebesgue 
measure. Hence $A\subset \{(x,y): x\in\R^d, y>q|x|+\e\}$ with some $\e>0$, and
\begin{align*}
\nu\big(\{(x,y): y>q|x|+\e\}\big)=\int_{\R^d}\frac{dx}{(q|x|+\e)^{\alpha}}
=\frac{2^d}{(d-1)!}\int_0^{\infty}\frac{t^{d-1} dt}{(qt+\e)^{\alpha}}<\infty. 
\end{align*}
It follows from $Z_t$ being the maximizer of $\Psi_t$ and from 
\begin{align}
\label{ppppsi}
\frac{\Psi_t(z)}{a_t}=\frac{\xi(z)}{a_t}-q\frac{|z|}{r_t},
\end{align}
that 
\begin{align*}
\text{\rm P}\Big(\Big(\frac{Z_t}{r_t}, \frac{\xi(Z_t)}{a_t}\Big)\in A\Big)
=\int_A \text{\rm P}\Big(\Pi_{r_t}(dx\times dy)=1, \Pi_{r_t}\big(\big\{(u,v): v-q|u|>y-q|x|\big\}\big)=0\Big).
\end{align*} 
Using convergence~\eqref{ppp} we then get 
\begin{align}
\label{lim}
\lim_{t\to\infty}\text{\rm P}\big((X_t, Y_t)\in A\big)
&=\int_A \Pp\Big(\Pi(dx\times dy)=1, \Pi\big(\big\{(u,v): v-q|u|>y-q|x|\big\}\big)=0\Big).
\end{align}
Using a calculation from the proof of~\cite[Prop.\ 3.8]{hms}, we compute 
\begin{align*}
\nu\big(\big\{(u,v): v-q|u|>y-q|x|\big\}\big)
&=\int_{\R^d}\int_{y+q(|u|-|x|)}^{\infty}\frac{\alpha dvdu}{v^{\alpha+1}}\\
&=\int_{\R^d}\frac{du}{(y+q(|u|-|x|))^{\alpha}}
=\theta(y-q|x|)^{d-\alpha}. 
\end{align*}
Substituting this into~\eqref{lim} we obtain 
\begin{align*}
\lim_{t\to\infty}\text{\rm P}\big((X_t, Y_t)\in A\big)
=\int_A \exp\big\{-\theta(y-q|x|)^{d-\alpha}\big\}\frac{\alpha dxdy}{y^{\alpha+1}}
=\int_A p(x,y-q|x|)dxdy.
\end{align*}
Finally, the collection of test sets $A$ 
is big enough as according to~\eqref{dens_p2}
the integral of $p(x,y-q|x|)$ over $\{(x,y): x\in\R^d, y>q|x|\}$  equals one. 
\end{proof}

In the next proposition we show that $Z_t=Z_t^*$ and $\Psi_t(Z_t)\sim \Psi_t^*(Z_t^*)$ with probability tending to one. The key point is that both $Z_t$ and $Z_t^*$ are now known to be of order $r_t$, and for sites on this scale the difference $\Psi_t(z)-\Psi_t^*(z)$ is of order $o(a_t)$, since $\Psi_t$ was chosen to approximate $\Psi_t^*$ closely enough. On the other hand, the gap between the largest and second largest values of $\Psi_t^*$ is of order $a_t$. The discrepancy between the two functionals is therefore negligible compared with this gap, which forces their maximisers to coincide.

\begin{prop} 
\label{zz}
$\phantom{a}$

\begin{itemize}
\item[(1)] As $t\to\infty$,  $\text{\rm P}(Z_t=Z_t^*)\to 1$. 
\item[(2)] There is an event $\mathcal{E}_t^*$  such that $\Psi_t(Z_t)=(1+o(1))\Psi_t^*(Z_t^*)$
almost surely on $\mathcal{E}_t^*$ and $\text{\rm P}(\mathcal{E}_t^*)\to 1$. 
\end{itemize}
\end{prop}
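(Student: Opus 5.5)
We work on a typical event $\mathcal{E}_t^*$ on which both maximisers live on scale $r_t$ and the top of $\Psi_t^*$ is separated from the rest by order $a_t$. Fix $\e>0$. Using~\eqref{four} together with Proposition~\ref{p_new}, we choose $\delta>0$ and define $\mathcal{E}_t^*$ by the following requirements:
\begin{itemize}
\item[(a)] $|Z_t^*|,|\hat Z_t^*|,|Z_t|\le r_t/\delta$;
\item[(b)] $\Psi_t^*(Z_t^*)-\Psi_t^*(\hat Z_t^*)>\delta a_t$;
\item[(c)] $\Psi_t^*(Z_t^*)>\delta a_t$ and $\Psi_t(Z_t)>\delta a_t$.
\end{itemize}
Then $\text{\rm P}(\mathcal{E}_t^*)>1-\e$ eventually. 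This holds because the limit $(X,\hat X,\Upsilon,\hat\Upsilon)$ has a density supported on $\{w>\hat w>0\}$, so $\Upsilon-\hat\Upsilon>\delta$ and $\hat\Upsilon>0$ with high probability. Tightness of $Z_t/r_t$ follows from Proposition~\ref{p_new}. Letting $\e\to0$ slowly yields an event of probability tending to one.

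The core estimate compares $\Psi_t$ and $\Psi_t^*$ uniformly over the relevant sites. For $|z|\le r_t/\delta$ with $\xi(z)\ge \delta a_t$, we have $\log\xi(z)=q\log t-q\log\log t+O(\log(1/\delta))+\log(\xi(z)/a_t)$. Hence
\begin{align*}
\frac{|z|}{t}\big|\log\xi(z)-q\log t\big|\le \frac{r_t}{\delta t}\big(q\log\log t+\log^+(\xi(z)/a_t)+C_\delta\big).
\end{align*}
Since $r_t/t=a_t/\log t$, this is $o(a_t)$ as long as $\xi(z)\le a_t\log t$. The remaining case $\xi(z)>a_t\log t$ for some $|z|\le r_t/\delta$ has vanishing probability, because the maximum of $\xi$ over this ball is of order $a_t$. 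Moreover, $\log N(z)/t\le |z|\log d/t=o(a_t)$, and the indicator in $\Psi_t^*$ equals one since $t\xi(z)\ge t\delta a_t\gg r_t/\delta$. So $\sup|\Psi_t(z)-\Psi_t^*(z)|=o(a_t)$ over $\{|z|\le r_t/\delta,\ \xi(z)\ge\delta a_t\}$.

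Now suppose $Z_t\ne Z_t^*$ on $\mathcal{E}_t^*$. Both points belong to the comparison set: $\xi\ge\Psi$ and $\xi\ge\Psi^*$, and both values exceed $\delta a_t$ by (c). Then
\begin{align*}
\Psi_t^*(Z_t^*)-\Psi_t^*(Z_t)=\Psi_t^*(Z_t^*)-\Psi_t(Z_t)+o(a_t)\le \Psi_t^*(Z_t^*)-\Psi_t(Z_t^*)+o(a_t)=o(a_t).
\end{align*}
However, $Z_t$ is not the top maximiser of $\Psi_t^*$, so by (b) the left-hand side exceeds $\delta a_t$, a contradiction. This proves (1).

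For (2), the same comparison applied at $Z_t=Z_t^*$ gives $\Psi_t(Z_t)-\Psi_t^*(Z_t^*)=o(a_t)$. Together with $\Psi_t^*(Z_t^*)>\delta a_t$ this yields the ratio $1+o(1)$. The $o(\cdot)$ must be uniform in the paper's sense, so we shrink $\delta=\delta_t\to0$ slowly, for instance $\delta_t=f_t$, and check that the error bound $g_t\log\log t/(f_t\log t)$ still tends to zero.

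The main obstacle is making the uniform comparison hold on the whole ball $\{|z|\le r_t/\delta\}$: we need an a priori upper bound on $\xi$ there and a lower bound on $\xi$ at the candidate maximisers, and we need both with moving boundaries while keeping the probabilities close to one. I would try to handle this via the point process convergence~\eqref{ppp}.
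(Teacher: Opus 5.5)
Your proposal is correct and is essentially the paper's argument: on a typical event (the paper uses the moving boundaries $f_t,g_t$ from the start), a $\Psi_t^*$-gap of order $a_t$ is played off against $|\Psi_t-\Psi_t^*|=o(a_t)$ at the two points $Z_t,Z_t^*$ together with maximality of $Z_t$ for $\Psi_t$. The paper controls $\xi$ only at these two points through the upper bounds $g_t$ on $\Psi_t(Z_t)/a_t$ and $\Psi_t^*(Z_t^*)/a_t$, so your uniform comparison over the ball is unnecessary but harmless. One small fix: $\xi(Z_t^*)\ge\Psi_t^*(Z_t^*)$ is not automatic because of the $+\log N(z)/t$ term, but $\xi(Z_t^*)\ge\Psi_t^*(Z_t^*)-|Z_t^*|\log d/t\ge\delta a_t/2$ is enough to put $Z_t^*$ in your comparison set.
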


\begin{proof} 
Let 
\begin{align*}
\mathcal{E}_t^*=\Big\{f_t<\frac{|Z_t|}{r_t}<g_t, f_t<\frac{|Z_t^*|}{r_t}<g_t, 
f_t<\frac{\Psi_t(Z_t)}{a_t}<g_t, f_t<\frac{\Psi_t^*(Z_t^*)}{a_t}< g_t, \frac{\Psi_t^*(Z_t^*)-\Psi_t^*(\hat Z_t^*)}{a_t}> f_t\Big\}
\end{align*}
and observe that $\text{\rm P}(\mathcal{E}_t^*)\to 1$ by~\eqref{four}, \eqref{dens_p4} and 
Proposition~\ref{p_new}.
\smallskip

(1) It suffices to show that $\mathcal{E}_t^*\subset \{Z_t=Z_t^*\}$.  
Consider $\mathcal{E}_t^*$ and suppose $Z_t\neq Z_t^*$.  Then $Z_t$ is at best the second largest 
value of $\Psi_t^*$ and hence
\begin{align}
\label{ap3}
\frac{\Psi_t^*(Z_t^*)-\Psi_t^*(Z_t)}{a_t}\ge \frac{\Psi_t^*(Z_t^*)-\Psi_t^*(\hat Z_t^*)}{a_t}> f_t.
\end{align}

On the other hand, the indicator function condition in the definition of $\Psi_t^*$ is satisfied at $Z_t$ 
since  
\begin{align*}
t\xi(Z_t)=t\Psi_t(Z_t)+|Z_t|q\log t>ta_tf_t+qr_t f_t\log t>r_t g_t >|Z_t|, 
\end{align*}
implying that the values of both functionals $\Psi_t^*$ and $\Psi_t$ are close at each of the points 
$Z_t$ and $Z_t^*$, namely,    
\begin{align}
\label{ap1}
\frac{|\Psi_t(Z_t)-\Psi_t^*(Z_t)|}{a_t}&
=\frac{1}{\log t}\Big|\frac{|Z_t|}{r_t}\log \frac{\xi(Z_t)}{a_t}
-q\frac{|Z_t|}{r_t}\log\log t-\frac{\log N(Z_t)}{r_t}\Big|\notag\\
&=O\Big(g_t\frac{\log\log t}{\log t}\Big)=o(f_t)
\end{align}
since $N(z)\le (2d)^{|z|}$ for any $z$, and similarly
\begin{align}
\label{ap2}
\frac{|\Psi_t(Z_t^*)-\Psi_t^*(Z_t^*)|}{a_t}=o(f_t).
\end{align}
Now it follows from~\eqref{ap1} and~\eqref{ap2} that 
\begin{align*}
\frac{\Psi_t^*(Z_t^*)-\Psi_t^*(Z_t)}{a_t}=\frac{\Psi_t(Z_t^*)-\Psi_t(Z_t)}{a_t}+o(f_t)\le o(f_t)
\end{align*}
since $Z_t$ is the maximiser of $\Psi_t$, thus contradicting~\eqref{ap3}. 
\smallskip

(2) To prove the second statement, we use $Z_t=Z_t^*$ on $\mathcal{E}_t^*$ together with~\eqref{ap1} to obtain 
\begin{align*}
\frac{|\Psi_t(Z_t)-\Psi_t^*(Z_t^*)|}{\Psi_t^*(Z_t^*)}< \frac{|\Psi_t(Z_t)-\Psi_t^*(Z_t)|}{a_t f_t}
=O\Big(\frac{g_t}{f_t}\frac{\log\log t}{\log t}\Big)=o(1)
\end{align*}
as required. 
\end{proof}

In the final proposition, we verify that all the events introduced as typical indeed occur with probability either tending to $1$ or arbitrarily close to $1$. For the events $\mathcal{E}_t^{\ssup 1}$ and $\mathcal{E}_{t,\delta}$, this follows directly from the limiting distribution of $(X,Y)$ and the positivity of its density. For the event $\mathcal{E}_t^{\ssup 2}$, however, we need to dig a little deeper into the point process convergence~\eqref{ppp}.

\begin{prop}
\label{events} $\phantom{a}$
\begin{itemize}
\item[(1)] $\text{\rm P}(\mathcal{E}_t^{\ssup 1})\to 1$ and $\text{\rm P}(\mathcal{E}_t^{\ssup 2})\to 1$ as $t\to \infty$.
\item[(2)] For any $\e>0$ there is $\delta>0$ such that 
$\text{\rm P}(\mathcal{E}_{t, \delta})>1-\e$ for all sufficiently large $t$. 
\end{itemize} 
\end{prop}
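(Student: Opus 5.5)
For part (1) I would start with $\mathcal{E}_t^{\ssup 1}$ and split it into its four conditions. Each condition is a condition on $(X_t,Y_t)$ only, since $\Upsilon_t = Y_t - q|X_t|$ by~\eqref{ra}. By Proposition~\ref{p_new}, $(X_t,Y_t)\Rightarrow(X,Y)$ with $Y=\Upsilon+q|X|$, and the limit has a positive density on $\{(x,y): y>q|x|\}$. Consequently $\P(|X|=0)=0$, $\P(Y=0)=0$, $\P(\min_i|X_i|=0)=0$ and $\P(Y-q|X|=0)=0$, and also $|X|$ and $Y$ are finite almost surely.

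Fix $\delta>0$ and consider the closed set
\[
F_\delta=\{|x|\le\delta\}\cup\{y\le\delta\}\cup\{|x|\ge 1/\delta\}\cup\{y\ge 1/\delta\}\cup\{\min_i|x_i|\le\delta|x|\}\cup\{1-q|x|/y\le\delta\}.
\]
By the portmanteau theorem, $\limsup_t \P((X_t,Y_t)\in F_\delta)\le \P((X,Y)\in F_\delta)$, and the right-hand side tends to $0$ as $\delta\to0$. Since $f_t\to0$ and $g_t\to\infty$, the complement of $\mathcal{E}_t^{\ssup 1}$ is eventually contained in $F_\delta$. Letting $t\to\infty$ and then $\delta\to0$ gives $\P(\mathcal{E}_t^{\ssup 1})\to1$.

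Part (2) follows the same way. The event $\mathcal{E}_{t,\delta}$ is the complement of a closed set in $(x,y)$; on that set the condition $y^\alpha/|x|\le\delta$ is read as $y^\alpha\le\delta|x|$. The limiting probability of this closed set tends to $0$ as $\delta\to0$. Choosing $\delta$ small enough and applying portmanteau gives $\P(\mathcal{E}_{t,\delta})>1-\e$ eventually.

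The main obstacle is $\mathcal{E}_t^{\ssup 2}$, i.e.\ that no other site within distance $R_t$ has potential close to $\xi(Z_t)$. The naive route of using the gap between the first and second maxima of $\Psi_t$ fails, because a site $z$ with $|z|<|Z_t|$ may have small $\Psi_t$ yet $\xi(z)$ close to $\xi(Z_t)$. I would instead work directly with the point process $\Pi_{r_t}$. Fix $\delta>0$ and intersect with $\mathcal{E}_t^{\ssup 1}$, taken with $\delta$ in place of the moving bounds. On this event the site $Z_t$ corresponds to a point $(X_t,Y_t)$ of $\Pi_{r_t}$ lying in the compact set $K=\{|x|\le 2/\delta,\ \delta\le y\le 1/\delta\}$; note that $R_t/r_t\le |X_t|(1+f_t)\le 2/\delta$. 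The failure of $\mathcal{E}_t^{\ssup 2}$ then implies that $\Pi_{r_t}$ has two distinct points in $K$ whose $y$-coordinates differ by at most $f_t/\delta$.

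For any fixed $\eta>0$, eventually $f_t/\delta<\eta$. So it suffices to bound the probability that $\Pi_{r_t}$ has two points in $K$ with $y$-coordinates within $\eta$ of each other. I would control this by $\P(\Pi_{r_t}(K)\ge M)$ plus a second-moment bound. For independent sites, the expected number of such pairs is at most
\[
\nu(K)\cdot\sup_{y}\nu\big(K\cap\{|v-y|\le\eta\}\big)\le C(\delta)\,\eta,
\]
which can be computed directly from the i.i.d.\ Pareto field with $|z|\le 2r_t/\delta$ and is exact enough. Sending $t\to\infty$, then $\eta\to0$, then $\delta\to0$ yields $\P(\mathcal{E}_t^{\ssup 2})\to1$.
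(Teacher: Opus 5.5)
Your proof is correct. For $\mathcal{E}_t^{\ssup 1}$ and $\mathcal{E}_{t,\delta}$ it is the paper's argument, just written out via the portmanteau theorem. The paper likewise uses convergence of $(X_t,Y_t)$ from Proposition~\ref{p_new} and the fact that the limit has a density on $\{y>q|x|\}$.

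For $\mathcal{E}_t^{\ssup 2}$ you take a genuinely different route. The paper stays inside the Poisson limit. It bounds $\text{\rm P}\big(\max_{|z|<|Z_t|(1+\e),\,z\neq Z_t}\xi(z)/\xi(Z_t)<1-\e\big)$ from below by $\int_{\{y>q|x|+\e\}}\text{\rm P}\big(\Pi_{r_t}(dx\times dy)=1,\ \Pi_{r_t}(A^{\e}_{x,y})=0\big)$. Here $A^{\e}_{x,y}$ is the region above the level line $v-q|u|=y-q|x|$ together with the box $\{|u|\le(1+\e)|x|,\ v\ge(1-\e)y\}$. The paper then passes to $\Pi$ via~\eqref{ppp}, lets $\e\downarrow 0$ so that the bound becomes the total mass $1$ of the limiting law of $(X,Y)$, and finishes with $f_t<\e$ eventually. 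You use the limit law only to localise $(X_t,Y_t)$ to a fixed compact window. You then rule out near-ties by counting pairs of sites in the i.i.d.\ Pareto field, a bound that is $O(C(\delta)\eta)$ uniformly in $t$. Your version is more elementary and quantitative, and it avoids having to justify the passage to the Poisson limit for Palm-type events over unbounded sets. The paper's version reuses the computation behind Proposition~\ref{p_new} and identifies the limiting probability exactly.

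Two small repairs are needed.
\begin{itemize}
\item A site with $|Z_t|\le|z|<R_t$ can have $\xi(z)>\xi(Z_t)$. Maximality of $Z_t$ for $\Psi_t$ only gives $\xi(z)/a_t<Y_t+qf_t|X_t|$. So the $y$-gap is at most $\max\{1,q\}f_t/\delta$ rather than $f_t/\delta$. Also $(1-f_t)Y_t$ may fall below $\delta$. Hence $K$ should be enlarged to, say, $\{|x|\le 2/\delta,\ \delta/2\le y\le (1+q)/\delta\}$.
\item Your expected-pairs computation combined with Markov's inequality already suffices. The truncation term $\text{\rm P}(\Pi_{r_t}(K)\ge M)$ is unnecessary.
\end{itemize}

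Finally, your stated reason for discarding the gap approach is wrong. If $|z|<|Z_t|$ and $\xi(z)\ge(1-f_t)\xi(Z_t)$, then
$\Psi_t(z)>\xi(z)-\frac{|Z_t|}{t}q\log t\ge\Psi_t(Z_t)-f_t\xi(Z_t)$.
So such a $z$ is a near-maximiser of $\Psi_t$, not a site with small $\Psi_t$, and a gap argument would detect it.
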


\begin{proof} (1) Consider the event $\mathcal{E}_t^{\ssup 1}$. By Proposition~\ref{p_new}
the pair 
$(X_t, Y_t)$ converges in distribution, which implies that the first two conditions in the definition of $\mathcal{E}_t^{\ssup 1}$ hold with probability tending to $1$. The fact that the limiting random variable has a density further implies the same for the third condition. 
Since $Y>q|X|$ almost surely  the probability of the fourth condition also 
tends to $1$. 
\smallskip

Now consider the event $\mathcal{E}_t^{\ssup 2}$. 
Let $\e>0$ 
and observe, using the same compactness argument as in Proposition~\ref{p_new}, that 
\begin{align*}
\text{\rm P}
\Big(\max_{\substack{|z|<|Z_t|(1+\e)\\ z\neq Z_t}}\frac{\xi(z)}{\xi(Z_t)}< 1-\e\Big)
&=\text{\rm P}\Big(\max_{\substack{|\frac{z}{r_t}|<|\frac{Z_t}{r_t}|(1+\e)\\ z\neq Z_t}}\frac{\xi(z)}{a_t}<  
(1-\e)\frac{\xi(Z_t)}{a_t}\Big)\\
&\ge \int_{\{(x,y): y>q|x|+\e\}} \text{\rm P}\Big(\Pi_{r_t}(dx\times dy)=1, 
\Pi_{r_t}\big(A_{x,y}^{\e}\big)=0
\Big), 
\end{align*}
where 
\begin{align*}
A_{x,y}^{\e}=\big\{ (u,v): v-q|u|>y-q|x|\big\} \cup \big\{ (u,v): |u|\le (1+\e)|x|, v\ge (1-\e)y\big\}.
\end{align*}

Using convergence~\eqref{ppp} we get 
\begin{align*}
\liminf_{t\to\infty}\text{\rm P}
\Big(\max_{\substack{|z|<|Z_t|(1+\e)\\ z\neq Z_t}}\frac{\xi(z)}{\xi(Z_t)}< 1-\e\Big)
&\ge 
\int_{\{(x,y): y>q|x|+\e\}} \Pp\Big(\Pi(dx\times dy)=1, 
\Pi\big(A_{x,y}^{\e}\big)=0
\Big).
\end{align*}
It is easy to see that the sets $A_{x,y}^{\e}$ decrease to the set $\big\{ (u,v): v-q|u|>y-q|x|\big\}$
as $\e\downarrow 0$. Hence due to the monotone convergence we have 
\begin{align}
\lim_{\e\downarrow 0}\ 
\liminf_{t\to\infty}\text{\rm P}
&\Big(\max_{\substack{|z|<|Z_t|(1+\e)\\ z\neq Z_t}}\frac{\xi(z)}{\xi(Z_t)}< 1-\e\Big)\notag\\
&\ge \int_{\{(x,y): y>q|x|\}} \Pp\Big(\Pi(dx\times dy)=1, \Pi\big(\big\{(u,v): v-q|u|>y-q|x|\big\}\big)=0\Big)=1
\label{limlim}
\end{align}
as according to~\eqref{lim} this is just the probability of $Y>q|X|$, which is 
equal to one. It remains to observe that
\begin{align*}
\liminf_{t\to\infty}\text{\rm P}\Big(\max_{\substack{|z|<R_t\\ z\neq Z_t}}\frac{\xi(z)}{\xi(Z_t)}< 1-f_t\Big)
\ge \liminf_{t\to\infty}\text{\rm P}
\Big(\max_{\substack{|z|<|Z_t|(1+\e)\\ z\neq Z_t}}\frac{\xi(z)}{\xi(Z_t)}< 1-\e\Big)
\end{align*}
for any $\e>0$, which together with~\eqref{limlim} implies  that $\text{\rm P}\big(\mathcal{E}_t^{\ssup 2}\big)\to 1$.
\smallskip

(2) This follows immediately from Proposition~\ref{p_new}.   
\end{proof}


\bigskip


\begin{thebibliography}{99}

\bibitem{al} A.\ Auffinger, O.\ Louidor, Directed polymers in a random environment with heavy tails, \emph{Comm.\ Pure Appl.\ Math.} 64, 183--204, 2011

\bibitem{ap} E.\ Archer, A.\ Pein, Parabolic Anderson model on critical Galton--Watson trees in a Pareto environment, \emph{Stochastic Process.\ Appl.} 159, 34--100, 2023

\bibitem{bks} M.\ Biskup, W.\ K\"onig, R.S.\ dos Santos, Mass concentration and aging in the parabolic Anderson model with doubly-exponential tails, \emph{Probab.\ Theory Relat.\ Fields} 171, 251--331, 2018

\bibitem{nikos} P.S.\ Dey and N.\ Zygouras, High temperature limits for $(1+1)$-dimensional 
directed polymer with heavy-tailed disorder, \emph{Ann.~Probab.} {\bf 44}, 4006--4048, 2016


\bibitem{fm} A.\ Fiodorov, S.\ Muirhead, Complete localisation and exponential shape of the parabolic Anderson model with Weibull potential field, \emph{Electron.\ J.\ Probab.} 19 (58), 1--27, 2014


\bibitem{gm1} J.\ G\" artner, S.A.\ Molchanov, Parabolic problems for the Anderson model. I. Intermittency
and related topics, \emph{Comm.\ Math.\ Phys.} 132.3, 613--655, 1990

\bibitem{hms} R.\ v.d.Hofstad, P.\ M\" orters, N.\ Sidorova,
Weak and almost sure limits for the parabolic Anderson model with heavy-tailed potential,
\emph{Ann.\ Appl.\ Probab.}, 18, No.\ 6, 2450--2494, 2008


\bibitem{k} W.\ K\"onig, The parabolic Anderson model: random walk in random potential, \emph{Birkh\"auser}, 2016

\bibitem{klms} W.\ K\" onig, H.\ Lacoin, P.\ M\" orters, N.\ Sidorova, A two cities theorem for the parabolic Anderson model, \emph{Ann.\ Probab.} 37 (1) 347--392, 2009

\bibitem{kms} W.\ K\" onig, P.\ M\" orters, N.\ Sidorova, 
Complete localisation in the parabolic Anderson model with heavy-tailed potential, arXiv:math/0608544, 
2006

\bibitem{mps1} S.\ Muirhead, R.\ Pymar, N.\ Sidorova, Delocalising the parabolic Anderson model 
through partial duplication of the potential, \emph{Probab.\ Theory Relat.\ Fields} 171, 917--979, 2018 

\bibitem{mps2} S.\ Muirhead, R.\ Pymar, N.\ Sidorova, A new phase transition in the parabolic Anderson model with partially duplicated potential, \emph{Stochastic Process.\ Appl.} 129 (11)
4704--4746, 2019

\bibitem{or1} M.\ Ortgiese, M.I.\ Roberts, Intermittency for branching random walk in Pareto environment, \emph{Ann.\ Probab.} 44, 2198--2263, 2016

\bibitem{or2} M.\ Ortgiese, M.I.\ Roberts, One-point localization for branching random walk in Pareto environment, 
\emph{Electron.\ J.\ Probab.} 22, Paper No.\ 6, 1--20, 2017

\bibitem{reznick} S.\ I.\ Resnick, Extreme Values, Regular Variation, and Point Processes, 
\emph{Applied Probability. A Series of the Applied Probability Trust} 4, Springer, New York, 1987

\bibitem{st} N.\ Sidorova, A.\ Twarowski, Localisation and ageing in the parabolic Anderson model with Weibull potential, \emph{Ann.\ Probab.} 42 (4), 1666--1698, 2014

\end{thebibliography}
\end{document}